\documentclass[final,a4paper,11pt]{article}
\usepackage{graphicx}
\usepackage{mathptmx}
\usepackage{amssymb,amsmath, amsfonts,amsthm}
\usepackage{a4wide}
\usepackage{color}

\usepackage{hyperref}

\newcommand{\email}[1]{{\tt #1}}
\newcommand{\R}{\mathbb{R}}
\newcommand{\oR}{\overline{\R}}
\newcommand{\N}{\mathbb{N}}
\newcommand{\norm}[1]{\|#1\|}

\newcommand{\dist}[1]{{\rm dist}(#1)}

\newcommand{\mv}{\,\mid\,}
\newcommand{\bmv}{\,\big\vert\,}
\newcommand{\Bmv}{\,\Big\vert\,}

\newcommand{\A}{{\cal A}}
\newcommand{\B}{{\cal B}}

\newcommand{\M}{{\cal M}}
\newcommand{\New}{{\cal N}}

\newcommand{\Sp}{{\mathcal S}}
\newcommand{\F}{{\cal F}}
\newcommand{\Z}{{\cal Z}}

\newcommand{\longsetto}[1]{\mathop{\longrightarrow}\limits^{#1}}
\newcommand{\skalp}[1]{\langle #1\rangle}

\newcommand{\xb}{\bar x}
\newcommand{\yb}{\bar y}

\newcommand{\lb}{\bar\lambda}

\newcommand{\AT}[2]{{\textstyle{#1\atop#2}}}
\newcommand{\xba}{{\bar x^\ast}}

\newcommand{\oo}{o}
\newcommand{\OO}{{\cal O}}
\newcommand{\argmin}{\mathop{\rm arg\,min}}

\newcommand{\cl}{{\rm cl\,}}

\newcommand{\inn}{{\rm int\,}}
\newcommand{\bd}{{\rm bd\,}}
\newcommand{\co}{{\rm conv\,}}
\newcommand{\gph}{\mathrm{gph}\,}
\newcommand{\dom}{\mathrm{dom}\,}

\newcommand{\tto}{\rightrightarrows}

\newcommand{\myvec}[1]{\begin{pmatrix}#1\end{pmatrix}}
\newcommand{\SCD}{SCD\ }

\newcommand{\scdreg}{{\rm scd\,reg\;}}
\newcommand{\reg}{{\rm reg\,}}
\newcommand{\ssstar}{semismooth$^{*}$ }
\newcommand{\ee}[2]{{#1}^{(#2)}}
\newcommand{\ZnP}{\Z_n^{P,W}}

\newcommand{\phiFB}[1]{\varphi_{\ee \lambda{#1}}^{\rm FB}(\ee x{#1})}
\newcommand{\psiFBp}[2]{\psi_{\ee \lambda{#1}}^{\rm FB}\big(\ee x{#1},#2\big)}

\newcommand{\rge}{{\rm rge\;}}

\newlength{\myAlgBox}
\setlength{\myAlgBox}{\textwidth}
\addtolength{\myAlgBox}{-\parindent}

\newtheorem{theorem}{Theorem}[section]
\newtheorem{proposition}[theorem]{Proposition}
\newtheorem{remark}[theorem]{Remark}
\newtheorem{lemma}[theorem]{Lemma}
\newtheorem{corollary}[theorem]{Corollary}
\newtheorem{definition}[theorem]{Definition}
\newtheorem{example}[theorem]{Example}
\newtheorem{algorithm}{Algorithm}
\newtheorem{assumption}{Assumption}

\title{On a globally convergent semismooth$^{*}$ Newton method in nonsmooth nonconvex optimization}
\author{Helmut Gfrerer\thanks{Institute of Computational Mathematics, Johannes Kepler University
Linz and Johann Radon Institute for Computational and Applied Mathematics (RICAM), Austrian Academy of Sciences, A-4040 Linz, Austria; \email{helmut.gfrerer@ricam.oeaw.ac.at}; ORCID 0000-0001-6860-102X}
\if{ \and   Ji\v{r}\'{i} V. Outrata\thanks{Institute of Information Theory and Automation, Czech Academy of
 Sciences, 18208 Prague, Czech Republic, and Centre for
              Informatics and Applied Optimization, Federation University of Australia, POB 663,
              Ballarat,  Vic 3350, Australia,  \email{outrata@utia.cas.cz}}}\fi
}
\date{}

\begin{document}
\maketitle

{\footnotesize
{\bf Abstract.} In this paper we present GSSN, a globalized SCD semismooth$^{*}$ Newton method for solving  nonsmooth nonconvex optimization problems. The global convergence properties of the method are ensured by the proximal gradient method, whereas locally superlinear convergence is established  via the SCD semismooth$^{*}$ Newton method under quite weak assumptions. The Newton direction is based on the SC (subspace containing) derivative of the subdifferential mapping and can be computed  by the (approximate) solution  of an equality-constrained quadratic program. Special attention is given to the efficient numerical implementation of the overall method.
\\
{\bf Key words.} Nonsmooth optimization, nonconvex, semismooth$^{*}$ Newton method, proximal gradient method, global convergence, superlinear convergence
\\
{\bf AMS Subject classification.} 90C26, 65K05, 90C06. }

\section{Introduction}
This paper deals with nonsmooth and nonconvex optimization problems of the form
\begin{equation}\label{EqOptProbl}
  \min \varphi(x):=f(x)+g(x),
\end{equation}
where $f:\R^n\to\R$ is a continuously differentiable function with strictly continuous gradient $\nabla f$ and $g:\R^n\to\oR:=\R\cup\{\infty\}$ is a proper lsc prox-bounded function. Throughout the paper we will assume these properties on $f$ and $g$ without further mention.

For solving such problems, first-order methods are very popular, because they are easy to implement, have low computational cost per iterate and global convergence properties can be shown under very mild assumptions. However, the rate of convergence is at most linear and is very sensitive to some condition number of the problem. Thus, in some cases first-order methods can produce only solutions with low or medium accuracy in reasonable time.

On the other hand, second-order methods are locally superlinearly convergent under suitable smoothness and regularity assumptions at the solution. Having at hand a starting point sufficiently close to the solution, these second-order methods produce in few iterations a solution with high accuracy. However, the computational cost per iterate is  higher and it may be a difficult task to globalize the second-order methods.

Such a globalization can be done by a hybrid of first- and second-order methods. In a series of recent papers \cite{StThPa17,ThAhPa19,ThStPa18,AhThPa21} this was done by using {\em forward-backward splitting (FBS)}  (also called {\em proximal gradient method} (PGM)) as the first-order method. One step of FBS yields a sufficient decrease in the forward-backward envelope (FBE), which can be used to make a line search along a direction obtained by the second-order method. In the mentioned papers, this is done by using a quasi-Newton method for computing a fixed point of the forward-backward operator. This quasi-Newton technique avoids the computation of derivatives for the forward-backward operator. A crucial assumption for guaranteeing superlinear convergence is that the forward-backward operator is strictly differentiable at the solution.

Another approach uses the fact that under suitable assumptions on $g$ (convexity or uniform prox-regularity) the FBE is continuously differentiable and  a descent method combined with some semi\-smooth Newton method can be applied in order to minimize the FBE, cf. \cite{PaBe13, ThAhPa19,KhMoPhTr22b}. Here, the difficult part is the computation of one element of the Clarke generalized derivative of the forward-backward operator and the limiting coderivative of the subdifferential $\partial g$, respectively.

In the  Truncated Nonsmooth Newton Multigrid (TNNMG) method \cite{GraSa19}, global convergence is ensured by the convergence properties of a nonlinear pre-smoother, whereas the subsequent linear correction step is crucial for fast convergence of the method.

Concerning second-order methods, a lot of active investigations have been done in direction of proximal Newton methods, see, e.g., \cite{KaLe21, LeSuSa14, MoYuZeZh22} and \cite{PoeSchiJa22, PoeSchiJa23} for the infinite dimensional case. Here, in each iteration one has to solve a nonsmooth problem of the form
\[\min_d f(\ee xk)+\skalp{\nabla f(\ee xk),d}+\frac 12 d^T\ee Hk d +g(\ee xk +d),\]
where $\ee Hk$ is some approximation of the Hessian $\nabla^2 f(\ee xk)$.

Other second-order methods rely on the observation, that in many cases $\gph \partial \varphi$, the graph of the subdifferential mapping, is a smooth manifold locally near the solution, see, e.g., \cite{BaIuMa22, LeWy20}. To this class  also belongs the theory of $\cal VU$ decomposition as introduced in \cite{LeSa97}. We refer the reader also to the survey \cite{Sa18} and the references therein.

In this paper we want to globalize the \SCD semismooth$^{*}$ Newton method for solving the first-order optimality conditions $0\in\partial \varphi(x)$. The semismooth$^{*}$ Newton method aims at solving inclusions $0\in F(x)$ with set-valued  mappings $F:\R^n\tto\R^n$. It was introduced in \cite{GfrOut21} and is mainly motivated by the semismooth$^{*}$ property which was defined by means of limiting coderivatives. The semismooth$^{*}$ property can be considered as a generalization of the classical semismooth property for single-valued functions \cite{QiSun93}. Let us mention that the class of mappings having the semismoothness$^{*}$ property  is rather broad. As it follows from a result by Jourani \cite{Jou07}, every mapping, whose graph is a closed subanalytic set, is semismooth$^{*}$.

In the very recent paper \cite{GfrOut22a} the \SCD variant (Subspace Containing Derivative) of the \ssstar Newton method has been developed which avoids the use of limiting coderivatives and the SC derivative is comparatively easy to compute. As the name already indicates, the elements of the SC derivative are subspaces. This seems to be quite natural, since for single-valued smooth mappings the Fr\'echet derivative is a linear mapping, whose graph is a subspace. In fact, for single-valued Lipschitzian mappings $F$ the elements of the SC derivative are exactly the graphs of the linear mappings induced by the elements of the so-called B-Jacobian. However, the SC derivative of a set-valued mapping may contain subspaces which are not the graph of a linear mapping. Using the SC derivative, we can then define  the \SCD semismoothness$^{*}$ property which is weaker than the above mentioned one of semismoothness$^{*}$.

The \SCD semismooth$^{*}$ Newton direction is  calculated as the solution of a linear system which is formed by means of an arbitrary basis of a subspace belonging to the SC derivative. Note that we need not to know the whole SC derivative but only one element for computing the Newton direction.

In order that the SC derivative of $\partial \varphi$ exists we have to impose a very mild assumption on $g$, namely that $g$ is prox-regular and subdifferentially continuous at $x$ for $x^*$ for $(x,x^*)$ belonging to a dense subset of $\gph \partial g$. Although there exist examples of proper lsc functions which does not fulfill this assumption, we claim that nearly all functions used in applications have this property.

To ensure superlinear convergence of the \SCD semismooth$^{*}$ Newton method, we have to impose the semismooth$^{*}$ property together with a certain regularity condition on the subdifferential $\partial \varphi$ at the solution. This regularity condition is called \SCD regularity and is weaker than metric regularity. On the other hand, together with the semismooth$^{*}$ property, \SCD regularity implies strong metric subregularity. \SCD regularity also guarantees the unique solvability of the linear system defining the Newton direction. Let us mention that the assumed semismooth${}^*$ property is weaker than the smoothness assumptions used in other papers for ensuring superlinear convergence like strict differentiability of the forward-backward operator \cite{ThStPa18} or the graph of the subdifferential mapping being a smooth manifold around the solution \cite{BaIuMa22,LeWy20}. The \SCD semismooth$^{*}$ Newton method has been already successfully applied to some challenging applications \cite{GfrManOutVal22, GfrOutVal22b}.

The aim of this paper is to globalize the SCD semismooth$^{*}$ Newton method for solving the inclusion $0\in\partial \varphi(x)$ and the obtained algorithm we will call GSSN (Globalized SCD semismooth$^{*}$ Newton method). For globalization we use a hybrid with FBS. In particular, our algorithm is a modification of PANOC${}^+$ developed by De~Marchi and Themelis \cite{MaTh22} and does not require that $\nabla f$ is globally Lipschitz. We are able to show that every accumulation point of the produced sequence is a fixed point of the forward-backward operator and therefore stationary for $\varphi$. Further, if one iterate is sufficiently close to a local minimizer where $\partial\varphi$ is both \SCD semismooth$^{*}$ and \SCD regular, then the whole sequence converges superlinearly to that minimizer.

For the purpose of solving large scale problems we also treat the situation that the linear system defining the Newton direction is only  solved approximately by some iterative method. We will show that the \SCD Newton direction is a stationary solution of a quadratic function and therefore some CG method can be applied.

The paper is organized in the following way. In the preliminary Section 2 we recall first some basic notions from variational analysis followed by some properties of the forward-backward envelope. In addition we recall some basic results on SCD mappings and the SCD semismooth$^{*}$ Newton method for solving inclusions $0\in F(x)$. In Section 3 we analyze the SCD semismooth$^{*}$ Newton method more in detail for the case when $F$ is the subdifferential $\partial \varphi$ of an lsc function $\varphi$. It appears that the subspaces of the SC derivative of $\partial\varphi$ have a certain basis representation which can be efficiently exploited for computation of the Newton direction. In Section 4 we present a globally convergent algorithm which acts as a template for GSSN. In Section 5 we establish superlinear convergence results. The analysis is also done for large-scale problems when the Newton direction cannot be computed exactly but is only approximated by some iterative method. Finally, in Section 6 we present some numerical experiments for large-scale problems which demonstrate the effectiveness of our method.

Our notation is basically standard. For an element $u \in \R^n$, $\norm{u}$ denotes its Euclidean norm,
$\B_\epsilon(u)$ denotes the closed ball around $u$ with radius $\epsilon$ and $\B$ stands for the closed unit ball.  For a matrix $A$, $\rge A$ signifies its range and $\norm{A}$ denotes its spectral norm.  Given a set $\Omega \subset \R^s$, we define the distance
from a point $x$ to $\Omega$ by
$\dist{x, \Omega} := \inf\{\norm{y-x}\mv y\in\Omega\}$, the respective indicator function is denoted by
$\delta_{\Omega}$ and $\stackrel{\Omega}{x \rightarrow \bar{x}}$ means convergence within $\Omega$.
When a mapping $F : \R^n \rightarrow \R^m$ is differentiable at $x$, we denote by $\nabla F(x)$ its Jacobian. We denote by $\R_+$ ($\R_{++}$) the set of all nonnegative (positive) real numbers and by $\R_-$ ($\R_{--}$) the set of all nonpositive (negative) real numbers.

\section{\label{SecPrel}Preliminaries}
\subsection{Variational analysis}
Given a set $A\subset\R^n$ and a point $\xb\in A$, the {\em tangent cone} to $A$ at $\xb$ is defined as
\[T_A(\xb):=\limsup_{t\downarrow 0}\frac {A-\xb}t.\]
We call $A$ {\em geometrically derivable} at $\xb$, if $T_A(\xb)=\lim_{t\downarrow 0}(A-\xb)/t$, i.e., for every $u\in T_A(\xb)$ and every sequence $t_k\downarrow 0$ there exits a sequence $u_k\to u$ such that $\xb+t_ku_k\in A$.

Consider now a function $q:\R^n\to\oR$ and a point $\xb\in \dom q:=\{x\in\R^n\mv q(x)<\infty\}$.
The {\em regular subdifferential} of $q$ at $\xb$ is given by
\[\widehat\partial q(\xb):=\Big\{x^*\in\R^n\mv\liminf_{x\to\xb}\frac{q(x)-q(\xb)-\skalp{x^*,x-\xb}}{\norm{x-\xb}}\geq 0\Big\}\]
while the {\em (limiting) subdifferential} is defined by
\[\partial q(\xb):=\{x^*\mv \exists (x_k, x_k^*)\to(\xb,x^*) \mbox{ with }x_k^*\in\widehat \partial q(x_k)\ \forall k\mbox{ and }\lim_{k\to\infty} q(x_k)=q(\xb)\}.\]
Given $\xba\in\partial q(\xb)$, the function $q$ is called {\em prox-regular} at $\xb$ for $\xba$ if there
exist $\epsilon> 0$ and $\rho\geq 0$ such that for all $x',x\in \B_{\epsilon}(\xb)$ with $\vert q(x)-q(\xb)\vert\leq \epsilon$ one has
\[q(x')\geq q(x)+\skalp{x^*,x'-x}-\frac \rho2 \norm{x'-x}^2\quad\mbox{whenever}\quad x^*\in \partial q(x)\cap \B_\epsilon(\xba).\]
When this holds for all $\xba \in \partial q(\xb)$, $q$ is said to be prox-regular at $\xb$. Further,  $q$ is called {\em subdifferentially continuous} at $\xb$ for $\xba$ if for any sequence $(x_k,x_k^*)\longsetto{{\gph \partial q}}(\xb,\xba)$ we have $\lim_{k\to\infty}q(x_k)=q(\xb)$. When this holds for all $\xba \in \partial q(\xb)$, $q$ is said to be subdifferentially continuous  at $\xb$.

The function $q$ is said to be {\em prox-bounded} if there exists some $\lambda>0$ such that $\inf_z \frac1{2\lambda}\norm{z-x}^2+q(z)>-\infty$ for some $x\in\R^n$. The supremum of the set of all such $\lambda$ is the threshold $\lambda_q$ for the prox-boundedness of $q$.
Further, the {\em proximal mapping} ${\rm prox}_{\lambda q}:\R^n\tto\R^n$ is defined by
\[{\rm prox}_{\lambda q}(x):=\argmin_z \frac1{2\lambda}\norm{z-x}^2+q(z).\]

Given a mapping $F:D\to\R^m$ with $D\subset\R^n$, we say that $F$ is {\em strictly continuous} at $\xb\in D$ if
\[{\rm Lip\,} F(\xb):=\limsup_{\AT{x,x'\longsetto{D}\xb}{x\not=x'}}\frac{\norm{F(x)-F(x')}}{\norm{x-x'}}<\infty.\]
The mapping $F$ is called strictly continuous if it is strictly continuous at every point $x\in D$.

\subsection{On the forward-backward envelope}

Consider the problem \eqref{EqOptProbl}. Given $\lambda>0$, we define the function $\psi_\lambda^{\rm FB}:\R^n\times\R^n\to \oR$ by
\[\psi_\lambda^{\rm FB}(x,z):=\ell_f(x,z)+\frac1{2\lambda}\norm{z-x}^2+g(z)\mbox{  with } \ell_f(x,z):=f(x)+\skalp{\nabla f(x),z-x}.\]
Further we define the mapping $T_\lambda:\R^n\tto\R^n$ and the function $\varphi_\lambda^{\rm FB}: \R^n\to\R$ by
\begin{align*}T_\lambda(x)&:=\argmin_z\psi_\lambda^{\rm FB}(x,z),\\
\varphi_\lambda^{\rm FB}(x)&:=\min_z \psi_\lambda^{\rm FB}(x,z).
\end{align*}
Clearly, for any $z\in T_\lambda(x)$ there holds the first-order optimality condition
\begin{equation}\label{EqFO}
  0\in\widehat\partial_z \psi_\lambda^{\rm FB}(x,z)=\nabla f(x)+\frac 1\lambda(z-x)+\widehat\partial g(z).
\end{equation}
In particular, whenever $x\in T_\lambda(x)$, we have $0\in\nabla f(x)+\widehat\partial g(x)=\widehat\partial\varphi(x)$, i.e., $x$ is a stationary point for $\varphi$.

The function $\varphi_\lambda^{\rm FB}$ is the so-called {\em forward-backward envelope}, which has been introduced in \cite{PaBe13} under the name {\em composite Moreau envelope}, and has been successfully used in a series of papers for the solution of nonsmooth optimization problems, cf. \cite{StThPa17, ThStPa18, StThPa19}.

In what follows we  will also  use of the well-known descent lemma:
\begin{lemma}[{see, e.g., \cite[Lemma 5.7]{Be17}}]\label{LemDesc}Let $\nabla f$ be Lipschitz continuous with constant $L$ on the convex set $D$. Then for any $x,y\in D$ one has
\[f(y)\leq \ell_f(x,y)+\frac L2\norm{y-x}^2.\]
\end{lemma}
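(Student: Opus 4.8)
The plan is to reduce the claim to a one-dimensional statement along the segment joining $x$ and $y$, which is contained in $D$ by convexity, and then to combine the integral form of the fundamental theorem of calculus with the Lipschitz bound on $\nabla f$. Since $f$ is continuously differentiable, the scalar function $t\mapsto f(x+t(y-x))$ is continuously differentiable on $[0,1]$ with derivative $\skalp{\nabla f(x+t(y-x)),y-x}$, so I would begin by writing
\[f(y)-f(x)=\int_0^1\skalp{\nabla f(x+t(y-x)),y-x}\,dt.\]

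Next I would isolate the remainder by subtracting the linear term. Because $\skalp{\nabla f(x),y-x}=\int_0^1\skalp{\nabla f(x),y-x}\,dt$, the difference $f(y)-\ell_f(x,y)$ can be rewritten as
\[\int_0^1\skalp{\nabla f(x+t(y-x))-\nabla f(x),\,y-x}\,dt.\]

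The estimation step is then routine. By the Cauchy--Schwarz inequality the integrand is bounded by $\norm{\nabla f(x+t(y-x))-\nabla f(x)}\,\norm{y-x}$, and, since $x+t(y-x)\in D$ for every $t\in[0,1]$, the Lipschitz property of $\nabla f$ on $D$ gives $\norm{\nabla f(x+t(y-x))-\nabla f(x)}\leq L\,t\,\norm{y-x}$. Integrating $L\,t\,\norm{y-x}^2$ over $[0,1]$ produces the factor $\tfrac12$ and yields the claimed inequality $f(y)\leq\ell_f(x,y)+\frac L2\norm{y-x}^2$.

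I do not anticipate a genuine obstacle here: the two points requiring attention are that the whole segment lies in $D$ (immediate from convexity, which is exactly what makes the Lipschitz estimate applicable along the entire path) and the validity of the fundamental theorem of calculus, which is guaranteed by the standing assumption that $f$ is continuously differentiable. An alternative, equivalent route would define $h(t):=f(x+t(y-x))-\ell_f(x,x+t(y-x))$, observe $h(0)=0$ and $h'(t)=\skalp{\nabla f(x+t(y-x))-\nabla f(x),y-x}\leq L\,t\,\norm{y-x}^2$, and integrate; this avoids writing the two integrals separately but relies on the same ingredients.
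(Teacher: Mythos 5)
Your proof is correct and is the standard argument; the paper itself does not prove this lemma but cites \cite[Lemma 5.7]{Be17}, whose proof follows exactly the same route (fundamental theorem of calculus along the segment, Cauchy--Schwarz, and the Lipschitz bound yielding the factor $\tfrac12$ upon integrating $Lt$). Note also that your computation actually gives the two-sided bound $\vert f(y)-\ell_f(x,y)\vert\leq\frac L2\norm{y-x}^2$, which the paper explicitly remarks upon after the lemma.
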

Note that by the proof of \cite[Lemma 5.7]{Be17} we even have $\vert f(y)- \ell_f(x,y)\vert \leq \frac L2\norm{y-x}^2$, $x,y\in D$.

Let us state some basic properties of $T_\lambda$ and $\varphi_\lambda^{\rm FB}$. First at all note that
\[T_\lambda(x)={\rm prox}_{\lambda g}(x-\lambda\nabla f(x)).\]
\begin{lemma}[{cf. \cite[Lemma 2.2]{MaTh22}}]\label{LemBasFBE1}Let $\lambda\in(0,\lambda_g)$, where $\lambda_g$ denotes the threshold of prox-boundedness of $g$.\begin{enumerate}
  \item[(i)] $\varphi_\lambda^{\rm FB}$ is everywhere finite and strictly continuous.
  \item[(ii)] For every $x\in\R^n$ one has $\varphi_\lambda^{\rm FB}(x)\leq \varphi(x)$, where equality holds if and only if $x\in T_\lambda(x)$.
  \item[(iii)] If $\tilde x\in T_\lambda(x)$ and $f(\tilde x)\leq \ell(x,\tilde x)+\frac L2\norm{\tilde x-x}^2$ then
  \begin{align*}
    \varphi(\tilde x)\leq \varphi_\lambda^{\rm FB}(x)-\frac{1-\lambda L}{2\lambda}\norm{\tilde x-x}^2.
  \end{align*}
\end{enumerate}
\end{lemma}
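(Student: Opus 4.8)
The plan is to treat the three assertions in turn, with essentially all of the work concentrated in~(i); parts (ii) and (iii) are short consequences of the definitions.

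For (i), I would first complete the square in the variable $z$ inside $\psi_\lambda^{\rm FB}$. Writing $u:=x-\lambda\nabla f(x)$, a short computation gives
\[\psi_\lambda^{\rm FB}(x,z)=f(x)-\frac\lambda2\norm{\nabla f(x)}^2+\frac1{2\lambda}\norm{z-u}^2+g(z),\]
so that, introducing the Moreau envelope $e_{\lambda g}(u):=\min_z\big(\frac1{2\lambda}\norm{z-u}^2+g(z)\big)$,
\[\varphi_\lambda^{\rm FB}(x)=f(x)-\frac\lambda2\norm{\nabla f(x)}^2+e_{\lambda g}(x-\lambda\nabla f(x)),\qquad T_\lambda(x)={\rm prox}_{\lambda g}(x-\lambda\nabla f(x)).\]
Since $\lambda\in(0,\lambda_g)$, standard properties of prox-bounded functions guarantee that $e_{\lambda g}$ is finite everywhere and that ${\rm prox}_{\lambda g}$ is nonempty-valued and locally bounded, which yields finiteness of $\varphi_\lambda^{\rm FB}$ immediately. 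For strict continuity I would then show $e_{\lambda g}$ is locally Lipschitz by a two-point comparison: on a bounded neighborhood, picking $z_{u'}\in{\rm prox}_{\lambda g}(u')$ and using its suboptimality for $e_{\lambda g}(u)$ together with its optimality for $e_{\lambda g}(u')$ gives
\[e_{\lambda g}(u)-e_{\lambda g}(u')\le\frac1{2\lambda}\big(\norm{z_{u'}-u}^2-\norm{z_{u'}-u'}^2\big)=\frac1{2\lambda}\skalp{u'-u,2z_{u'}-u-u'},\]
and local boundedness of ${\rm prox}_{\lambda g}$ bounds the last factor, so symmetrizing in $u,u'$ produces a local Lipschitz estimate. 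Finally, since $x\mapsto x-\lambda\nabla f(x)$ and $x\mapsto f(x)-\frac\lambda2\norm{\nabla f(x)}^2$ are strictly continuous (because $\nabla f$ is), the displayed formula exhibits $\varphi_\lambda^{\rm FB}$ as a sum and composition of strictly continuous maps, hence strictly continuous.

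Part (ii) is immediate from the definition. Evaluating the minimand at $z=x$ gives $\psi_\lambda^{\rm FB}(x,x)=f(x)+g(x)=\varphi(x)$, whence $\varphi_\lambda^{\rm FB}(x)=\min_z\psi_\lambda^{\rm FB}(x,z)\le\varphi(x)$. For the equality case, $\varphi_\lambda^{\rm FB}(x)=\varphi(x)$ means precisely that $z=x$ attains the minimum defining $\varphi_\lambda^{\rm FB}(x)$, i.e. $x\in\argmin_z\psi_\lambda^{\rm FB}(x,z)=T_\lambda(x)$; conversely $x\in T_\lambda(x)$ forces $\varphi_\lambda^{\rm FB}(x)=\psi_\lambda^{\rm FB}(x,x)=\varphi(x)$. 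When $x\notin\dom g$ both sides of the equivalence fail, consistently with the finiteness established in (i).

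For (iii) I would chain the hypotheses directly. Starting from $\varphi(\tilde x)=f(\tilde x)+g(\tilde x)$, the assumed descent bound $f(\tilde x)\le\ell_f(x,\tilde x)+\frac L2\norm{\tilde x-x}^2$ together with the identity $\ell_f(x,\tilde x)+g(\tilde x)=\psi_\lambda^{\rm FB}(x,\tilde x)-\frac1{2\lambda}\norm{\tilde x-x}^2$ gives
\[\varphi(\tilde x)\le\psi_\lambda^{\rm FB}(x,\tilde x)-\frac1{2\lambda}\norm{\tilde x-x}^2+\frac L2\norm{\tilde x-x}^2.\]
Since $\tilde x\in T_\lambda(x)$ we have $\psi_\lambda^{\rm FB}(x,\tilde x)=\varphi_\lambda^{\rm FB}(x)$, and collecting the quadratic terms via $\frac L2-\frac1{2\lambda}=-\frac{1-\lambda L}{2\lambda}$ yields the claim. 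The only genuine difficulty is the local Lipschitz continuity of the Moreau envelope in (i); once that classical fact is in place the remaining parts are purely algebraic.
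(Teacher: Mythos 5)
Your proof is correct. Note that the paper itself gives no argument for this lemma: it is imported verbatim from De~Marchi and Themelis \cite[Lemma 2.2]{MaTh22}, so there is no in-paper proof to compare against. Your route — rewriting $\varphi_\lambda^{\rm FB}(x)=f(x)-\tfrac\lambda2\norm{\nabla f(x)}^2+e_{\lambda g}(x-\lambda\nabla f(x))$ by completing the square and then reducing (i) to finiteness and local Lipschitz continuity of the Moreau envelope below the prox-boundedness threshold — is the standard one and matches the argument in the cited source. The completed-square identity, the two-point comparison yielding the local Lipschitz bound for $e_{\lambda g}$ (which relies on local boundedness of ${\rm prox}_{\lambda g}$ for $\lambda<\lambda_g$, i.e.\ \cite[Theorem 1.25]{RoWe98}; the paper's own Lemma \ref{LemBasicFBE}(ii) independently establishes the analogous boundedness for $T_\lambda$), and the purely algebraic verifications of (ii) and (iii), including the correct handling of the case $x\notin\dom g$ in (ii), all check out.
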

\begin{lemma}\label{LemBasicFBE}
  \begin{enumerate}
    \item[(i)] For every bounded set $M$ and every $\alpha>0$ there exists some $\hat\lambda_{M,\alpha}>0$ such that for every  $\lambda\in(0,\hat\lambda_{M,\alpha}]$ and every pair $(x,z)\in M\times M$ one has
    \[f(z)\leq \ell_f(x,z)+\frac{\alpha}{2\lambda}\norm{z-x}^2.\]
    \item[(ii)] For every bounded set $\bar M$ and every $\bar\lambda\in(0,\lambda_g)$ the set
    \begin{equation}\label{EqS} S:=\bigcup_{\AT{x\in \bar M}{\lambda\in(0,\bar\lambda]}}T_\lambda(x)\end{equation}
    is bounded.
    \item[(iii)]If $\lambda_1<\lambda_2$ then for all $x\in\R^n$ there holds $\varphi^{\rm FB}_{\lambda_2}(x)\leq \varphi^{\rm FB}_{\lambda_1}(x)$.
  \end{enumerate}
\end{lemma}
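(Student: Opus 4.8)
Since the three assertions are essentially independent and of increasing difficulty, I would treat (iii) and (i) first and reserve most of the effort for (ii). For (iii) the cleanest route is pointwise domination of the integrand: for fixed $x$ and $z$ the quantity $\frac1{2\lambda}\norm{z-x}^2$ is nonincreasing in $\lambda$, while the remaining part $\ell_f(x,z)+g(z)$ of $\psi_\lambda^{\rm FB}(x,z)$ does not depend on $\lambda$; hence $\lambda_1<\lambda_2$ gives $\psi_{\lambda_2}^{\rm FB}(x,z)\le\psi_{\lambda_1}^{\rm FB}(x,z)$ for every $z$, and taking the infimum over $z$ yields $\varphi_{\lambda_2}^{\rm FB}(x)\le\varphi_{\lambda_1}^{\rm FB}(x)$ directly in $\oR$, without any appeal to the existence of minimizers. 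For (i) I would reduce to the descent Lemma \ref{LemDesc}. Since $\nabla f$ is strictly continuous it is Lipschitz on every compact set, so picking a closed ball $D$ (hence convex) containing the bounded set $M$ furnishes a modulus $L\ge0$ with $\nabla f$ Lipschitz on $D$. Lemma \ref{LemDesc} then gives $f(z)\le\ell_f(x,z)+\frac L2\norm{z-x}^2$ for all $x,z\in D\supseteq M$, and it suffices to set $\hat\lambda_{M,\alpha}:=\alpha/L$ (any positive number if $L=0$), so that $\lambda\le\hat\lambda_{M,\alpha}$ forces $\frac L2\le\frac{\alpha}{2\lambda}$ and the claim follows.

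The substantial assertion is (ii). I would start from the identity $T_\lambda(x)={\rm prox}_{\lambda g}(y)$ with $y:=x-\lambda\nabla f(x)$, stated just before Lemma \ref{LemBasFBE1}, and note that as $x$ runs through $\bar M$ and $\lambda$ through $(0,\bar\lambda]$ the point $y$ remains in a bounded set, because $\nabla f$ is bounded on the bounded set $\bar M$. To control $g$ from below I would invoke prox-boundedness: for any $\mu\in(\bar\lambda,\lambda_g)$ the quantity $\beta:=\inf_w\big(\frac1{2\mu}\norm{w}^2+g(w)\big)$ is finite, so that $g(w)\ge\beta-\frac1{2\mu}\norm{w}^2$ for all $w$. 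Testing the minimizing property of $z\in T_\lambda(x)={\rm prox}_{\lambda g}(y)$ against a fixed $\bar z\in\dom g$ gives
\[\frac1{2\lambda}\norm{z-y}^2+g(z)\le\frac1{2\lambda}\norm{\bar z-y}^2+g(\bar z),\]
and substituting the minorant for $g(z)$ produces an inequality that is coercive in $\norm{z}$.

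I would finish by contradiction. If $S$ were unbounded there would exist $x_k\in\bar M$, $\lambda_k\in(0,\bar\lambda]$ and $z_k\in T_{\lambda_k}(x_k)$ with $\norm{z_k}\to\infty$; the corresponding $y_k$ stay bounded, whence $\norm{z_k-y_k}/\norm{z_k}\to1$. The main obstacle is that the test term $\frac1{2\lambda}\norm{\bar z-y}^2$ blows up as $\lambda\downarrow0$, so a single fixed comparison point $\bar z$ cannot yield a uniform bound by a crude estimate. The remedy is to absorb all bounded quantities (notably $\norm{\bar z-y_k}^2$) into the divergent term $\norm{z_k}^2$ before estimating any coefficients — legitimate precisely because $\norm{z_k}\to\infty$ — and only then to bound the coefficient of $\norm{z_k}^2$ from below by means of $\lambda_k\le\bar\lambda<\mu$. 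Concretely, for large $k$ the tested inequality collapses to $\big(\frac{1-\delta}{2\lambda_k}-\frac1{2\mu}\big)\norm{z_k}^2\le C$ for a small fixed $\delta>0$ and a constant $C$, whose coefficient is at least $\frac{1-\delta}{2\bar\lambda}-\frac1{2\mu}>0$ once $\delta$ is chosen small (possible since $\bar\lambda<\mu$). This contradicts $\norm{z_k}\to\infty$. I would stress that it is essential to carry the near-unit factor $1-\delta$, rather than a crude constant such as $\frac12$ coming from $\norm{z_k-y_k}^2\ge\frac12\norm{z_k}^2-\norm{y_k}^2$, since only $\bar\lambda<\mu$ is available and a factor $\frac12$ would spuriously require $2\bar\lambda<\mu$.
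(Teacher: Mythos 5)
Your proposal is correct, and for parts (i) and (iii) it coincides with the paper's argument: (i) is the descent lemma on a convex compact set containing $M$ (the paper uses $\cl\co M$, you use a ball) with $\hat\lambda_{M,\alpha}=\alpha/L$, and (iii) is the pointwise monotonicity of $\psi_\lambda^{\rm FB}(x,\cdot)$ in $\lambda$. For part (ii) your scheme is also the paper's — contradiction, testing the minimality of $z_k$ against a fixed $\tilde z\in\dom g$, and absorbing all bounded and linearly growing terms into a small multiple of $\norm{z_k}^2$ so that a coefficient of the form $\frac{1-\delta}{2\bar\lambda}$ survives — but you close the argument differently: you take $\mu\in(\bar\lambda,\lambda_g)$ and use the finiteness of $\inf_w\big(\frac1{2\mu}\norm{w}^2+g(w)\big)$ to get the quadratic minorant $g\geq\beta-\frac1{2\mu}\norm{\cdot}^2$ and a direct numerical contradiction, whereas the paper instead drives the inequality to $\liminf_k g(z_k)/\norm{z_k}^2<-1/(2\lambda_g)$ and contradicts the characterization of the prox-boundedness threshold in \cite[Exercise 1.24]{RoWe98}. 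The two devices are equivalent formulations of prox-boundedness; yours is marginally more self-contained (it uses only the definition given in Section 2.1, modulo the standard observation that finiteness of the Moreau envelope at one base point for some $\lambda'\in(\mu,\lambda_g)$ implies finiteness of $\inf_w\frac1{2\mu}\norm{w}^2+g(w)$), while the paper's is shorter given the citation. Your closing remark about needing the factor $1-\delta$ rather than a crude $\frac12$ is exactly the point that makes the argument work for all $\bar\lambda<\lambda_g$.
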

\begin{proof}
  ad (i): Since $\nabla f$ is assumed to be strictly continuous, $\nabla f$ is Lipschitz continuous on the convex compact set $\cl\co M$ with some constant $L$, cf. \cite[Theorem 9.2]{RoWe98}. By Lemma \ref{LemDesc}  there holds $f(z)\leq \ell(x,z)+\frac L2\norm{z-x}^2$ for all $x,z\in \cl\co M$ and the assertion follows with $\hat\lambda_{M,\alpha} = \alpha/L$.

  ad (ii): Assume on the contrary that the set $S$ is unbounded, i.e., there exist sequences $(x_k,\lambda_k)\in \bar M\times (0,\bar\lambda]$ and $z_k\in T_{\lambda_k}(x_k)$ with $\norm{z_k}\to \infty$ as $k\to\infty$. Taking any $\tilde z\in \dom g$ we have
  \[f(x_k)+\skalp{\nabla f(x_k),z_k-x_k}+\frac 1{2\lambda_k}\norm{z_k-x_k}^2+g(z_k)\leq f(x_k)+\skalp{\nabla f(x_k),\tilde z-x_k}+\frac 1{2\lambda_k}\norm{\tilde z-x_k}^2+g(\tilde z).\]
  A rearranging and the Cauchy-Schwarz inequality yield
  \begin{align*}g(\tilde z)&\geq \skalp{\nabla f(x_k)-\frac {x_k}{\lambda_k},z_k-\tilde z}+\frac 1{2\lambda_k}(\norm{z_k}^2-\norm{\tilde z}^2)+g(z_k)\\
  &\geq -\frac1{\lambda_k}\norm{\lambda_k\nabla f(x_k)-x_k}\norm{z_k-\tilde z}+\frac 1{2\lambda_k}(\norm{z_k}^2-\norm{\tilde z}^2)+g(z_k).\end{align*}
  Since the sequences $x_k$ and $\lambda_k$ are bounded and $\nabla f$ is continuous, the sequence $\norm{\lambda_k\nabla f(x_k)-x_k}$ is bounded as well. By taking into account $\lim_{k\to\infty}\norm{z_k}=\infty$, for arbitrary $\epsilon>0$ we can find some $k_\epsilon$ such that for any $k\geq k_\epsilon$ one has
  \[\norm{\lambda_k\nabla f(x_k)-x_k}\norm{z_k-\tilde z}\leq \frac\epsilon2\norm{z_k}^2\mbox{ and } \norm{\tilde z}^2\leq\epsilon\norm{z_k}^2,\]
  resulting in
  \[g(\tilde z)\geq \frac{1-2\epsilon}{2\lambda_k}\norm{z_k}^2+g(z_k)\geq \frac{1-2\epsilon}{2\bar\lambda}\norm{z_k}^2+g(z_k).\]
  Taking $\epsilon>0$ small enough such that $\bar\lambda/(1-2\epsilon)<\lambda_g$ we obtain $\liminf_{k\to\infty}g(z_k)/\norm{z_k}^2<-1/(2\lambda_g)$. However,
  by \cite[Exercise 1.24]{RoWe98} there holds $\liminf_{k\to\infty}g(z_k)/\norm{z_k}^2\geq -1/(2\lambda_g)$, a contradiction.

  ad (iii): This is an easy consequence of $\psi^{\rm FB}_{\lambda_2}(x,\cdot)- \psi^{\rm FB}_{\lambda_1}(x,\cdot)=\big(\frac 1{\lambda_2}-\frac 1{\lambda_1}\big)\norm{\cdot-x}^2\leq 0$, $x\in\R^n$.
\end{proof}

\subsection{On \SCD mappings}

In this subsection we want to recall the basic definitions and features  of the \SCD property  introduced in the recent paper \cite{GfrOut22a}.

In what follows we denote by $\Z_n$ the metric space of all $n$-dimensional subspaces of $\R^{2n}$ equipped with the metric
\[d_\Z(L_1,L_2):=\norm{P_{L_1}-P_{L_2}}\]
where $P_{L_i}$ is the symmetric $2n\times 2n$ matrix representing the orthogonal projection on $L_i$, $i=1,2$. $\Z_n$ is a compact metric space, cf. \cite[Lemma 3.1]{GfrOut22a}

A sequence $L_k\in \Z_n$ converges in $\Z_n$ to some $L\in \Z_n$, i.e. $\lim_{k\to\infty}d_\Z(L_k,L)=0$, if and only if $\lim_{k\to\infty}L_k=L$ in the sense of Painlev\'e-Kuratowski convergence, cf. \cite[Lemma 3.1]{GfrOut22a}.

We treat every element of $\R^{2n}$ as a column vector. In order to keep notation simple we write $(u,v)$ instead of $\myvec{u\\v}\in\R^{2n}$ when this does not lead to confusion.

We will also work with bases for the subspaces $L\in\Z_n$. Let $\M_n$ denote the collection of all $2n\times n$ matrices with full column rank $n$ and for $L\in \Z_n$ we define
\[\M(L):=\{Z\in \M_n\mv \rge Z =L\},\]
i.e., the columns of $Z\in\M(L)$ are a basis for $L$.
We can partition every matrix $Z\in\M(L)$ into two $n\times n$ matrices $A$ and $B$ and we will write $Z=(A,B)$ instead of $Z=\myvec{A\\B}$. It follows that $\rge(A,B):=\{(Au,Bu)\mv u\in\R^n\}\doteq \{\myvec{Au\\Bu}\mv u\in\R^n\}=L$.

Further, for every $L\in \Z_n$ we can define the adjoint subspace
\begin{align*}%label{EqDualSubspace}
  L^*&:=\{(-v^*,u^*)\mv (u^*,v^*)\in L^\perp\},
\end{align*}
where $L^\perp$ denotes as usual the orthogonal complement of $L$. Then it can be shown that $(L^*)^*=L$ and $d_\Z(L_1,L_2)=d_\Z(L_1^*,L_2^*)$. Thus the mapping $L\to L^*$ defines an isometry on $\Z_n$.

\begin{definition}\label{DefSCDProperty}
  Consider a mapping  $F:\R^n\tto\R^n$.
  \begin{enumerate}
    \item We call $F$ {\em graphically smooth of dimension $n$} at $(x,y)\in \gph F$, if
    \[T_{\gph F}(x,y)\in \Z_n.\] Further we denote by $\OO_F$ the set of all points belonging to $\gph F$ where $F$ is graphically smooth of dimension $n$.
    \item We associate with $F$ the {\em SC (subspace containing) derivative} $\Sp F:\gph F\tto \Z_n$ and the {\em adjoint SC derivative} $\Sp^* F:\gph F\tto \Z_n$, given by
    \begin{align*}
    \Sp F(x,y)&:=\{L\in \Z_n\mv \exists (x_k,y_k)\longsetto{{\OO_F}}(x,y):\ \lim_{k\to\infty} d_\Z(L,T_{\gph F}(x_k,y_k))=0\},\\
    \Sp^* F(x,y)&:=\{L^*\mv L\in\Sp F(x,y)\}.
    \end{align*}
    \item We say that $F$ has the {\em\SCD (subspace containing derivative) property at} $(x,y)\in\gph F$, if $\Sp F(x,y)\not=\emptyset$. We say that $F$ has the \SCD property {\em around} $(x,y)\in\gph F$, if there is a neighborhood $W$ of $(x,y)$ such that $F$ has the \SCD property at every $(x',y')\in\gph F\cap W$.
     Finally, we call $F$ an {\em \SCD mapping} if
    $F$ has the \SCD property at every point of its graph.
  \end{enumerate}
\end{definition}

The (adjoint) SC derivative was introduced in \cite{GfrOut22a}. Note that the definition of $\Sp^*F$ in \cite{GfrOut22a} is different but equivalent to the definition above, cf. \cite[Remark 3.5]{GfrOut22a}.

In this paper we are concerned with the \SCD property  of subdifferential mappings.
\begin{proposition}[{cf. \cite[Proposition 3.26]{GfrOut22a}}]\label{PropProxRegularQ}
      Suppose that $q:\R^n\to\oR$ is prox-regular and subdifferentially continuous at $\xb$ for $\xb^*\in\partial q(\xb)$. Then $\partial q$ has the \SCD property around $(\xb,\xba)$.  Further, there exists some $\lambda>0$ such that for every $(x,x^*)\in\gph\partial q$ sufficiently close to $(\xb,\xba)$ there holds $\Sp\partial q(x,x^*)=\Sp^*\partial q(x,x^*)$ and for every $L\in\Sp^*\partial q(x,x^*)$  there is a symmetric positive semidefinite $n\times n$ matrix $B$ such that $L=L^*=\rge(B,\frac 1\lambda(I- B))$.
\end{proposition}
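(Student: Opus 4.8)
The plan is to reduce everything to the classical Poliquin--Rockafellar description of $\gph\partial q$ near $(\xb,\xba)$ in terms of the proximal mapping, and then to read the SC derivative off the Jacobian of that mapping. Let $\rho\ge 0$ and $\epsilon>0$ be the prox-regularity parameters of $q$ at $\xb$ for $\xba$ and fix any $\lambda\in(0,1/\rho)$ (with $1/\rho:=+\infty$ when $\rho=0$). First I would invoke the fact that, under prox-regularity and subdifferential continuity, the mapping $p:={\rm prox}_{\lambda q}$ is single-valued, Lipschitz and monotone on a neighbourhood $W$ of $w_0:=\xb+\lambda\xba$, with $p={(I+\lambda\partial q)}^{-1}$ in the localized sense and $p=\nabla\big(\tfrac12\norm{\cdot}^2-\lambda\,e_\lambda q\big)$ for the Moreau envelope $e_\lambda q$. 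The decisive structural consequence is that the part of $\gph\partial q$ near $(\xb,\xba)$ is precisely the image of $W$ under the bi-Lipschitz homeomorphism
\[\Phi(w):=\big(p(w),\tfrac1\lambda(w-p(w))\big),\qquad \Phi^{-1}(x,x^*)=x+\lambda x^*,\]
so that every $(x,x^*)\in\gph\partial q$ close to $(\xb,\xba)$ has the form $x=p(w),\ x^*=\tfrac1\lambda(w-p(w))$ with $w=x+\lambda x^*\in W$.

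Next I would analyse $\Phi$ at a point $w$ where $p$ is differentiable; by Rademacher's theorem these form a set $W_0$ of full measure in $W$. Since $p$ is simultaneously a gradient map and monotone, its Jacobian $B:=\nabla p(w)$ is symmetric, positive semidefinite and bounded, $\norm{B}\le(1-\lambda\rho)^{-1}$. At such $w$ the map $\Phi$ is differentiable with derivative $\nabla\Phi(w)=\big(B,\tfrac1\lambda(I-B)\big)$ of full column rank $n$, and because $\Phi$ is a bi-Lipschitz homeomorphism onto the local graph one obtains
\[T_{\gph\partial q}\big(\Phi(w)\big)=\rge\big(B,\tfrac1\lambda(I-B)\big)\in\Z_n,\]
so $\Phi(W_0)\subseteq\OO_{\partial q}$. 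Conversely, at any point of $\OO_{\partial q}$ near $(\xb,\xba)$ the tangent is an $n$-dimensional subspace on which the linear map $(u,v)\mapsto u+\lambda v$ (the derivative of $\Phi^{-1}$) acts as an isomorphism onto $\R^n$; a short argument then shows that $\Phi$ must in fact be differentiable there. Hence $\OO_{\partial q}$ coincides locally with $\Phi(W_0)$, and at every such point the tangent subspace equals $\rge\big(B,\tfrac1\lambda(I-B)\big)$ with $B=\nabla p$ symmetric positive semidefinite and $\norm{B}\le(1-\lambda\rho)^{-1}$.

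It then remains to pass to the limit and to check self-adjointness. Given $(x,x^*)$ near $(\xb,\xba)$ and $L\in\Sp\partial q(x,x^*)$, by definition $L=\lim_k T_{\gph\partial q}(\Phi(w_k))$ in $\Z_n$ with $w_k\to w:=x+\lambda x^*$, $w_k\in W_0$. The matrices $B_k:=\nabla p(w_k)$ are symmetric, positive semidefinite and uniformly bounded, so a subsequence satisfies $B_k\to B$ with $B$ again symmetric positive semidefinite; the associated subspaces converge to $\rge(B,\tfrac1\lambda(I-B))$, and by uniqueness of the limit $L=\rge(B,\tfrac1\lambda(I-B))$. Compactness of $\Z_n$ together with the density of $W_0$ also yields $\Sp\partial q(x,x^*)\ne\emptyset$, i.e.\ the \SCD property around $(\xb,\xba)$. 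For self-adjointness I would verify directly from the definition that $L=\rge(B,\tfrac1\lambda(I-B))$ with $B=B^T$ satisfies $L=L^*$: writing a generator as $(Bu,\tfrac1\lambda(I-B)u)$ and putting $a:=\tfrac1\lambda(I-B)u$, $b:=-Bu$, the symmetry of $B$ and $I-B$ together with $B(I-B)=(I-B)B$ gives $Ba+\tfrac1\lambda(I-B)b=0$, so $(a,b)\in L^\perp$ and hence $(Bu,\tfrac1\lambda(I-B)u)=(-b,a)\in L^*$; comparing dimensions yields $L=L^*$. Consequently $\Sp^*\partial q(x,x^*)=\Sp\partial q(x,x^*)$, and each element has the claimed form.

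The step I expect to be the main obstacle is the passage between differentiability of $p$ and graphical smoothness of $\partial q$: one must show that mere pointwise differentiability of the Lipschitz map $p$ at $w$ already yields $T_{\gph\partial q}(\Phi(w))=\rge\nabla\Phi(w)$ (equality of the tangent cone with an honest $n$-dimensional subspace, not just an inclusion), and, in the converse direction, that graphical smoothness at $\Phi(w)$ forces $p$ to be differentiable at $w$. Both rest on exploiting the bi-Lipschitz homeomorphism property of $\Phi$ to transport tangent cones back and forth, which is the only genuinely analytic ingredient beyond the cited prox-regularity calculus.
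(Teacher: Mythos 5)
The paper does not actually prove this proposition; it is imported verbatim from \cite[Proposition 3.26, Corollary 3.28]{GfrOut22a}, so there is no in-paper argument to compare against. Your reconstruction is correct and follows the same route as the cited source: parametrize the local graph of $\partial q$ by the bi-Lipschitz map $\Phi(w)=(p(w),\tfrac1\lambda(w-p(w)))$ with $p$ the localized resolvent $(I+\lambda T)^{-1}$, observe that $p$ is simultaneously monotone, Lipschitz with modulus $(1-\lambda\rho)^{-1}$ and a gradient mapping, and then transport tangent cones through $\Phi$. The two points you flag as delicate do go through: the forward direction ($w\in W_0$ implies graphical smoothness at $\Phi(w)$ with tangent $\rge(\nabla p(w),\tfrac1\lambda(I-\nabla p(w)))$) and the converse (graphical smoothness at $\Phi(w)$ forces $(u,v)\mapsto u+\lambda v$ to be injective on the tangent subspace, hence the subspace is $\rge(A,\tfrac1\lambda(I-A))$ and $p$ is differentiable at $w$ with $\nabla p(w)=A$, Hadamard and Fr\'echet differentiability coinciding for Lipschitz maps in finite dimensions) both reduce to the Lipschitz estimates on $\Phi$ and $\Phi^{-1}$ exactly as you indicate. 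The only ingredient worth naming explicitly is the classical fact that a gradient mapping which is (pointwise) differentiable at $w$ has a \emph{symmetric} Jacobian there (e.g.\ \cite[Theorem 13.2]{RoWe98}); combined with monotonicity this gives that $B$ is symmetric positive semidefinite, and your direct verification of $L=L^*$ for $L=\rge(B,\tfrac1\lambda(I-B))$ then matches the computation the paper later records in Lemma \ref{LemSelfadj}. The remaining limiting step is routine given the uniform bound on the matrices $B_k$ and the continuity of $B\mapsto\rge(B,\tfrac1\lambda(I-B))$ in $\Z_n$.
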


Next we turn to the notion of \SCD regularity.
\begin{definition}
\begin{enumerate}
\item We denote by $\Z_n^{\rm reg}$ the collection of all subspaces $L\in\Z_n$ such that
  \begin{equation*}%\label{EqSCDReg_L}
    (y^*,0)\in L\ \Rightarrow\ y^*=0.
  \end{equation*}
  \item  A mapping $F:\R^n\tto\R^n$ is called {\em \SCD regular} at $(x,y)\in\gph F$, if $F$ has the \SCD property at $(x,y)$ and
  \begin{equation}\label{EqSCDReg}
    (y^*,0)\in L^* \Rightarrow\ y^*=0\ \forall L^*\in \Sp^*F(x,y),
  \end{equation}
  i.e., $L^*\in \Z_n^{\rm reg}$ for all $L^*\in \Sp^* F(x,y)$. Further, we will denote by
  \[{\rm scd\,reg\;}F(x,y):=\sup\{\norm{y^*}\mv (y^*,x^*)\in L^*, L^*\in \Sp^*F(x,y), \norm{x^*}\leq 1\}\] %\sup\{\norm{C_L}\mv L\in \Sp^*F(x,y)\}\]
  the {\em modulus of \SCD regularity} of $F$ at $(x,y)$.
\end{enumerate}
\end{definition}
Since the elements of $\Sp^*F(x,y)$ are contained in $\gph D^*F(x,y)$, cf. \cite[Lemma 3.7]{GfrOut22a}, where $D^*F(x,y)$ denotes the limiting coderivative of $F$ at $(x,y)$,  it follows from the Mordukhovich criterion, see, e.g., \cite[Theorem 3.3]{Mo18}, that \SCD regularity is weaker than metric regularity. We refer to \cite{GfrOut22a} for an example demonstrating that SCD regularity is strictly weaker than metric regularity.

In the following proposition we state some basic properties of subspaces $L\in\Z_n^{\rm reg}$.
\begin{proposition}[cf.{\cite[Proposition 4.2]{GfrOut22a}}] \label{PropC_L}
    Given a $2n\times n$ matrix $Z=(A,B)$, there holds $\rge Z\in \Z_n^{\rm reg}$ if and only if the $n\times n$ matrix $B$ is nonsingular. Thus,
    for every $L^*\in \Z_n^{\rm reg}$   there is a  unique $n\times n$ matrix $C_{L^*}$  such that $L^*=\rge(C_{L^*},I)$. Further, $L=\rge(C_{L^*}^T,I)\in\Z_n^{\rm reg}$, $C_L=C_{L^*}^T$,
    \begin{equation*}%\label{EqC_L}
    \skalp{x^*,C_{L^*}^Tv}=\skalp{y^*,v}\ \forall (y^*,x^*)\in L^*\ \forall v\in\R^n
    \end{equation*}
    and
    \begin{equation*}%\label{EqKappa_L}
    \norm{y^*}\leq \norm{C_{L^*}}\norm{x^*}\ \forall (y^*,x^*)\in L^*.
  \end{equation*}
\end{proposition}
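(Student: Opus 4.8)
The plan is to establish the statement by elementary linear algebra, treating its constituent claims in turn. Membership of a range in $\Z_n^{\rm reg}\subseteq\Z_n$ presupposes that it is $n$-dimensional, so I take $Z=(A,B)$ to have full column rank throughout; this holds automatically once $B$ is nonsingular, since $Zu=0$ then forces $Bu=0$ and hence $u=0$.

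First I would prove the characterization that $\rge Z\in\Z_n^{\rm reg}$ holds precisely when $B$ is nonsingular. By definition $(y^*,0)\in\rge Z=\rge(A,B)$ exactly when $Au=y^*$ and $Bu=0$ for some $u$. If $B$ is nonsingular, then $Bu=0$ forces $u=0$ and thus $y^*=Au=0$, so $\rge Z\in\Z_n^{\rm reg}$. Conversely, if $B$ were singular there would be $u\neq0$ with $Bu=0$; full column rank of $Z$ gives $(Au,Bu)\neq0$, whence $Au\neq0$, and then $(Au,0)\in\rge Z$ contradicts the defining implication of $\Z_n^{\rm reg}$.

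Next comes the existence and uniqueness of $C_{L^*}$. For $L^*\in\Z_n^{\rm reg}$ I pick any basis matrix $(A,B)\in\M(L^*)$; the first part yields $B$ nonsingular, and since right multiplication by the invertible $B^{-1}$ preserves the range, $L^*=\rge(A,B)=\rge(AB^{-1},I)$, so $C_{L^*}:=AB^{-1}$ does the job. Uniqueness follows because $\rge(C,I)=\rge(C',I)$ forces $(Cv,v)\in\rge(C',I)$ and hence $C'v=Cv$ for every $v$. Writing a generic element of $L^*=\rge(C_{L^*},I)$ as $(C_{L^*}w,w)$ and identifying $w=x^*$ shows that every $(y^*,x^*)\in L^*$ obeys $y^*=C_{L^*}x^*$; from this the identity $\skalp{x^*,C_{L^*}^Tv}=\skalp{C_{L^*}x^*,v}=\skalp{y^*,v}$ and the bound $\norm{y^*}=\norm{C_{L^*}x^*}\le\norm{C_{L^*}}\,\norm{x^*}$ in the spectral norm follow at once.

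The step I expect to require the most care is the remaining claim $L=\rge(C_{L^*}^T,I)$ with $C_L=C_{L^*}^T$, since it forces me to unwind the adjoint construction and keep the signs and transposes straight. I would first compute $(L^*)^\perp$ for $L^*=\rge(C_{L^*},I)$: a pair $(a,b)$ is orthogonal to every $(C_{L^*}w,w)$ iff $\skalp{C_{L^*}^Ta+b,w}=0$ for all $w$, i.e. $b=-C_{L^*}^Ta$, so $(L^*)^\perp=\{(a,-C_{L^*}^Ta)\mv a\in\R^n\}$. Substituting $(u^*,v^*)=(a,-C_{L^*}^Ta)$ into $L=(L^*)^*=\{(-v^*,u^*)\mv(u^*,v^*)\in(L^*)^\perp\}$ gives $(-v^*,u^*)=(C_{L^*}^Ta,a)$, whence $L=\rge(C_{L^*}^T,I)$. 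That this lies in $\Z_n^{\rm reg}$ is immediate from the first part (its second block is $I$), and the uniqueness already proved identifies $C_L=C_{L^*}^T$. Everything apart from this adjoint bookkeeping is routine.
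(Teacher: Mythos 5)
Your proof is correct. Note that the paper does not prove this proposition at all --- it is imported verbatim from \cite[Proposition 4.2]{GfrOut22a} --- so there is no in-paper argument to compare against; your self-contained linear-algebra derivation (the rank observation for the first equivalence, $C_{L^*}=AB^{-1}$ for existence, and the explicit computation of $(L^*)^\perp$ to unwind the adjoint and obtain $L=\rge(C_{L^*}^T,I)$) is exactly the standard route and all the transposes and signs check out.
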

Note that for every $L^*\in\Z_n^{\reg}$ and every $(A,B)\in \M(L^*)$ the matrix $B$ is nonsingular and $C_{L^*}=AB^{-1}$.

The next lemma follows  from \cite[Lemma 4.7, Proposition 4.8]{GfrOut22a}.
\begin{lemma}\label{Lem_scdreg}
   Assume that $F:\R^n\tto\R^n$ is \SCD regular at $(\xb,\yb)\in\gph F$. Then
   \[{\rm scd\,reg\;}F(\xb,\yb)=\sup\{\norm{C_{L^*}}\mv L^*\in\Sp^*F(\xb,\yb)\}<\infty.\]
   Moreover, if $F$ has the \SCD property around $(\xb,\yb)$, then $F$ is \SCD regular at every $(x,y)\in\gph F$ sufficiently close to $(\xb,\yb)$ and
  \[\limsup_{(x,y)\longsetto{\gph F}(\xb,\yb)}{\rm scd\,reg\;}F(x,y)\leq{\rm scd\,reg\;}F(\xb,\yb).\]
\end{lemma}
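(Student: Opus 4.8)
The plan is to prove the two assertions separately, relying throughout on Proposition \ref{PropC_L} to pass between elements of $\Sp^*F$ and the matrices $C_{L^*}$, and on the compactness of $\Z_n$. First I would rewrite the modulus. Since $F$ is \SCD regular at $(\xb,\yb)$, every $L^*\in\Sp^*F(\xb,\yb)$ lies in $\Z_n^{\rm reg}$, so by Proposition \ref{PropC_L} it can be written as $L^*=\rge(C_{L^*},I)$. Hence the elements of $L^*$ are exactly the pairs $(C_{L^*}u,u)$, $u\in\R^n$; writing $(y^*,x^*)=(C_{L^*}u,u)$, the constraint $\norm{x^*}\leq1$ becomes $\norm u\leq1$ and $\norm{y^*}=\norm{C_{L^*}u}$. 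Taking the supremum over $u$ with $\norm u\le1$ gives $\sup\{\norm{y^*}\mv(y^*,x^*)\in L^*,\ \norm{x^*}\le1\}=\norm{C_{L^*}}$ (spectral norm), and taking the supremum over $L^*\in\Sp^*F(\xb,\yb)$ yields $\scdreg F(\xb,\yb)=\sup\{\norm{C_{L^*}}\mv L^*\in\Sp^*F(\xb,\yb)\}$.

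Second, I would prove finiteness by a compactness argument. By its definition as an outer limit of tangent subspaces, $\Sp F(\xb,\yb)$ is a closed, hence (as $\Z_n$ is compact) compact, nonempty subset of $\Z_n$; since $L\mapsto L^*$ is an isometry, $\Sp^*F(\xb,\yb)$ is compact and nonempty as well. \SCD regularity gives $\Sp^*F(\xb,\yb)\subset\Z_n^{\rm reg}$, and on $\Z_n^{\rm reg}$ the assignment $L^*\mapsto C_{L^*}$ is continuous: writing the orthogonal projection $P_{L^*}$ in $2\times2$ block form, the regular subspaces are precisely those whose lower-right block is nonsingular, and $C_{L^*}$ is a continuous rational expression in the blocks of $P_{L^*}$ there. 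Therefore $L^*\mapsto\norm{C_{L^*}}$ is continuous on the compact set $\Sp^*F(\xb,\yb)$, so it attains its maximum and $\scdreg F(\xb,\yb)<\infty$.

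Finally, for the ``moreover'' part I would argue by robustness and contradiction. Since $F$ has the \SCD property around $(\xb,\yb)$, $\Sp^*F(x,y)$ is nonempty for all $(x,y)\in\gph F$ near $(\xb,\yb)$. The key property is an outer-semicontinuity (closed-graph) behaviour of the set-valued map $(x,y)\mapsto\Sp^*F(x,y)$: if $(x_k,y_k)\longsetto{\gph F}(\xb,\yb)$ and $L_k^*\in\Sp^*F(x_k,y_k)$ with $L_k^*\to L^*$ in $\Z_n$, then $L^*\in\Sp^*F(\xb,\yb)$. Granting this, suppose \SCD regularity failed at points arbitrarily close to $(\xb,\yb)$, or that the claimed $\limsup$ inequality failed; in either case one produces a sequence $(x_k,y_k)\to(\xb,\yb)$ and $L_k^*\in\Sp^*F(x_k,y_k)$ with either $L_k^*\notin\Z_n^{\rm reg}$ or $\norm{C_{L_k^*}}$ converging to some value $>\scdreg F(\xb,\yb)$. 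Extracting (by compactness of $\Z_n$) a convergent subsequence $L_k^*\to L^*\in\Sp^*F(\xb,\yb)\subset\Z_n^{\rm reg}$ and passing to the limit, using the continuity of $C_{(\cdot)}$ and the openness of $\Z_n^{\rm reg}$, yields a contradiction with the identity for $\scdreg F(\xb,\yb)$ established in the first part.

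I expect the main obstacle to be the outer-semicontinuity (diagonal) property of $\Sp^*F$: because each $\Sp^*F(x_k,y_k)$ is itself defined as an outer limit of tangent spaces $T_{\gph F}$, concluding that a limit of such limits again belongs to $\Sp^*F(\xb,\yb)$ requires a careful diagonal extraction together with the fact that the points of graphical smoothness realizing $L_k^*$ can be chosen converging to $(\xb,\yb)$. This, together with the openness of $\Z_n^{\rm reg}$ guaranteeing that nearby adjoint derivatives stay regular, is exactly the content underlying the cited \cite[Lemma 4.7, Proposition 4.8]{GfrOut22a}.
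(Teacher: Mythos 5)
The paper offers no proof of this lemma at all: it is imported wholesale from \cite[Lemma 4.7, Proposition 4.8]{GfrOut22a}, so your argument can only be measured against what that citation must contain, and as a self-contained reconstruction it is essentially correct. The identity $\scdreg F(\xb,\yb)=\sup\{\norm{C_{L^*}}\mv L^*\in\Sp^*F(\xb,\yb)\}$ is exactly the unwinding of Proposition \ref{PropC_L} that you perform; finiteness follows from compactness of $\Sp^*F(\xb,\yb)$ (a closed subset of the compact space $\Z_n$, transported by the isometry $L\mapsto L^*$) plus continuity of $L^*\mapsto C_{L^*}$ on $\Z_n^{\rm reg}$; and the ``moreover'' part is the standard closed-graph contradiction argument. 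The two load-bearing ingredients you invoke are both legitimate: (a) outer semicontinuity of $(x,y)\mapsto\Sp F(x,y)$ along $\gph F$ is precisely \cite[Lemma 3.13]{GfrOut22a}, which this paper uses elsewhere, and your diagonal-extraction sketch (choose $(x_k',y_k')\in\OO_F$ within $1/k$ of $(x_k,y_k)$ with $d_\Z(T_{\gph F}(x_k',y_k'),L_k)<1/k$) is the correct proof of it. (b) The only slightly shaky spot is your justification of the continuity of $C_{(\cdot)}$ via ``rational expressions in the blocks of $P_{L^*}$'' --- it is not obvious how to extract a basis $(A,B)$ with controlled nonsingular $B$ from the projection matrix. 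A cleaner route is a normalization argument: if $L_k\to L\in\Z_n^{\rm reg}$ and $(u_k,v_k)\in L_k$ with $\norm{(u_k,v_k)}=1$ and $v_k\to 0$, any limit point has the form $(u,0)\in L$ with $\norm{u}=1$, contradicting $L\in\Z_n^{\rm reg}$; this simultaneously gives openness of $\Z_n^{\rm reg}$ and the local boundedness needed to conclude $C_{L_k}v\to C_Lv$ for each $v$, hence $\norm{C_{L_k}}\to\norm{C_L}$. With that small repair your proof is complete and is, in all likelihood, the same compactness-plus-outer-semicontinuity argument carried out in the cited source.
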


As an example of \SCD regularity we refer to the subdifferential mapping around tilt-stable local minimizers.
\begin{definition} [\bf tilt-stable minimizers]\label{DefTiltStab} Let $q:\R^n\to\oR$, and let $\xb\in\dom q$. Then $\xb$ is a {\em tilt-stable local minimizer} of $q$ if there is a number $\gamma>0$ such that the mapping
\begin{equation}\label{EqM_gamma}
M_\gamma(x^\ast):=\argmin\big\{q(x)-\skalp{x^\ast,x}\mv x\in\B_\gamma(\xb)\big\},\quad x^*\in\R^n,
\end{equation}
is single-valued and Lipschitz continuous in some neighborhood of $\bar x^*=0\in\R^n$ with $M_\gamma(0)=\{\xb\}$.
\end{definition}

\begin{theorem}[{\cite[Theorem 7.9]{GfrOut22a}}]\label{ThTiltStab}
  For a function $q:\R^n\to \oR$ having $0\in\partial q(\xb)$ and such that $q$ is both prox-regular and subdifferentially continuous at  $\xb$ for $\xba=0$, the following statements are equivalent:
  \begin{enumerate}
  \item[(i)] $\xb$ is a tilt-stable local minimizer of $q$.
  \item[(ii)] $\partial q$ is \SCD regular at $(\xb,0)$ and $C_L$ is positive semidefinite for every $L\in\Sp \partial q(\xb,0)=\Sp^* \partial q(\xb,0)$.
  \end{enumerate}
\end{theorem}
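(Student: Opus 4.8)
The plan is to prove the two implications separately, after first converting condition~(ii) into a transparent algebraic statement about the matrices $B$ furnished by Proposition~\ref{PropProxRegularQ}. Since $q$ is prox-regular and subdifferentially continuous at $\xb$ for $\xba=0$, that proposition applies at $(\xb,0)$: there is a fixed $\lambda>0$ so that every $L\in\Sp\partial q(\xb,0)=\Sp^*\partial q(\xb,0)$ satisfies $L=L^*=\rge\big(B,\tfrac1\lambda(I-B)\big)$ for some symmetric positive semidefinite $B$. By Proposition~\ref{PropC_L}, such an $L$ lies in $\Z_n^{\rm reg}$ exactly when its second block $\tfrac1\lambda(I-B)$ is nonsingular, i.e.\ when $I-B$ is nonsingular, and in that case $C_L=C_{L^*}=\lambda B(I-B)^{-1}$. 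As $B$ and $(I-B)^{-1}$ commute, $C_L$ is symmetric with eigenvalues $\lambda\mu/(1-\mu)$, $\mu\in\mathrm{spec}(B)$, which are all nonnegative precisely when every $\mu<1$. I would therefore first record the reduction that condition~(ii) — \SCD regularity at $(\xb,0)$ together with $C_L\succeq0$ for all $L$ — is equivalent to
\[I-B\succ0\quad\text{for every }B\text{ arising in }\Sp\partial q(\xb,0).\]

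Next I would invoke the classical second-order characterization of tilt stability for prox-regular, subdifferentially continuous functions due to Poliquin and Rockafellar: $\xb$ is a tilt-stable local minimizer of $q$ if and only if the second-order subdifferential $D^*\partial q(\xb,0)$ is positive definite, i.e.\ $\skalp{z,w}>0$ whenever $z\in D^*\partial q(\xb,0)(w)$ and $w\neq0$. The bridge to the SC derivative is the inclusion $L^*\subseteq\gph D^*\partial q(\xb,0)$ for every $L^*\in\Sp^*\partial q(\xb,0)$, cf.\ \cite[Lemma 3.7]{GfrOut22a}: a point $(w,z)\in L^*=L$ is parametrized by $w=Bu$, $z=\tfrac1\lambda(I-B)u$, giving
\[\skalp{z,w}=\tfrac1\lambda\skalp{(I-B)u,Bu}=\tfrac1\lambda\skalp{u,(B-B^2)u},\]
with eigenvalues $\tfrac1\lambda\mu(1-\mu)$. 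One checks that $\skalp{z,w}>0$ for all $w=Bu\neq0$ holds if and only if every eigenvalue of $B$ is strictly less than $1$, i.e.\ again $I-B\succ0$. This at once settles (i)$\Rightarrow$(ii): tilt-stability forces positive definiteness of the whole $D^*\partial q(\xb,0)$, hence on each subspace $L^*\in\Sp^*\partial q(\xb,0)$, whence $I-B\succ0$ and (ii) follows from the reduction above.

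The delicate implication is (ii)$\Rightarrow$(i), and I expect it to be the main obstacle. Here I only have positivity of $\skalp{z,\cdot}$ on the union $\bigcup_{L^*\in\Sp^*\partial q(\xb,0)}L^*$, whereas the Poliquin--Rockafellar test requires it on the full coderivative graph $\gph D^*\partial q(\xb,0)$; these agree only up to the inclusion of Lemma~3.7, and in general the SC derivative captures strictly less than the limiting coderivative — this is exactly why \SCD regularity is weaker than metric regularity. To close the gap I would exploit, first, the self-adjoint structure special to subdifferentials ($L=L^*$, $B$ symmetric), and, second, the stability of condition~(ii) under perturbation: by Lemma~\ref{Lem_scdreg}, once \SCD regularity holds at $(\xb,0)$ it persists at all nearby $(x,x^*)\in\gph\partial q$ with a uniform bound on $\scdreg\partial q$, and positive semidefiniteness of $C_L$ should propagate along the dense set of graphically smooth points by an approximation argument. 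Passing to the limit in the compact space $\Z_n$ would then show that every element of $N_{\gph\partial q}(\xb,0)$ is reached as a limit of regular normals at graphically smooth points, so that $\gph D^*\partial q(\xb,0)=\bigcup_{L^*}L^*$ for this subdifferential; the subspace-wise positivity transfers to the whole coderivative and Poliquin--Rockafellar delivers tilt-stability.

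A cleaner alternative for (ii)$\Rightarrow$(i), sidestepping the coderivative, is to read $I-B\succ0$ geometrically: it means $L=\rge(C_L,I)$ with $C_L\succeq0$, so every tangent-space limit of $\gph\partial q$ is the graph of a symmetric positive semidefinite map from the $x^*$- to the $x$-coordinate. Uniform boundedness of $\norm{C_L}$ together with density of graphically smooth points should force $\gph\partial q$ to be, locally near $(\xb,0)$, the graph of a single-valued Lipschitz localization of $(\partial q)^{-1}$ — strong metric regularity of $\partial q$ — while $C_L\succeq0$ renders this localization monotone, so that $M_\gamma$ in \eqref{EqM_gamma} is single-valued and Lipschitz, i.e.\ tilt-stability holds. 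Either route leaves the same crux: upgrading a property verified on the $n$-dimensional subspaces of the SC derivative to the corresponding global second-order (or localization) statement, which is where the symmetric, gradient structure of $\partial q$ must be used in an essential way.
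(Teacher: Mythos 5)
The paper does not actually prove Theorem \ref{ThTiltStab}; it is imported verbatim from \cite[Theorem 7.9]{GfrOut22a}, so there is no internal proof to compare against. On its own merits, your reduction of condition (ii) to ``$I-B\succ 0$ for every $B$ furnished by Proposition \ref{PropProxRegularQ}'' is correct (SCD regularity is nonsingularity of $I-B$ by Proposition \ref{PropC_L}, and $C_L=\lambda B(I-B)^{-1}\succeq 0$ then pins the spectrum of $B$ into $[0,1)$), and the implication (i)$\Rightarrow$(ii) goes through as you describe via the Poliquin--Rockafellar positive-definiteness criterion restricted along the inclusion $L^*\subseteq\gph D^*\partial q(\xb,0)$.

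The genuine gap is (ii)$\Rightarrow$(i), which you flag yourself, and neither of your two routes closes it. Route one rests on the identity $\gph D^*\partial q(\xb,0)=\bigcup_{L^*\in\Sp^*\partial q(\xb,0)}L^*$, which is false already for proper lsc convex functions: for $q=\delta_{\R_+}$ on $\R$, the union of the SC-derivative subspaces at $(0,0)$ is the two coordinate axes, whereas the limiting normal cone to $\gph\partial q$ at $(0,0)$ additionally contains the regular normal cone $\R_-\times\R_+$ of the corner point itself, so the coderivative graph is strictly larger than the union of the $L^*$. Route two asserts that SCD regularity plus a uniform bound on $\norm{C_L}$ forces a single-valued Lipschitz (hence strongly metrically regular) localization of $(\partial q)^{-1}$; but the paper stresses that SCD regularity is strictly weaker than metric regularity, and the only upgrade its toolbox provides (Proposition \ref{Prop_sSR}) is strong metric \emph{sub}regularity, which does not yield single-valuedness of the inverse. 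So the hard half of the equivalence is not established. What is needed is a second-order test for tilt stability that is verifiable on tangent-space data at \emph{nearby} points of $\gph\partial q$ (a graphical-derivative-type criterion), so that density of graphically smooth points and compactness of $\Z_n$ can transport the subspace-wise positivity; that is the mechanism used in \cite{GfrOut22a}, and it is the missing ingredient here.
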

Note that the subdifferential mapping $\partial q$ can be \SCD regular also at  local minimizers which are not tilt-stable, cf. \cite[Example 6.5]{GfrOut22a}.

\subsection{On the \SCD semismooth$^{*}$ Newton method}

In this subsection we outline  the \SCD semismooth$^{*}$ Newton method presented in  the recent paper \cite{GfrOut22a} for solving the inclusion
\[0\in F(x),\]
where $F:\R^n\tto\R^n$ is a set-valued mapping.

\begin{definition}\label{DefSCDssstar}
We say that $F:\R^n\tto\R^n$ is {\em\SCD \ssstar at} $(\xb,\yb)\in\gph F$ if $F$ has the \SCD property around $(\xb,\yb)$ and
for every $\epsilon>0$ there is some $\delta>0$ such that
\begin{align}\label{EqDefSCDSemiSmooth}
\vert \skalp{x^*,x-\xb}-\skalp{y^*,y-\yb}\vert&\leq \epsilon
\norm{(x,y)-(\xb,\yb)}\norm{(x^*,y^*)}
\end{align}
holds for all $(x,y)\in \gph F\cap \B_\delta(\xb,\yb)$ and all $(y^*,x^*)\in L^*$, $L^* \in \Sp^*F(x,y)$.
\end{definition}

The SCD \ssstar property has also a useful primal interpretation:

\begin{proposition}
  Let the  mapping $F:\R^n\tto\R^m$ and the pair $(\xb,\yb)\in\gph F$ be given. Then the following two statements are equivalent:
  \begin{enumerate}
    \item $F$ is SCD \ssstar at $(\xb,\yb)$.
    \item For every $\epsilon>0$ there is some $\delta>0$ such that for every $(x,y)\in \gph F\cap\B_\delta(\xb,\yb)$ and every $L\in \Sp F(x,y)$ there holds
    \[\dist{(x-\xb,y-\yb),L}\leq\epsilon\norm{(x-\xb,y-\yb)}.\]
  \end{enumerate}
\end{proposition}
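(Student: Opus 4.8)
The plan is to verify that, for each fixed $(x,y)\in\gph F$ close to $(\xb,\yb)$ and each fixed $L\in\Sp F(x,y)$, the inner-product condition \eqref{EqDefSCDSemiSmooth} quantified over all $(y^*,x^*)\in L^*$ is literally the same as the distance estimate $\dist{(x-\xb,y-\yb),L}\le\epsilon\norm{(x-\xb,y-\yb)}$. Once this is established pointwise, the equivalence of the two statements is immediate: the defining relation $\Sp^*F(x,y)=\{L^*\mv L\in\Sp F(x,y)\}$ makes $L\mapsto L^*$ a bijection of $\Sp F(x,y)$ onto $\Sp^*F(x,y)$, so that ranging over $L^*\in\Sp^*F(x,y)$ is the same as ranging over $L\in\Sp F(x,y)$, and the very same $\delta$ (for a given $\epsilon$) serves both formulations.

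First I would abbreviate $w:=(x-\xb,y-\yb)\in\R^{2n}$ and unwind the definition $L^*=\{(-v^*,u^*)\mv(u^*,v^*)\in L^\perp\}$. This shows that $(y^*,x^*)\in L^*$ holds exactly when $(x^*,-y^*)\in L^\perp$. Setting $p:=(x^*,-y^*)$, a one-line computation gives
\[\langle x^*,x-\xb\rangle-\langle y^*,y-\yb\rangle=\langle p,w\rangle,\qquad \norm{(x^*,y^*)}=\norm{p},\]
the second identity because flipping the sign of $y^*$ does not change the Euclidean norm. Hence, for the fixed $L$, inequality \eqref{EqDefSCDSemiSmooth} holding for all $(y^*,x^*)\in L^*$ is equivalent to
\[\vert\langle p,w\rangle\vert\le\epsilon\,\norm{w}\,\norm{p}\qquad\text{for all }p\in L^\perp.\]

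It then remains to read the last display as a distance bound. The key tool is the elementary projection formula
\[\dist{w,L}=\norm{P_{L^\perp}w}=\sup\{\langle p,w\rangle\mv p\in L^\perp,\ \norm{p}\le1\},\]
which comes from the orthogonal splitting $w=P_Lw+P_{L^\perp}w$ together with $\langle p,P_Lw\rangle=0$ for $p\in L^\perp$ and the Cauchy--Schwarz inequality; since $L^\perp$ is a subspace, the supremum is unchanged if one inserts an absolute value around $\langle p,w\rangle$. Dividing the displayed bound by $\norm{p}$ and taking the supremum over $0\ne p\in L^\perp$ therefore turns it into $\dist{w,L}\le\epsilon\norm{w}$, which is exactly the inequality in the second statement, and the chain of equivalences closes the argument.

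I do not expect a genuine obstacle here; the whole content is that the adjoint operation $L\mapsto L^*$ converts the coderivative-type inner-product condition into an orthogonal-projection distance. The only points requiring care are bookkeeping ones: tracking the sign built into the definition of $L^*$ so that membership in $L^*$ becomes membership in $L^\perp$ with the correct signs, and handling the degenerate case $P_{L^\perp}w=0$ in the projection formula, where both sides vanish.
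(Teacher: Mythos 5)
Your proposal is correct and follows essentially the same route as the paper: both reduce the equivalence to the distance formula $\dist{w,L}=\sup\{\skalp{p,w}\mv p\in L^\perp,\ \norm{p}=1\}$ combined with the observation that $(y^*,x^*)\in L^*$ exactly when $(x^*,-y^*)\in L^\perp$, which turns $\skalp{x^*,x-\xb}-\skalp{y^*,y-\yb}$ into $\skalp{p,w}$ with $\norm{p}=\norm{(x^*,y^*)}$. The paper's proof is just a more compact version of the same computation, so no further comparison is needed.
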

  \begin{proof}
    Consider $(x,y)\in\gph F$ and $L\in \Sp F(x,y)$. By the well-known distance formula we have
    \begin{align*}\dist{(x-\xb,y-\yb),L}&=\sup\{\skalp{x^*,x-\xb}+\skalp{y^*,y-\yb}\mv  (x^*,y^*)\in L^\perp, \norm{(x^*,y^*)}=1\}\\
    &=\sup\{\vert \skalp{x^*,x-\xb}-\skalp{y^*,y-\yb}\vert\mv (y^*,x^*)\in L^*, \norm{(y^*,x^*)}=1\}
    \end{align*}
    and the equivalence follows now from Definition \ref{EqDefSCDSemiSmooth}.
  \end{proof}

The notion of \SCD semismoothness$^{*}$ is weaker than the one of semismoothness$^{*}$ introduced in \cite{GfrOut21}. Thus, we obtain from \cite[Proposition 2.10]{GfrOut22a} the following
result.
\begin{proposition}
  Let $F:\R^n\tto\R^n$ be an \SCD mapping. Each of the following two conditions is sufficient for $F$ to be \SCD \ssstar at every point of its graph:
  \begin{enumerate}
    \item[(i)]$\gph $F is the union of finitely many convex sets.
    \item[(ii)]$\gph F$ is a closed subanalytic set.
  \end{enumerate}
\end{proposition}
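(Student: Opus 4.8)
The plan is to reduce the statement to two ingredients, both already available, and then assemble them. Since $F$ is assumed to be an \SCD mapping, it has the \SCD property at, and hence around, every point of its graph, so the structural prerequisite appearing in Definition \ref{DefSCDssstar} is automatically satisfied throughout $\gph F$; what is left to verify is only the defining inequality. The first ingredient is that each of the conditions (i) and (ii) already forces the (stronger) semismoothness$^*$ property of \cite{GfrOut21} at every point of $\gph F$; the second is the general implication that semismoothness$^*$ entails \SCD semismoothness$^*$. Putting these together at an arbitrary $(\xb,\yb)\in\gph F$ yields the claim.

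For the first ingredient I would simply invoke \cite[Proposition 2.10]{GfrOut22a}, which asserts precisely that a mapping whose graph is a finite union of convex sets, and likewise a mapping whose graph is a closed subanalytic set, is semismooth$^*$ at every point of its graph (for the subanalytic case this ultimately rests on the result of Jourani quoted in the introduction). No further work is required here beyond citing this result, whose hypotheses are exactly (i) and (ii).

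The heart of the argument is therefore the implication semismoothness$^*\Rightarrow$ \SCD semismoothness$^*$. Here I would use the inclusion recorded earlier in the text, namely that every $L^*\in\Sp^*F(x,y)$ is contained in $\gph D^*F(x,y)$ (\cite[Lemma 3.7]{GfrOut22a}). The semismoothness$^*$ property demands, for each $\epsilon>0$ and all $(x,y)\in\gph F$ near $(\xb,\yb)$, that
\[|\skalp{x^*,x-\xb}-\skalp{y^*,y-\yb}|\leq\epsilon\norm{(x,y)-(\xb,\yb)}\norm{(x^*,y^*)}\]
hold for every coderivative pair $(y^*,x^*)\in\gph D^*F(x,y)$. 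Restricting this universal quantifier to those pairs $(y^*,x^*)$ lying in some $L^*\in\Sp^*F(x,y)$ is legitimate precisely because each such $L^*$ is a subset of $\gph D^*F(x,y)$, and it produces verbatim the inequality \eqref{EqDefSCDSemiSmooth} of Definition \ref{DefSCDssstar}. Combined with the \SCD property around $(\xb,\yb)$, this delivers \SCD semismoothness$^*$ at $(\xb,\yb)$, and since $(\xb,\yb)$ was arbitrary, at every point of the graph.

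I do not anticipate a genuine obstacle: once Proposition 2.10 and Lemma 3.7 are in hand, the proof is essentially a packaging argument. The only point deserving care is the bookkeeping of the ordering conventions, since the semismoothness$^*$ inequality is indexed by coderivative pairs while the \SCD version is indexed by $(y^*,x^*)\in L^*$. I would make sure that the pairing $(y^*,x^*)$ used in the definition of $\Sp^*F$ is exactly the one under which $L^*\subseteq\gph D^*F(x,y)$ in Lemma \mbox{3.7}, after which the two inequalities coincide term by term and the restriction step is immediate.
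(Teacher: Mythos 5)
Your proof is correct and follows essentially the same route as the paper, which disposes of the statement in one line by noting that \SCD semismoothness$^{*}$ is weaker than semismoothness$^{*}$ and then citing \cite[Proposition 2.10]{GfrOut22a} for conditions (i) and (ii). The only thing you add is the explicit justification of the implication semismoothness$^{*}\Rightarrow$ \SCD semismoothness$^{*}$ via the inclusion $L^*\subseteq\gph D^*F(x,y)$ from \cite[Lemma 3.7]{GfrOut22a}, which is exactly the mechanism the paper has in mind.
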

\begin{example}
  Consider $g(x)=\norm{x}$ on $\R^n$. Since
  \[\gph \partial g=\left\{(x,x^*)\in\R^n\times\R^n\mv \sum_{i=1}^n {x_i^*}^2\leq 1, \Big(\sum_{i=1}^nx_ix_i^*\Big)^2=\sum_{i=1}^nx_i^2, \sum_{i=1}^nx_ix_i^*\geq 0\right\}\]
  is a closed subanalytic set and the subdifferential of any proper lsc convex function is an \SCD mapping by Proposition \ref{PropProxRegularQ}, $\partial g$ is \SCD \ssstar at every point of its graph.
\end{example}
The basic idea behind the \SCD semismooth$^{*}$ Newton method is as follows. Assume that the mapping $F$ is \SCD \ssstar at $(\xb,0)\in\gph F$ and consider a point $(x,y)\in\gph F$ close to $(\xb,0)$ together with a subspace $L^*\in \Sp^*F(x,y)$ and $n$ pairs $(y_i^*,x_i^*)\in\R^n\times\R^n$, $i=1,\ldots,n$, forming a basis of $L^*$. Then, by \eqref{EqDefSCDSemiSmooth}, there holds
\[\skalp{x_i^*,\xb-x}=-\skalp{y_i^*,y}+\oo(\norm{(x,y)-(\xb,0)})\norm{(y_i^*,x_i^*)},\ i=1,\ldots,n\]
and we expect that, under suitable assumptions, a unique solution of the system of linear equations
\[\skalp{x_i^*,\Delta x}=-\skalp{y_i^*,y},\ i=1,\ldots,n\]
exists and satisfies $\norm{x+\Delta x-\xb}=\oo(\norm{(x,y)-(\xb,0)})$.

Let us formalize this idea. Consider an iterate $\ee xk$. Since we are dealing with multifunctions, we cannot expect that $F(\ee xk)\not=\emptyset$ or that $0$ is close to $F(\ee xk)$, even if $\ee xk$ is very close to the solution $\xb$. Hence, we first perform a so-called {\em approximation step} by finding a pair $(\ee {\hat x}k,\ee{\hat y}k)$ satisfying
\begin{equation}
  \label{EqApprStepF}\norm{(\ee{\hat x}k,\ee{\hat y}k)-(\xb,0)}\leq\eta\norm{\ee xk-\xb}
\end{equation}
for some parameter $\eta>0$. This bound involves the unknown solution $\xb$. However, if we compute $(\ee {\hat x}k,\ee{\hat y}k)$ as some approximate projection of $(\ee xk,0)$ on $\gph F$, i.e.,
\[\norm{(\ee {\hat x}k,\ee{\hat y}k)-(\ee xk,0)}\leq \beta\dist{(\ee xk,0),\gph F}\]
for some $\beta\geq 1$, then
\begin{align*}\norm{(\ee{\hat x}k,\ee{\hat y}k)-(\xb,0)}&\leq \norm{(\ee {\hat x}k,\ee{\hat y}k)-(\ee xk,0)}+\norm{(\ee xk,0)-(\xb,0)}\\
&\leq\beta\dist{(\ee xk,0),\gph F}+\norm{(\ee xk,0)-(\xb,0)}\leq(\beta+1)\norm{(\ee xk,0)-(\xb,0)}\end{align*}
and therefore \eqref{EqApprStepF} holds with $\eta=\beta+1$. Then we perform the so-called {\em Newton step}. We choose some subspace $\ee {L^*}k\in \Sp^*F(\ee{\hat x}k,\ee{\hat y}k)$ together with some basis $(\ee Bk,\ee Ak)\in\M(\ee {L^*}k)$ and compute the Newton direction $\Delta\ee xk$ as solution of the linear system
\begin{equation}\label{EqNewtonStep}{\ee Ak}^T\Delta x=-{\ee Bk}^T\ee {\hat y}k.\end{equation}
By Proposition \ref{PropC_L}, the matrix $\ee Ak$ is nonsingular if and only if $\ee {L^*}k\in \Z_n^{\rm reg}$ and in this case we have
\begin{equation*}\Delta \ee xk=-{\ee Ak}^{-T}{\ee Bk}^T\ee {\hat y}k=-C_{\ee {L^*}k}^T\ee {\hat y}k.\end{equation*}
An equivalent approach for computing the Newton direction is based on the relation $C_L=C_{L^*}^T$. Choosing a subspace $\ee Lk\in\Sp F(\ee{\hat x}k,\ee{\hat y}k)$ and a basis $(\ee Xk,\ee Yk)\in \M(\ee Lk)$, the Newton direction is given by
\[\ee {\Delta x}k=\ee Xk p\quad\mbox{where $p$ solves}\quad \ee Yk p=-\ee {\hat y}k.\]
\\
Finally we get the new iterate as $\ee x{k+1}=\ee{\hat x}k+\Delta\ee xk$.

The following proposition provides the key estimate for the \ssstar Newton method for \SCD mappings.
\begin{proposition}[cf. {\cite[Proposition 5.3]{GfrOut22a}}]\label{PropConvNewton}
  Assume that $F:\R^n\tto\R^n$ is \SCD \ssstar at $(\xb,\yb)\in\gph F$. Then for every $\epsilon>0$ there is some $\delta>0$ such that the estimate
  \begin{equation*}%\label{EqBndNewtonStep}
  \norm{x-C_{L^*}^T(y-\yb)-\xb}\leq \epsilon\sqrt{n(1+\norm{C_{L^*}}^2)}\norm{(x,y)-(\xb,\yb)}
  \end{equation*}
  holds for every $(x,y)\in\gph F\cap \B_\delta(\xb,\yb)$ and every $L^*\in\Sp^*F(x,y)\cap\Z_n^{\rm reg}$.
\end{proposition}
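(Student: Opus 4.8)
The plan is to read the desired bound directly off the \SCD semismoothness$^{*}$ inequality \eqref{EqDefSCDSemiSmooth} by testing it against a well-chosen basis of $L^*$. First I would fix $\epsilon>0$ and take the $\delta>0$ furnished by Definition \ref{DefSCDssstar}, so that \eqref{EqDefSCDSemiSmooth} holds for every $(x,y)\in\gph F\cap\B_\delta(\xb,\yb)$ and every $(y^*,x^*)\in L^*$ with $L^*\in\Sp^*F(x,y)$. Crucially, this single $\delta$ is uniform over all admissible subspaces, which is exactly the uniformity the conclusion demands; the restriction $L^*\in\Z_n^{\rm reg}$ needs no separate treatment beyond guaranteeing that $C_{L^*}$ is well defined.

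The key observation is that the scalar in \eqref{EqDefSCDSemiSmooth} is precisely a coordinate of the error vector $w:=x-C_{L^*}^T(y-\yb)-\xb$. Indeed, since $L^*\in\Z_n^{\rm reg}$, Proposition \ref{PropC_L} gives $\skalp{x^*,C_{L^*}^T(y-\yb)}=\skalp{y^*,y-\yb}$ for every $(y^*,x^*)\in L^*$, so the left-hand side of \eqref{EqDefSCDSemiSmooth} equals $\vert\skalp{x^*,w}\vert$. I would then specialize to the $n$ pairs $(y_i^*,x_i^*)=(C_{L^*}e_i,e_i)$, $i=1,\dots,n$, which lie in $L^*=\rge(C_{L^*},I)$ by the representation in Proposition \ref{PropC_L}; with $x^*=e_i$ this yields the coordinatewise estimate $\vert\skalp{e_i,w}\vert\leq\epsilon\norm{(x,y)-(\xb,\yb)}\,\norm{(e_i,C_{L^*}e_i)}$, where $\norm{(e_i,C_{L^*}e_i)}^2=1+\norm{C_{L^*}e_i}^2$.

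Finally I would reassemble the coordinates. Writing $\norm{w}^2=\sum_{i=1}^n\skalp{e_i,w}^2$ and inserting the bounds gives
\[\norm{w}^2\leq\epsilon^2\norm{(x,y)-(\xb,\yb)}^2\sum_{i=1}^n\big(1+\norm{C_{L^*}e_i}^2\big).\]
Here $\sum_{i=1}^n\norm{C_{L^*}e_i}^2$ is the squared Frobenius norm of $C_{L^*}$, which is bounded by $n\norm{C_{L^*}}^2$ in the spectral norm, so the sum is at most $n(1+\norm{C_{L^*}}^2)$; taking square roots produces exactly the claimed inequality.

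Since the proof is in essence a single application of \eqref{EqDefSCDSemiSmooth} once the right test vectors are identified, there is no genuine obstacle. The only points requiring a little care are verifying that the pairs $(C_{L^*}e_i,e_i)$ really form a basis of $L^*$ (immediate from $L^*=\rge(C_{L^*},I)$) and the norm bookkeeping in the last step: the passage from the Frobenius quantity $\sum_i\norm{C_{L^*}e_i}^2$ to the spectral norm $\norm{C_{L^*}}$ is precisely what generates the factor $\sqrt{n(1+\norm{C_{L^*}}^2)}$ rather than a sharper Frobenius constant.
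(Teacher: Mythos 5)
Your argument is correct: testing the \SCD semismoothness$^{*}$ inequality \eqref{EqDefSCDSemiSmooth} against the basis $(C_{L^*}e_i,e_i)$ of $L^*=\rge(C_{L^*},I)$, using the identity $\skalp{x^*,C_{L^*}^Tv}=\skalp{y^*,v}$ from Proposition \ref{PropC_L} to recognize the left-hand side as $\vert\skalp{e_i,w}\vert$, and summing over coordinates with the Frobenius-to-spectral bound gives exactly the stated constant $\sqrt{n(1+\norm{C_{L^*}}^2)}$. The paper itself only cites \cite[Proposition 5.3]{GfrOut22a} for this result, and your proof is precisely the argument behind that reference, so there is nothing to add.
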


In order to state a convergence result, we introduce the following set-valued mappings
\begin{gather*}
  \A_{\eta,\xb}(x):=\{(\hat x,\hat y)\in\gph F\mv \norm{(\hat x,\hat y)-(\xb,0)}\leq \eta\norm{x-\xb}\},\\
  \New_{\eta,\xb}(x):=\{\hat x-C_{L^*}^T\hat y\mv (\hat x,\hat y)\in \A_{\eta,\xb}(x), L^*\in\Sp^*F(\hat x,\hat y)\cap \Z_n^{\rm reg}\}.
\end{gather*}
Note that the Newton iteration described above can be shortly written as $\ee x{k+1}\in\New_{\eta,\xb}(\ee xk)$.
\begin{proposition}[{\cite[Proposition 4.3]{GfrOutVal22b}}]\label{PropSingleStep}
  Assume that $F$ is \SCD \ssstar  and \SCD regular at $(\xb,0)\in\gph F$ and let $\eta>0$. Then there is some $\bar\delta>0$  such that for every $x\in \B_{\bar\delta}(\xb)$ the mapping $F$ is \SCD regular at every point $(\hat x,\hat y)\in \A_{\eta,\xb}(x)$. Moreover, for every $\epsilon>0$ there is some $\delta\in(0,\bar\delta]$ such that
  \[\norm{z-\xb}\leq\epsilon\norm{x-\xb}\ \forall x\in \B_\delta(\xb)\ \forall z\in \New_{\eta,\xb}(x).\]
\end{proposition}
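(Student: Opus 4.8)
The plan is to combine three ingredients already available in the excerpt: the key semismooth$^*$ estimate of Proposition \ref{PropConvNewton}, the uniform control of the \SCD regularity modulus near $(\xb,0)$ from Lemma \ref{Lem_scdreg}, and the elementary geometry of the approximation mapping $\A_{\eta,\xb}$. The crucial observation is that points $(\hat x,\hat y)\in\A_{\eta,\xb}(x)$ are forced to lie close to $(\xb,0)$ whenever $x$ is close to $\xb$, since $\norm{(\hat x,\hat y)-(\xb,0)}\leq\eta\norm{x-\xb}$ by definition of $\A_{\eta,\xb}$; this is what lets the local estimates be applied uniformly over the whole approximation step.

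First I would establish the existence of $\bar\delta$. By hypothesis $F$ is \SCD regular at $(\xb,0)$ and has the \SCD property around it (regularity includes the \SCD property at the point, and semismoothness$^*$ gives the \SCD property around it). Lemma \ref{Lem_scdreg} then yields a neighborhood of $(\xb,0)$ on which every graph point is \SCD regular; choosing $\bar\delta>0$ so that $\norm{(\hat x,\hat y)-(\xb,0)}\leq\eta\bar\delta$ forces $(\hat x,\hat y)$ into that neighborhood, I get that $F$ is \SCD regular at every $(\hat x,\hat y)\in\A_{\eta,\xb}(x)$ for all $x\in\B_{\bar\delta}(\xb)$. The same lemma gives a finite bound
\[
\kappa:=\limsup_{(x,y)\longsetto{\gph F}(\xb,0)}{\rm scd\,reg\;}F(x,y)\leq{\rm scd\,reg\;}F(\xb,0)<\infty,
\]
so after possibly shrinking $\bar\delta$ I may assume $\norm{C_{L^*}}\leq\kappa+1$ for every $L^*\in\Sp^*F(\hat x,\hat y)\cap\Z_n^{\rm reg}$ arising from such $(\hat x,\hat y)$; note \SCD regularity guarantees $\Sp^*F(\hat x,\hat y)\subset\Z_n^{\rm reg}$, so $C_{L^*}$ is always well defined here.

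For the main estimate, fix $\epsilon>0$. Any $z\in\New_{\eta,\xb}(x)$ has the form $z=\hat x-C_{L^*}^T\hat y$ with $(\hat x,\hat y)\in\A_{\eta,\xb}(x)$ and $L^*\in\Sp^*F(\hat x,\hat y)\cap\Z_n^{\rm reg}$. Applying Proposition \ref{PropConvNewton} (with $\yb=0$) at the point $(\hat x,\hat y)$ gives, for a tolerance $\epsilon'>0$ to be fixed and a corresponding radius $\delta'>0$,
\[
\norm{z-\xb}=\norm{\hat x-C_{L^*}^T\hat y-\xb}\leq\epsilon'\sqrt{n\,(1+\norm{C_{L^*}}^2)}\,\norm{(\hat x,\hat y)-(\xb,0)},
\]
provided $(\hat x,\hat y)\in\B_{\delta'}(\xb,0)$. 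Using $\norm{C_{L^*}}\leq\kappa+1$ and $\norm{(\hat x,\hat y)-(\xb,0)}\leq\eta\norm{x-\xb}$, the right-hand side is bounded by $\epsilon'\,\eta\sqrt{n\,(1+(\kappa+1)^2)}\,\norm{x-\xb}$. Choosing $\epsilon'$ small enough that $\epsilon'\eta\sqrt{n(1+(\kappa+1)^2)}\leq\epsilon$ delivers the claimed bound $\norm{z-\xb}\leq\epsilon\norm{x-\xb}$. Finally I would take $\delta\in(0,\bar\delta]$ small enough that $\eta\delta\leq\delta'$, which guarantees $(\hat x,\hat y)\in\B_{\delta'}(\xb,0)$ and hence that Proposition \ref{PropConvNewton} applies for every $x\in\B_\delta(\xb)$.

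The main obstacle is purely a matter of bookkeeping the order of quantifiers: the constant $\sqrt{n(1+\norm{C_{L^*}}^2)}$ in Proposition \ref{PropConvNewton} depends on $C_{L^*}$, which varies with the (a priori unknown) choice of $(\hat x,\hat y)$ and $L^*$, so a uniform bound on $\norm{C_{L^*}}$ must be secured \emph{before} fixing $\epsilon'$. This is exactly what Lemma \ref{Lem_scdreg} provides through the upper semicontinuity of the \SCD regularity modulus, and it is the step that prevents the argument from collapsing into a circular dependence between $\epsilon'$, $\delta'$, and the varying constant.
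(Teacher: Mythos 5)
Your proof is correct. The paper does not prove this proposition itself --- it imports it verbatim from \cite[Proposition 4.3]{GfrOutVal22b} --- but your argument is exactly the natural assembly of the two ingredients the paper supplies for this purpose: the uniform bound on $\norm{C_{L^*}}$ near $(\xb,0)$ coming from Lemma \ref{Lem_scdreg} (correctly secured \emph{before} the tolerance $\epsilon'$ is fixed, which is indeed the only delicate quantifier issue), and the key estimate of Proposition \ref{PropConvNewton} applied at the approximation point $(\hat x,\hat y)\in\A_{\eta,\xb}(x)$, whose distance to $(\xb,0)$ is controlled by $\eta\norm{x-\xb}$.
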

It now easily follows that, under the assumptions of Proposition \ref{PropSingleStep}, for every starting point $\ee x0$ sufficiently close to the solution $\xb$ the \SCD \ssstar Newton method is well-defined and any sequence $\ee xk$ produced by this method converges superlinearly to $\xb$.

We also need the following proposition which easily follows from \cite[Theorem 6.2]{GfrOut22a}.
\begin{proposition}\label{Prop_sSR}
  Assume that $F$ is \SCD \ssstar at $(\xb,0) \in\gph F$ and \SCD regular at $(\xb,0)$. Then for every $\kappa>\scdreg F(\xb,0)$ there is some neighborhood $U$ of $\xb$ such that
  \[\norm{x-\xb}\leq \kappa \dist{0,F(x)}\ \forall x\in U,\]
  i.e., $F$ is strongly metrically subregular at $(\xb,0)$, cf. \cite[Chapter 3.9]{DoRo14}.
\end{proposition}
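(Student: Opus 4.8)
The plan is to derive strong metric subregularity of $F$ at $(\xb,0)$ directly from the key Newton estimate in Proposition~\ref{PropConvNewton} together with the uniform bound on $\norm{C_{L^*}}$ furnished by \SCD regularity. Fix $\kappa>\scdreg F(\xb,0)$. By Lemma~\ref{Lem_scdreg} we have $\scdreg F(\xb,0)=\sup\{\norm{C_{L^*}}\mv L^*\in\Sp^*F(\xb,0)\}<\infty$, so there is a constant $M$ with $\scdreg F(\xb,0)\le M<\kappa$, and moreover \SCD regularity persists on a neighborhood of $(\xb,0)$ in $\gph F$ with $\limsup_{(x,y)\to(\xb,0)}\scdreg F(x,y)\le\scdreg F(\xb,0)$. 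I would first use this to select a radius on which $\norm{C_{L^*}}\le M'$ for some $M'\in(M,\kappa)$ uniformly over all $(x,y)\in\gph F$ near $(\xb,0)$ and all $L^*\in\Sp^*F(x,y)\cap\Z_n^{\rm reg}$; since \SCD regularity guarantees every such $L^*$ lies in $\Z_n^{\rm reg}$, the matrix $C_{L^*}$ is well-defined there.

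Next I would invoke Proposition~\ref{PropConvNewton} with $\yb=0$: for every $\epsilon>0$ there is $\delta>0$ such that
\begin{equation}\label{EqPlanKey}
\norm{x-C_{L^*}^T y-\xb}\leq \epsilon\sqrt{n(1+\norm{C_{L^*}}^2)}\,\norm{(x,y)-(\xb,0)}
\end{equation}
for all $(x,y)\in\gph F\cap\B_\delta(\xb,0)$ and all $L^*\in\Sp^*F(x,y)\cap\Z_n^{\rm reg}$. Using the triangle inequality on the left together with the bound $\norm{C_{L^*}^T y}\le M'\norm{y}$ from Proposition~\ref{PropC_L}, this yields
\[\norm{x-\xb}\le M'\norm{y}+\epsilon\sqrt{n(1+{M'}^2)}\,\norm{(x,y)-(\xb,0)}.\]
Choosing $\epsilon$ small enough (relative to the gap $\kappa-M'$) and absorbing the $\norm{x-\xb}$ contribution hidden inside $\norm{(x,y)-(\xb,0)}$ on the right-hand side, one can rearrange to obtain $\norm{x-\xb}\le\kappa\norm{y}$ for all $(x,y)\in\gph F$ near $(\xb,0)$.

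The final step is to convert this pointwise-over-the-graph estimate into the stated bound involving $\dist{0,F(x)}$. For $x$ in a neighborhood $U$ of $\xb$, any $y\in F(x)$ gives a graph point $(x,y)$; by shrinking $U$ I ensure that the relevant $y$ (in particular one nearly attaining the distance to $0$) keeps $(x,y)$ within $\B_\delta(\xb,0)$, so that $\norm{x-\xb}\le\kappa\norm{y}$. Taking the infimum over $y\in F(x)$ then produces $\norm{x-\xb}\le\kappa\,\dist{0,F(x)}$, which is exactly strong metric subregularity at $(\xb,0)$.

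The main obstacle I anticipate is the bookkeeping in the last paragraph: one must guarantee that the element $y\in F(x)$ approximately realizing $\dist{0,F(x)}$ actually lies in the ball $\B_\delta(\xb,0)$ where \eqref{EqPlanKey} and the uniform bound on $\norm{C_{L^*}}$ are valid, and simultaneously that some $L^*\in\Sp^*F(x,y)\cap\Z_n^{\rm reg}$ exists at that graph point. Both are handled by the \SCD-regularity-around property from Lemma~\ref{Lem_scdreg} (which forces $\Sp^*F(x,y)\subset\Z_n^{\rm reg}$ locally) plus an outer-semicontinuity/closedness argument ensuring $\dist{0,F(x)}\to0$ as $x\to\xb$; this is where care is needed, but it is routine given the machinery already assembled.
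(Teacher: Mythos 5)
Your argument is essentially correct, but it takes a genuinely different route from the paper: the paper gives no internal proof at all, simply noting that the proposition ``easily follows from'' Theorem~6.2 of the reference \cite{GfrOut22a}, whereas you assemble a self-contained derivation from Proposition~\ref{PropConvNewton}, Lemma~\ref{Lem_scdreg} and Proposition~\ref{PropC_L}. The core chain is sound: the SCD semismooth$^{*}$ hypothesis supplies the SCD property around $(\xb,0)$, so Lemma~\ref{Lem_scdreg} gives a uniform bound $\norm{C_{L^*}}\le M'<\kappa$ on a graph neighborhood together with $\Sp^*F(x,y)\subset\Z_n^{\rm reg}$ there; combining the Newton estimate with $\norm{C_{L^*}^Ty}\le M'\norm{y}$ and absorbing the $\norm{x-\xb}$ term from $\norm{(x,y)-(\xb,0)}\le\norm{x-\xb}+\norm{y}$ yields $\norm{x-\xb}\le\frac{M'+c}{1-c}\norm{y}\le\kappa\norm{y}$ for small $\epsilon$. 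The one imprecision is in your final step: the claim that $\dist{0,F(x)}\to 0$ as $x\to\xb$ is neither true in general (nothing prevents $F(x)$ from being empty or far from $0$ for $x$ near $\xb$) nor needed, and ``shrinking $U$'' alone cannot force a nearly distance-attaining $y\in F(x)$ into $\B_\delta(\xb,0)$. The standard repair is a case split: if $\dist{0,F(x)}\ge\delta/2$ the target inequality holds trivially once $U\subset\B_{\kappa\delta/2}(\xb)$, while if $\dist{0,F(x)}<\delta/2$ one picks $y\in F(x)$ with $\norm{y}<\delta/2$ arbitrarily close to the infimum, so that $(x,y)\in\B_\delta(\xb,0)$ and the graph estimate applies; letting the approximation tolerance tend to zero finishes the proof. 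With that one-line fix your argument is complete, and it has the advantage over the paper's treatment of making the dependence of $U$ on $\kappa-\scdreg F(\xb,0)$ explicit and of staying entirely within the machinery already developed in Section~2.
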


\section{On the SCD semismooth$^{*}$ Newton step in optimization}

As explained in the previous section, an iteration of the SCD \ssstar Newton method consists of an approximation step and a Newton step. In this section we analyze the Newton step for solving the inclusion $0\in \partial\varphi(x)$ for rather general lsc functions $\varphi:\R^n\to\oR$ more in detail. The discussion of the approximation step is shifted to the next sections, where we will show that the approximation step can be realized by our globalization procedure, i.e., by essentially one step of PGM.

Proposition \ref{PropProxRegularQ} states that the SC derivative of prox-regular and subdifferentially continuous functions $\varphi$ have the favourable property that every subspace $L$ belonging to the SC derivative is self-adjoint, i.e., $L=L^*$.  Since we want to work with such subspaces, we make the  following assumption, which we assume from now on for the rest of this section.
\begin{assumption}\label{AssProx}The set
\[{\rm proxreg\,}\varphi:=\{(x,x^*)\in\gph \partial\varphi\mv \mbox{ $\varphi$ is prox-regular and subdifferentially continuous at $x$ for $x^*$}\}\]
is dense in $\gph\partial\varphi$.
\end{assumption}
\begin{lemma}$\partial \varphi$ is an \SCD mapping.
\end{lemma}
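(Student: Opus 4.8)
The goal is to show that $\partial\varphi$ has the \SCD property at every point of its graph, i.e., $\Sp\partial\varphi(x,x^*)\neq\emptyset$ for all $(x,x^*)\in\gph\partial\varphi$. The plan is to leverage Proposition \ref{PropProxRegularQ} together with Assumption \ref{AssProx} and the compactness of the metric space $\Z_n$.

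First I would fix an arbitrary $(\bar x,\bar x^*)\in\gph\partial\varphi$. By Assumption \ref{AssProx}, the set ${\rm proxreg\,}\varphi$ is dense in $\gph\partial\varphi$, so there exists a sequence $(x_k,x_k^*)\to(\bar x,\bar x^*)$ with $(x_k,x_k^*)\in{\rm proxreg\,}\varphi$, meaning $\varphi$ is prox-regular and subdifferentially continuous at $x_k$ for $x_k^*$. By Proposition \ref{PropProxRegularQ}, for each such $k$ the mapping $\partial\varphi$ has the \SCD property around $(x_k,x_k^*)$, hence in particular $\Sp\partial\varphi(x_k,x_k^*)\neq\emptyset$. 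I would then pick some $L_k\in\Sp\partial\varphi(x_k,x_k^*)$. By definition of the SC derivative, each $L_k$ is a limit (in $\Z_n$) of tangent spaces $T_{\gph\partial\varphi}(x',x'^*)$ at points of graphical smoothness near $(x_k,x_k^*)$; concretely, I can choose $(\tilde x_k,\tilde x_k^*)\in\OO_{\partial\varphi}$ with $d_\Z(L_k,T_{\gph\partial\varphi}(\tilde x_k,\tilde x_k^*))\le 1/k$ and $\norm{(\tilde x_k,\tilde x_k^*)-(x_k,x_k^*)}\le 1/k$, so that $(\tilde x_k,\tilde x_k^*)\to(\bar x,\bar x^*)$ as well.

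Next, since $\Z_n$ is a compact metric space (as recalled after the definition of $\Z_n$, cf. \cite[Lemma 3.1]{GfrOut22a}), the sequence $T_{\gph\partial\varphi}(\tilde x_k,\tilde x_k^*)\in\Z_n$ has a convergent subsequence with some limit $L\in\Z_n$. Passing to this subsequence, I would have $(\tilde x_k,\tilde x_k^*)\longsetto{\OO_{\partial\varphi}}(\bar x,\bar x^*)$ together with $\lim_{k\to\infty}d_\Z(L,T_{\gph\partial\varphi}(\tilde x_k,\tilde x_k^*))=0$. This is exactly the defining condition for $L\in\Sp\partial\varphi(\bar x,\bar x^*)$, so $\Sp\partial\varphi(\bar x,\bar x^*)\neq\emptyset$. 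Since $(\bar x,\bar x^*)$ was arbitrary, $\partial\varphi$ is an \SCD mapping.

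The main subtlety here is the bookkeeping with the two-layer approximation: points of ${\rm proxreg\,}\varphi$ need not themselves lie in $\OO_{\partial\varphi}$, so one cannot directly take tangent spaces there. The role of Proposition \ref{PropProxRegularQ} is precisely to guarantee that the SC derivative is nonempty at the dense set of prox-regular points, and then a diagonal argument combined with the compactness of $\Z_n$ transfers nonemptiness to the arbitrary limit point. I expect this diagonalization — ensuring the approximating graphical-smoothness points converge to $(\bar x,\bar x^*)$ while simultaneously controlling $d_\Z$ — to be the only place requiring care; everything else is a direct invocation of the cited results.
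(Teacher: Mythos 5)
Your proof is correct and follows essentially the same route as the paper: density of ${\rm proxreg\,}\varphi$, Proposition \ref{PropProxRegularQ} to get nonempty SC derivatives along the approximating sequence, and compactness of $\Z_n$ to extract a limit subspace. The only difference is that the paper delegates the final step (that a limit of subspaces $L_k\in\Sp\partial\varphi(x_k,x_k^*)$ with $(x_k,x_k^*)\to(\xb,\xba)$ belongs to $\Sp\partial\varphi(\xb,\xba)$) to a citation of \cite[Lemma 3.13]{GfrOut22a}, whereas you prove it inline with the $1/k$-diagonal argument, which is a valid unpacking of that lemma.
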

\begin{proof}
  Consider an arbitrary pair $(\xb,\xba)\in\gph \partial\varphi$. By Assumption \ref{AssProx},  there exists a sequence $(x_k,x_k^*)$ in ${\rm proxreg\,}\varphi$ converging to $(\xb,\xba)$  and, by Proposition \ref{PropProxRegularQ}, for every $k$ there is some $L_k\in\Sp\partial\varphi(x_k,x_k^*)$. The metric space $\Z_n$ is compact and therefore the sequence $L_k$ has a limit point $\bar L\in\Z_n$. Since $(x_k,x_k^*)\to(\xb,\xba)$, it follows that $\bar L\in\Sp\partial \varphi(\xb,\xba)\not=\emptyset$ by \cite[Lemma 3.13]{GfrOut22a}. Since this holds for arbitrary $(\xb,\xba)\in\gph \partial\varphi$, the subdifferential $\partial \varphi$ is an \SCD mapping.
\end{proof}
In order to implement the SCD \ssstar Newton step, we need a basis for the selected subspace $\ee Lk$. Instead of the basis representation provided by Proposition \ref{PropProxRegularQ} we will use another one, which seems to be more appropriate.
\begin{definition}\label{DefZnP}
  We denote by $\ZnP$ the collection of all subspaces $L\in\Z_n$ such that there are symmetric $n\times n$ matrices $P$ and $W$ with the following properties:
  \begin{enumerate}
  \item[(i)] $L=\rge(P,W)$,
  \item[(ii)] $P^2=P$, i.e.. $P$ represents the orthogonal projection onto some subspace of $\R^n$,
  \item[iii)] $W(I-P)=I-P$.
  \end{enumerate}
\end{definition}
Note that for symmetric matrices $W,P$ fulfilling conditions (ii) and (iii) of the above definition  we have
\begin{gather}\label{EqPW1}(I-P)W=\big(W(I-P)\big)^T=(I-P)^T=I-P=W(I-P),\\
\label{EqPW2}PW=WP=PWP,\\
\label{EqPW3}W=WP+W(I-P)=PWP+(I-P).
\end{gather}
\begin{lemma}\label{LemSelfadj}
  Every subspace $L\in \ZnP$ is self-adjoint, i.e., $L=L^*$.
\end{lemma}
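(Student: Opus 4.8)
The plan is to reduce the claim to elementary linear algebra, working directly from the definition $L=\rge(P,W)=\{(Pu,Wu)\mv u\in\R^n\}$ and the definition of the adjoint subspace, and then exploiting the commutation identity \eqref{EqPW2}. The only genuine computation is a cancellation of cross terms; the passage from inclusion to equality will be handled by a dimension count rather than a second direct verification.

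First I would compute $L^\perp$. Since $P$ and $W$ are symmetric, a pair $(u^*,v^*)$ satisfies $(u^*,v^*)\perp(Pu,Wu)$ for all $u$ precisely when $\skalp{Pu^*+Wv^*,u}=0$ for all $u\in\R^n$, i.e. when $Pu^*+Wv^*=0$. Substituting this description into
\[L^*=\{(-v^*,u^*)\mv (u^*,v^*)\in L^\perp\}\]
yields $L^*=\{(-v^*,u^*)\mv Pu^*+Wv^*=0\}$. Next I would prove $L\subseteq L^*$: given a generator $(Pu,Wu)$ of $L$, I set $(u^*,v^*):=(Wu,-Pu)$ and check that it lies in $L^\perp$, since the defining relation becomes $Pu^*+Wv^*=PWu-WPu$, which vanishes by $PW=WP$ (equation \eqref{EqPW2}); and then $(-v^*,u^*)=(Pu,Wu)$, so the generator indeed belongs to $L^*$.

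Finally I would close the argument by dimension. By hypothesis $L\in\Z_n$, so $\dim L=n$; and because the map $L\mapsto L^*$ is an isometry of $\Z_n$ with $(L^*)^*=L$, we have $L^*\in\Z_n$ as well, hence $\dim L^*=n$. The inclusion $L\subseteq L^*$ together with $\dim L=\dim L^*$ then forces $L=L^*$, as claimed.

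I do not anticipate a real obstacle here: the crux is simply the cancellation $PWu-WPu=0$, i.e. the identity $PW=WP=PWP$ recorded in \eqref{EqPW2}, which is an immediate consequence of properties (ii) and (iii) in Definition \ref{DefZnP} (one may also note that (ii)--(iii) already guarantee $(P,W)\in\M_n$, so that $\rge(P,W)$ is automatically $n$-dimensional). The only mildly delicate point is to avoid verifying the reverse inclusion $L^*\subseteq L$ by hand; replacing it with the equality of dimensions keeps the proof short.
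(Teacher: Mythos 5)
Your proof is correct and follows essentially the same route as the paper's: both hinge on the cancellation $PW=WP$ from \eqref{EqPW2} to establish the inclusion $L\subseteq L^*$ (the paper phrases it as $\rge(W,-P)\subset L^\perp$, which is the same statement after applying the map $(u^*,v^*)\mapsto(-v^*,u^*)$), and both upgrade the inclusion to equality by a dimension count. No issues.
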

\begin{proof}
  Let $(P,W)\in \M(L)$ be according to Definition \ref{DefZnP}. For every $(u,v)\in L$ there is some $p\in\R^n$ with $(u,v)=(Pp,Wp)$ implying
  \[\skalp{Ws,u}+\skalp{-Ps,v}= \skalp{Ws,Pp}-\skalp{Ps,Wp}=\skalp{s,(WP-PW)p}=0\ \forall s\in\R^n\]
  by symmetry of $P$, $W$ and \eqref{EqPW2}. Hence, $\rge(W,-P)\subset L^\perp$. Further, since $L=\rge(P,W)$ is an $n$-dimensional subspace, so are $L^\perp$ and $\rge(W,-P)$ as well and we conclude $L^\perp=\rge(W,-P)$. Now the equality $L=L^*$ follows from the definition of $L^*$.
\end{proof}
\begin{lemma}
  For every $L\in\ZnP$ there exists exactly one pair $(P,W)$ of symmetric $n\times n$ matrices fulfilling properties (i), (ii) and (iii) of Definition \ref{DefZnP}.
\end{lemma}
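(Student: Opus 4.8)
Existence is immediate from the definition of $\ZnP$, so the whole content of the statement is uniqueness. The plan is to show that both $P$ and $W$ can be reconstructed intrinsically from $L$ alone; any two admissible pairs must then coincide. Throughout I fix an admissible pair $(P,W)$ with $L=\rge(P,W)$ and recall that, by the definition of $\rge(\cdot,\cdot)$, every element of $L$ has the form $(Pu,Wu)$ for some $u\in\R^n$.

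The first and decisive step is to recover the projection $P$ from $L$. I claim that
\[
L\cap(\{0\}\times\R^n)=\{0\}\times\ker P.
\]
For the inclusion ``$\subseteq$'', take $(0,v)=(Pu,Wu)\in L$. Then $Pu=0$, so $u=(I-P)u$, and property (iii) of Definition \ref{DefZnP} gives $Wu=W(I-P)u=(I-P)u=u$; hence $v=u\in\ker P$. The reverse inclusion is clear, since for $v\in\ker P$ the same computation yields $(Pv,Wv)=(0,v)\in L$. This identity characterizes $\ker P$ purely in terms of $L$, and because $P$ is a symmetric idempotent it is the orthogonal projection onto $(\ker P)^\perp=\rge P$; thus $P$ is uniquely determined by $L$.

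With $P$ now fixed, the second step recovers $W$. By \eqref{EqPW3} we have $W=PWP+(I-P)$, so it suffices to determine the block $PWP$. Here I use \eqref{EqPW2}: for any element $(x,y)=(Pu,Wu)\in L$ one computes $Py=PWu=PWPu=PWP(Pu)=PWP\,x$, using $PW=PWP$ and $P^2=P$. Since the set of first coordinates of $L$ is exactly $\rge P$, for every $x\in\rge P$ there is some $(x,y)\in L$, and then $PWP\,x=Py$; on $\ker P$ the matrix $PWP$ vanishes. This pins down $PWP$ on all of $\R^n=\rge P\oplus\ker P$ in terms of $L$ and $P$, and hence determines $W$.

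Combining the two steps gives the claim: if $(P_1,W_1)$ and $(P_2,W_2)$ are both admissible for $L$, then Step~1 forces $\ker P_1=\ker P_2$ and hence $P_1=P_2=:P$, and Step~2 then forces $PW_1P=PW_2P$, whence $W_1=W_2$ by \eqref{EqPW3}. The only genuinely non-routine point is the intrinsic characterization of $\ker P$ as $\{v\mv(0,v)\in L\}$ in Step~1; once that is in hand, the identities \eqref{EqPW1}--\eqref{EqPW3} reduce everything to straightforward bookkeeping.
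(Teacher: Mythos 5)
Your proof is correct, but it reaches the conclusion by a slightly different route than the paper. For the uniqueness of $P$, the paper simply observes that $P_1$ and $P_2$ are orthogonal projections onto the same subspace (implicitly, the image of $L$ under the first-coordinate projection, which equals $\rge P_i$ for either representation), whereas you characterize $\ker P$ intrinsically as the fiber $L\cap(\{0\}\times\R^n)$ — a correct computation using (iii), and arguably a more explicit justification of the step the paper leaves tacit. For the uniqueness of $W$, the paper argues at the level of parametrizations: given $Pz_1=Pz_2$ and $W_1z_1=W_2z_2$, it applies $(I-P)$ and uses \eqref{EqPW1} to force $z_1=z_2$, hence $W_1=W_2$. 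You instead reconstruct the matrix itself, reading off $PWP$ from the graph relation $Py=PWPx$ for $(x,y)\in L$ (valid since the first coordinates of $L$ fill out $\rge P$ and $PWP$ vanishes on $\ker P$) and then recovering $W=PWP+(I-P)$ via \eqref{EqPW3}. Both arguments are short and rest on the same identities \eqref{EqPW1}--\eqref{EqPW3}; what yours buys is an explicit intrinsic formula for $(P,W)$ in terms of $L$, while the paper's is a more economical ``two representations must coincide'' argument. No gaps.
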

\begin{proof}Consider two pairs $(P_i,W_i)$, $i=1,2$, fulfilling (i)--(iii). The matrices $P_1$ and $P_2$ represent the orthogonal projections onto the same subspace and thus $P_1=P_2=:P$. Since $\rge(P,W_1)=\rge(P,W_2)$, for every $z_1$ there is some $z_2$ with $Pz_1=Pz_2$ and $W_1z_1=W_2z_2$. Hence, by using \eqref{EqPW1}, we have
\[(I-P)z_1=(I-P)W_1z_1=(I-P)W_2z_2=(I-P)z_2\]
and together with $Pz_1=Pz_2$ the equality $z_1=z_2$ follows. Hence $W_1=W_2$.
\end{proof}
\begin{lemma}
  $\ZnP$ is closed in $\Z_n$.
\end{lemma}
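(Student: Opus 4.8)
The plan is to identify $\ZnP$ with the set of all \emph{self-adjoint} subspaces in $\Z_n$ and then exploit that $L\mapsto L^*$ is an isometry. By Lemma~\ref{LemSelfadj} we already have the inclusion $\ZnP\subseteq\{L\in\Z_n\mv L=L^*\}$, so the heart of the argument is the reverse inclusion: every self-adjoint $L$ admits a representation $(P,W)$ as in Definition~\ref{DefZnP}. Once this is established, closedness is immediate, because $\ZnP=\{L\in\Z_n\mv d_\Z(L,L^*)=0\}$ is the zero set of the continuous map $L\mapsto d_\Z(L,L^*)$, continuity of $L\mapsto L^*$ being exactly the stated isometry $d_\Z(L_1,L_2)=d_\Z(L_1^*,L_2^*)$.

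To construct $(P,W)$ for a given self-adjoint $L$, I would let $V:=\{u\mv (u,v)\in L\text{ for some }v\}$ be the projection of $L$ onto the first factor and take $P$ to be the orthogonal projection onto $V$. Everything rests on the elementary reformulation of self-adjointness $(x,y)\in L\ \Leftrightarrow\ (y,-x)\in L^\perp$, which follows directly from the definition of $L^*$ together with $L=L^*$. Applying it with $x=0$ gives the first key fact, $\{v\mv (0,v)\in L\}=V^\perp$; consequently the projection $(u,v)\mapsto u$ of $L$ onto $V$ has kernel $\{0\}\times V^\perp$, so $\dim V+\dim V^\perp=n=\dim L$ is consistent and, more importantly, the assignment $Sv:=Py$ for any $(v,y)\in L$ is a well-defined linear map $S\colon V\to V$ (different choices of $y$ differ by an element of $V^\perp$, on which $P$ vanishes). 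The second key fact is that $S$ is symmetric: for $(v_1,y_1),(v_2,y_2)\in L$ the reformulation yields $(y_1,-v_1)\in L^\perp$, hence $\skalp{y_1,v_2}=\skalp{v_1,y_2}$, which is precisely $\skalp{Sv_1,v_2}=\skalp{v_1,Sv_2}$.

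With $S$ in hand I would extend it to the symmetric operator $\hat S$ on $\R^n$ that agrees with $S$ on $V$ and vanishes on $V^\perp$ (so $\hat S=P\hat SP$), and set $W:=\hat S+(I-P)$. Then $W$ is symmetric, $P^2=P$, and $W(I-P)=I-P$ because $\hat S(I-P)=0$; finally, decomposing $p=Pp+(I-P)p$ and invoking the two key facts shows $\rge(P,W)=\{(v,\hat Sv+w)\mv v\in V,\ w\in V^\perp\}=L$, so $L\in\ZnP$. The main obstacle is precisely this construction step, and within it the two facts about self-adjoint subspaces. The subtlety worth flagging is that the naive sequential approach, namely taking $L_k=\rge(P_k,W_k)\to L$ and passing to limits in the matrices, fails: the $P_k$ stay bounded as projections, but the $W_k$ need not remain bounded, which is exactly why I route the proof through the coordinate-free characterization of self-adjointness rather than through the matrix data.
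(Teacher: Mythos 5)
Your proof is correct, but it takes a genuinely different route from the paper. The paper argues sequentially: it takes $L_k=\rge(P_k,W_k)\to L$, simultaneously diagonalizes the commuting matrices $P_k,W_k$, and then copes with the possible unboundedness of $W_k$ by rescaling the basis columns (dividing by the diverging eigenvalues $e^k_j$) before passing to the limit — precisely the obstacle you flag at the end. You instead upgrade Lemma \ref{LemSelfadj} to an equivalence, $\ZnP=\{L\in\Z_n\mv L=L^*\}$, after which closedness is the trivial observation that this is the zero set of the continuous function $L\mapsto d_\Z(L,L^*)$. Your construction of $(P,W)$ from a self-adjoint $L$ checks out: the identity $(x,y)\in L\Leftrightarrow(y,-x)\in L^\perp$ gives both $\{v\mv(0,v)\in L\}=V^\perp$ (so $S v:=Py$ is well defined on $V$) and the symmetry $\skalp{y_1,v_2}=\skalp{v_1,y_2}$, and the resulting $W=\hat S+(I-P)$ satisfies (ii), (iii) of Definition \ref{DefZnP} with $\rge(P,W)=\{(v,Sv+w)\mv v\in V,\ w\in V^\perp\}\supseteq L$, whence equality by dimension count. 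Your approach buys a cleaner conceptual statement — a coordinate-free characterization of $\ZnP$ that is of independent interest and makes closedness structural rather than computational — at the cost of a construction argument; the paper's approach stays entirely within the matrix representation it needs later, but has to do the delicate rescaling bookkeeping by hand.
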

\begin{proof}
  Consider a sequence $L_k$ in $\ZnP$ and assume that $L_k$ converges to some $L\in \Z_n$. In order to prove the lemma, we have to show $L\in\ZnP$. Let $(P_k,W_k)$ be the symmetric $n\times n$ matrices associated with $L_k$ according to Definition \ref{DefZnP}. By \eqref{EqPW2}, for each $k$ the symmetric matrices $P_k$ and $W_k$ commute and can thus be diagonalized with the same orthogonal matrix $Q_k$, i.e., there are diagonal matrices $D^k={\rm diag}(d^k_1,\ldots, d^k_n)$ and $E^k={\rm diag}(e^k_1,\ldots, e^k_n)$ such that $P_k=Q_kD^kQ_k^T$ and $W_k=Q_kE^kQ_k^T$. Since $P_k$ represents an orthogonal projection, we have $d^k_j\in\{0,1\}$, $j=1,\ldots,n$ and from
  \[W_k(I-P_k)=Q_kE^kQ_k^T( I -Q_kD^kQ_k^T)= Q_k E^k(I-D^k)Q_k^T=I-P_k=Q_k( I-D^k)Q_k^T\]
   we deduce that $E^k(I-D^k)=I-D^k$ implying  $e^k_j(1-d^k_j)=1-d^k_j$, $j=1,\ldots,n$. By passing to a subsequence we may assume  that the orthogonal matrices $Q_k$ converge to some orthogonal matrix $Q$, the diagonal matrices $D^k$ are constant $D^k\equiv D$ for all $k$ and the diagonal matrices $E^k$ converge to some diagonal matrix $E$ with entries $e_j\in\R\cup\{\pm \infty\}$. Let $I:=\{j\in\{1,\ldots,n\}\mv e_j\in\R\}$ and for each $k$ sufficiently large let $S^k$ denote the diagonal matrix with elements
   \[s_j^k:=\begin{cases}1&\mbox{if $j\in I$,}\\\frac 1{e_j^k}&\mbox{if $j\not\in I$.}\end{cases}\]
    Then
    \begin{equation}\label{EqL_k}L_k=\rge(P_k,W_k)=\rge(Q_kD^kQ_k^T,Q_kE^kQ_k^T)=\rge(Q_kD^k,Q_kE^k)=\rge(Q_kD^kS^k,Q_kE^kS^k)\end{equation}
    and the diagonal matrices $D^kS^k$ and $E^kS^k$ converge to diagonal matrices $\tilde D={\rm diag}(\tilde d_1,\ldots,\tilde d_n\}$ and $\tilde E={\rm diag}(\tilde e_1,\ldots,\tilde e_n\}$, respectively, satisfying
    \[\tilde d_j=\begin{cases}
      d_j\in\{0,1\}&\mbox{if $j\in I$,}\\0&\mbox{if $j\not\in I$},
    \end{cases}\qquad
    \tilde e_j=\begin{cases}e_j&\mbox{if $j\in I$,}\\1&\mbox{if $j\not\in I$.}
    \end{cases}\]
    Now consider the symmetric matrices $P:=Q\tilde D Q^T$ and $W:=Q\tilde E Q^T$. Since $\tilde d_j\in\{0,1\}$, $j=1,\ldots,n$, it follows that $P^2=Q\tilde D^2Q^T=Q\tilde D Q^T=P$.
    For every $j\in I$ we have $\tilde e_j(1-\tilde d_j)=e_j(1-d_j)=\lim_{k\to\infty}e_j^k(1-d_j^k)=\lim_{k\to\infty}1-d_j^k=1-\tilde d_j$ and for every $j\not\in I$ there holds $\tilde e_j(1-\tilde d_j)=(1-\tilde d_j)=1$. Thus $\tilde E(I-\tilde D)=I-\tilde D$ implying
    \[W(I-P)=Q\tilde E Q^TQ(I-\tilde D)Q^T=Q\tilde E(I-\tilde D)Q^T=Q(I-\tilde D)Q^T=I-P.\]
    By \cite[Lemma 3.1]{GfrOut22a}, we obtain from \eqref{EqL_k} together with $\lim_{k\to \infty}L_k=L$  that
    \[L=\rge(Q\tilde D, Q \tilde E)=\rge(Q\tilde DQ^T,Q\tilde E Q^T)=\rge(P,W).\]
    This shows that $L\in\Z_n^{P,W}$ and the lemma is proved.
\end{proof}
\begin{proposition}\label{PropPW_Basis}For every $(x,x^*)\in{\rm proxreg\,}\varphi$ there holds $\emptyset\not=\Sp\partial\varphi(x,x^*)\subset \ZnP$.
  Further, for every $(x,x^*)\in\gph \varphi$ one has
  \begin{equation}\label{EqSpPW}\Sp_{P,W}\partial \varphi(x,x^*):=\Sp\partial\varphi(x,x^*)\cap \ZnP=\Sp^*\partial\varphi(x,x^*)\cap \ZnP=:\Sp^*_{P,W}\partial \varphi(x,x^*)\not=\emptyset.\end{equation}
\end{proposition}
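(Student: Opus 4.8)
The plan is to establish the three assertions in turn: the inclusion $\Sp\partial\varphi(x,x^*)\subset\ZnP$ on ${\rm proxreg\,}\varphi$, the equality of the two intersections at a general graph point, and their nonemptiness. For the inclusion, fix $(x,x^*)\in{\rm proxreg\,}\varphi$ and apply Proposition \ref{PropProxRegularQ} with $(\xb,\xba)=(x,x^*)$; since the conclusion there holds for all graph points sufficiently close to the base point, in particular at the base point itself, I obtain $\Sp\partial\varphi(x,x^*)=\Sp^*\partial\varphi(x,x^*)\neq\emptyset$ and, for each $L$ in this set, a symmetric positive semidefinite matrix $B$ and a $\lambda>0$ with $L=\rge(B,\frac1\lambda(I-B))$. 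It then remains to produce symmetric $P,W$ as in Definition \ref{DefZnP} with $\rge(P,W)=L$. I would diagonalize $B=Q\Lambda Q^T$ with $\Lambda={\rm diag}(\mu_1,\dots,\mu_n)$, $\mu_j\geq0$, and set $P:=QDQ^T$ and $W:=QEQ^T$, where $D={\rm diag}(d_j)$ with $d_j=1$ if $\mu_j>0$ and $d_j=0$ otherwise (so $P$ is the orthogonal projection onto $\rge B$), and $E={\rm diag}(e_j)$ with $e_j=(1-\mu_j)/(\lambda\mu_j)$ if $\mu_j>0$ and $e_j=1$ if $\mu_j=0$. Then $P^2=P$ is immediate, and the entrywise identity $e_j(1-d_j)=1-d_j$ (valid in both cases) gives $E(I-D)=I-D$ and hence $W(I-P)=I-P$, so properties (i)--(iii) hold.

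The one computational point is the range identity $\rge(P,W)=L$. Writing both subspaces via their generating $2n\times n$ matrices and using that right multiplication by an invertible matrix leaves the range unchanged (to strip the trailing $Q^T$), I would compare the columns of $(Q\Lambda,\frac1\lambda Q(I-\Lambda))$ and $(QD,QE)$ indexed by the orthonormal eigenvectors $q_j$: for $\mu_j>0$ the former column $(\mu_j q_j,\frac1\lambda(1-\mu_j)q_j)$ equals $\mu_j$ times the latter $(q_j,e_j q_j)$, while for $\mu_j=0$ the former column $(0,\frac1\lambda q_j)$ equals $\frac1\lambda$ times the latter $(0,q_j)$. As the scaling factors are nonzero and the columns are mutually orthogonal (hence independent), the two matrices share the same range, so $L\in\ZnP$, proving $\Sp\partial\varphi(x,x^*)\subset\ZnP$.

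For the equality $\Sp\partial\varphi(x,x^*)\cap\ZnP=\Sp^*\partial\varphi(x,x^*)\cap\ZnP$ at an arbitrary $(x,x^*)\in\gph\partial\varphi$, I would invoke Lemma \ref{LemSelfadj}, which asserts $L=L^*$ for every $L\in\ZnP$, together with the involution $(L^*)^*=L$. If $L\in\Sp\partial\varphi(x,x^*)\cap\ZnP$, then $L=L^*\in\Sp^*\partial\varphi(x,x^*)$ by the very definition of $\Sp^*$; conversely, if $L\in\Sp^*\partial\varphi(x,x^*)\cap\ZnP$, say $L=M^*$ with $M\in\Sp\partial\varphi(x,x^*)$, then $M=(M^*)^*=L^*=L\in\Sp\partial\varphi(x,x^*)$. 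In both directions $L$ remains in $\ZnP$, so the two intersections coincide.

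Finally, for nonemptiness at an arbitrary $(x,x^*)\in\gph\partial\varphi$, I would mirror the earlier proof that $\partial\varphi$ is an \SCD mapping, upgraded by the closedness of $\ZnP$. By Assumption \ref{AssProx} choose $(x_k,x_k^*)\in{\rm proxreg\,}\varphi$ with $(x_k,x_k^*)\to(x,x^*)$, and by the inclusion already established pick $L_k\in\Sp\partial\varphi(x_k,x_k^*)\subset\ZnP$. Compactness of $\Z_n$ yields a subsequence $L_k\to\bar L$; closedness of $\ZnP$ gives $\bar L\in\ZnP$, and \cite[Lemma 3.13]{GfrOut22a} gives $\bar L\in\Sp\partial\varphi(x,x^*)$. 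Thus $\bar L$ lies in both intersections, which settles nonemptiness. I expect the genuinely computational obstacle to be only the range identity of the first two paragraphs; the equality and nonemptiness reduce to the self-adjointness/closedness machinery and a compactness argument already rehearsed in this section.
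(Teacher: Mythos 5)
Your proof is correct and follows essentially the same route as the paper: invoke Proposition \ref{PropProxRegularQ} to get $L=\rge\big(B,\frac1\lambda(I-B)\big)$, convert this to a $(P,W)$-basis with $P$ the orthogonal projection onto $\rge B$, deduce the equality of the two intersections from Lemma \ref{LemSelfadj}, and obtain nonemptiness at general graph points via compactness of $\ZnP$ and \cite[Lemma 3.13]{GfrOut22a}. The only difference is presentational: you build $P$ and $W$ through the spectral decomposition of $B$, whereas the paper uses the Moore--Penrose inverse ($P=BB^\dag$, $W=\frac1\lambda(I-B)B^\dag+I-P$); these yield exactly the same matrices.
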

\begin{proof}
  Consider $(x,x^*)\in{\rm proxreg\,}\varphi$, $L\in\Sp\partial\varphi(x,x^*)\not=\emptyset$, $\lambda>0$ and the symmetric positive semidefinite matrix $B$ according to Proposition \ref{PropProxRegularQ} such that $L=\rge\big(B,\frac 1\lambda(I-B)\big)$. Let $P=BB^\dag$ be the orthogonal projection onto $\rge B$,  where $B^\dag$ denotes the Moore-Penrose inverse of $B$, see, e.g. \cite{Pen55}. Since $BB^\dag=B^\dag B=P$ and $B(I-P)=(I-P)B^\dag=0$, we obtain that
  \[\big(B+\frac1\lambda(I-P)\big)\big(B^\dag+\lambda(I-P)\big)=BB^\dag+(I-P)=I\]
  showing that $B^\dag+\lambda(I-P)$ is nonsingular. Hence
  \[L=\rge\big( B,\frac 1\lambda(I-B)\big)=\rge \big(B(B^\dag+\lambda(I-P)),\frac 1\lambda(I-B)(B^\dag+\lambda(I-P))\big)=\rge\big(P,\frac 1\lambda(I-B)B^\dag+I-P\big)\]
  Since $B^\dag$ is symmetric, the matrix $W:=\frac 1\lambda(I-B)B^\dag+I-P$ is symmetric as well and together with $\big(\frac 1\lambda(I-B)B^\dag+I-P\big)(I-P)=I-P$ we conclude that $L\in\ZnP$. Thus $\Sp\partial\varphi(x,x^*)\subset \ZnP$. In order to show that $\Sp_{P,W}\partial\varphi(x,x^*)\not=\emptyset$ for every $(x,x^*)\in\gph\partial\varphi$, consider $(x,x^*)\in\gph\partial\varphi$ together with  sequences $(x_k,x_k^*)$ in ${\rm proxreg\,}\varphi$ converging to $(x,x^*)$ and $L_k\in\Sp\partial\varphi(x_k,x_k^*)\subset\Z_n^{P,W}$. Since $\ZnP$ is compact as a closed subset of the compact metric space $\Z_n$, the sequence $L_k$ has a limit point $\bar L\in \ZnP$ and, by \cite[Lemma 3.13]{GfrOut22a}, $\bar L$ also belongs to $\Sp\partial \varphi(x,x^*)$ proving $\Sp_{P,W}\partial \varphi(x,x^*)\not=\emptyset$. The equality $\Sp_{P,W}\partial \varphi(x,x^*)=\Sp_{P,W}^*\partial \varphi(x,x^*)$ is an immediate consequence of the definition of $\Sp^*\partial\varphi$ and Lemma \ref{LemSelfadj}.
\end{proof}
\begin{remark}
  In many cases we can prove $\Sp_{P,W}\partial \varphi=\Sp\partial\varphi$, e.g., for prox-regular and subdifferentially continuous functions. In fact, we do not know any proper lsc function $\varphi$ fulfilling Assumption \ref{AssProx} such that $\Sp_{P,W}\partial \varphi(x,x^\ast)\not=\Sp\partial\varphi(x,x^*)$ for some $(x,x^*)\in\gph\partial\varphi$. For the purpose of applying the semismooth$^{*}$ Newton method this issue is of no relevance.
\end{remark}
If $\varphi$ is twice Fr\'echet differentiable at $x$, then
\[T_{\gph \nabla\varphi}(x,\nabla \varphi(x))=\rge(I,\nabla^2\varphi(x))\]
implying  $\rge(I,\nabla^2 \varphi(x))\in \Sp_{P,W} \nabla\varphi(x,\nabla \varphi(x))$ with $P=I$ and $W=\nabla^2 \varphi(x)$.

\begin{example}
  Consider the $\ell_1$ norm $\norm{x}_1=\sum_{i=1}^n\vert x_i\vert$ on $\R^n$. Then
  \[\partial \norm{x}_1=\partial \vert x_1\vert\times\partial \vert x_2\vert\times\ldots\times\partial \vert x_n\vert\]
  and
  \[\Sp\partial \norm{\cdot}_1(x,x^*)=\Big\{\big\{(u,u^*)\in\R^n\times\R^n\mv (u_i,u_i^*)\in L_i,\ i=1,\ldots,n\big\}\Bmv L_i\in\Sp \partial\vert\cdot\vert(x_i,x_i^*),\ i=1,\ldots,n\Big\}\]
  by \cite[Lemma 3.8]{GfrOutVal22b}. Since $\gph \partial \vert\cdot\vert=\big(\R_-\times \{-1\}\big)\cup\big(\{0\}\times[-1,1]\big)\cup\big(\R_+\times\{+1\}\big)$, we obtain
  \[\OO_{\partial\vert\cdot\vert}=\big(\R_{--}\times \{-1\}\big)\cup\big(\{0\}\times(-1,1)\big)\cup\big(\R_{++}\times\{1\}\big)\]
  and
    \[\Sp \partial\vert\cdot\vert(t,t^*)=\begin{cases}\{\R\times \{0\}\}&\mbox{if $(t,t^*)\in \big(\R_{--}\times \{-1\}\big)\cup \big(\R_{++}\times\{+1\}\big)$,}\\
    \{\{0\}\times\R\}&\mbox{if $(t,t^*)\in\{0\}\times(-1,1)$.}\end{cases}\]
  For the remaining two points  we must take limits of subspaces, which is an easy task in this case, and we obtain
  \[ \Sp \partial\vert\cdot\vert(0, \pm 1)=\{\R\times \{0\}, \{0\}\times\R\}.\]
  It follows that every $L\in \Sp \partial\norm{\cdot}_1(x,x^*)$ has the representation $L=\rge(P,I-P)$, where $P$ is a diagonal matrix with diagonal entries belonging to $\{0,1\}$.
\end{example}

\begin{example}\label{ExEll_q}
  Consider the  function $\norm{x}_q^q: =\sum_{i=1}^n\vert x_i\vert^q$ for $q\in(0,1)$. Then
  \[\partial \norm{x}_q^q=\partial \vert \cdot\vert^q(x_1)\times\partial \vert \cdot\vert^q(x_2)\times\ldots\times\partial \vert \cdot\vert^q(x_n)\]
  and
  \[\partial \vert \cdot\vert^q(t)=\begin{cases}q\vert t\vert ^{q-1}{\rm sign\,}(t)&\mbox{if $t\not=0$,}\\ \R&\mbox{if $t=0$.}\end{cases}\]
  Hence
  \[\Sp \partial\vert\cdot\vert^q(t,t^*)=\begin{cases}\{\rge(1,q(q-1)\vert t\vert^{q-2})\}&\mbox{if $t\not=0$,}\\
  \{\rge(0,1)\}&\mbox{if $t=0$.}\end{cases}\]
   Again, by \cite[Lemma 3.8]{GfrOutVal22b} we have
  \[\Sp\partial \norm{\cdot}_q^q(x,x^*)=\Big\{\big\{(u,u^*)\in\R^n\times\R^n\mv (u_i,u_i^*)\in L_i,\ i=1,\ldots,n\big\}\Bmv L_i\in\Sp \partial\vert\cdot\vert^q(x_i,x_i^*),\ i=1,\ldots,n\Big\}\]
  and thus $\Sp\partial \norm{\cdot}_q^q(x,x^*)$ consists of the single subspace $\rge(P,W)$ with diagonal matrices $P,W$ satisfying
  \[P_{ii}=\begin{cases}1&\mbox{if $x_i\not=0$,}\\
  0&\mbox{if $x_i=0$,}\end{cases}\quad W_{ii}=\begin{cases}q(q-1)\vert x_i\vert^{q-2}&\mbox{if $x_i\not=0$,}\\
  1&\mbox{if $x_i=0$.}\end{cases}\]
\end{example}

\begin{example}
  Consider the $\ell_0$ pseudo-norm $\norm{x}_0=\sum_{i=1}^n \xi(x_i)$ on $\R^n$, where
  \[\xi(t):=\begin{cases}1&\mbox{if $t\not=0$},\\0&\mbox{if $t=0$.}\end{cases}\]
  By \cite[Proposition 10.5]{RoWe98} we have
  \[\partial \norm{x}_0=\partial\xi(x_1)\times\partial\xi(x_2)\times\ldots\times\partial\xi(x_n).\]
  Since $\gph \partial\xi =\big(\R\times\{0\}\big)\cup \big(\{0\}\times\R\big)$, we obtain that
  \[\OO_{\partial\xi}=\big((\R\setminus\{0\})\times\{0\}\big)\cup \big(\{0\}\times(\R\setminus\{0\})\big)\]
  and
  \[\Sp\partial\xi(t,t^*)=\begin{cases}\big\{\R\times\{0\}\big\}&\mbox{if $t\in\R\setminus\{0\}$, $t^*=0$,}\\
  \big\{\{0\}\times\R\big\}&\mbox{if $t=0$, $t^*\in\R\setminus\{0\}$.}\end{cases}\]
  To obtain the SC derivative at $(0,0)$, we must again perform a limiting process, which results in
  \[\Sp\partial\xi(0,0)=\big\{\R\times\{0\},\{0\}\times\R\}.\]
  Since $\gph \partial\xi$ is geometrically derivable and, whenever $T_{\gph\partial \xi}(t,t^*)$ is a subspace, the dimension of this subspace is $1$, we conclude from \cite[Proposition 3.5]{GfrManOutVal22} that
  \[\Sp\partial \norm{\cdot}_0(x,x^*)=\Big\{\big\{(u,u^*)\in\R^n\times\R^n\mv (u_i,u_i^*)\in L_i,\ i=1,\ldots,n\big\}\Bmv L_i\in\Sp \partial\xi(x_i,x_i^*),\ i=1,\ldots,n\Big\}.\]
  Note that $\xi$ is not subdifferentially continuous at $0$ for $0$ and therefore $\norm{\cdot}_{0}$ is not subdifferentially continuous at $x$ for $x^*$ whenever $x_i=x_i^*=0$ for some $i$. Nevertheless, for all subspaces $L\in \Sp\partial \norm{\cdot\nobreak}_0(x,x^*)$ we have $L=L^*=\rge(P,I-P)\in\Z_n^{P,W}$, where $P$ is a diagonal matrix with diagonal entries belonging to $\{0,1\}$, i.e., the subspaces have exactly the same form as in case of the $\ell_1$-norm.
\end{example}
\if{
\begin{example}
  Consider now the indicator function $\delta_S$ of the set $S:=\{x\in\R^n\mv \norm{x}_0\leq m\}$ for some integer $m\in\{0,\ldots,n-1\}$. Then
  \[\partial{\delta_S }(x)=\begin{cases}
    \{0\}&\mbox{if $\norm{x}_0<m$,}\\
    \{x^*\in\R^n\mv x_ix_i^*=0, i=1,\ldots,n\}&\mbox{if $\norm{x}_0=m$}
  \end{cases}\]
  and therefore
  \[\OO_{\partial \delta_S}=\{(x,x^*)\in\gph \partial\delta_S\mv  \norm{x}_0=m\}.\]
  For any index set $I\subset \{1,\ldots,n\}$ we define the subspace
  \[L_I:=\{(u,u^*)\in\R^n\times\R^n\mv u_i=0,\ i\not\in I, u_i^*=0, i\in I\}.\]
  Then it follows that for every $(x,x^*)\in\OO_{\partial S}$ one has
  \[\Sp \partial \delta_S(x,x^*)=\{L_{I(x)}\}\]
  where $I(x):=\{i\mv x_i\not=0\}$ denotes the index set of nonzero components of $x$. Now consider $x\in\R^n$ with $\norm{x}_0<m$. Taking any index set $I\subset\{1,\ldots,n\}$ with cardinality $m$ such that $I(x)\subset I$, we can find a sequence $x_k\to x$ with $I(x_k)=I$ for all $k$. Hence
  \[\Sp \partial \delta_S(x,0)=\big\{L_I\mv I(x)\subset I\subset\{1,\ldots,n\},\ \vert I\vert=m\big\}.\]
\end{example}
}\fi
\begin{example}\label{ExEuclNorm}
  In case of the Euclidean norm $\norm{\cdot}$ on $\R^n$ we have
  \[\partial \norm{x}=\begin{cases}
    \B&\mbox{if $x=0$,}\\\frac x{\norm{x}}&\mbox{if $x\not=0$}
  \end{cases}\]
  and thus
  \[\OO_{\partial\norm{\cdot}}=\big(\{0\}\times \inn \B\big)\cup \left\{\left(x,\frac x{\norm{x}}\right)\Bmv x\not=0\right\}\]
  with
  \[\Sp\partial\norm{\cdot}(x,x^*)=\begin{cases}\big\{\{0\}\times\R^n\big\}&\mbox{if $x=0$, $x^*\in \inn\B$,}\\
  \big\{\rge(I,\nabla^2\norm{x})\big\}=\Big\{\rge\Big(I,\frac 1{\norm{x}}\Big(I-\frac{xx^T}{\norm{x}^2}\Big)\Big)\Big\}&\mbox{if $x\not=0$, $x^*=\frac x{\norm{x}}$.}
  \end{cases}\]
   Now consider $x^*\in\bd\B$ and a sequence $(x_k,x_k^*)\longsetto{\OO_{\partial\norm{\cdot}}}(0,x^*)$ such that $L_k:=T_{\gph \partial\norm{\cdot}}(x_k,x_k^*)$ converges in $\Z_n$ to some subspace $L$. If $(x_k,x_k^*)\in\{0\}\times\inn \B$ for infinitely many $k$, we readily obtain $L=\{0\}\times\R^n$. Now assume that there are only finitely many $k$ with $(x_k,x_k^*)\in\{0\}\times\inn \B$ and we conclude that $x_k^*=x_k/\norm{x_k}$ and $L_k=\rge(I,\nabla^2 \norm{x_k})$ for all $k$ sufficiently large. The Hessians $\nabla^2\norm{x_k}$ are unbounded, however, if we set $B_k:=(I+\nabla^2\norm{x_k})^{-1}$ we have $L_k=\rge(B_k,\nabla^2\norm{x_k}B_k)=\rge(B_k, I-B_k)$ and
  \[B_k=\frac{\norm{x_k}}{\norm{x_k}+1}\Big(I+\frac{x_kx_k^T}{\norm{x_k}^3}\Big),\]
  cf. \cite[Example 3.7]{GfrOutVal22b}. Then $\lim_{k\to\infty}B_k=\lim_{k\to\infty}\frac{x_kx_k^T}{\norm{x_k}^2}=x^*{x^*}^T$ and $L=\rge(x^*{x^*}^T, I-x^*{x^*}^T)$ follows. Hence
  \[\Sp\norm{\cdot}(0,x^*)=\big\{\{0\}\times\R^n, \rge(x^*{x^*}^T, I-x^*{x^*}^T)\big\},\ x^*\in\bd\B.\]
\end{example}
In what follows we want to work with the bases representing the subspaces contained in $\Sp_{P,W}\partial\varphi(x,x^*)$.
\begin{definition}
  Given $(x,x^*)\in\gph\partial\varphi$, we denote by $\M_{P,W}\partial \varphi(x,x^*)$ the collection of all symmetric $n\times n$ matrices $P$ and $W$ fulfilling conditions (ii) and (iii) of Definition \ref{DefZnP} such that $\rge(P,W)\in \Sp_{P,W}\partial\varphi(x,x^*)$.
\end{definition}
Consider now the calculation of the SCD \ssstar Newton direction for the inclusion $0\in\partial\varphi(x)$ at the iterate $\ee xk$. We denote the outcome of the approximation step by $(\ee zk,\ee{z^*}k)$. Given $(\ee Pk,\ee Wk)\in\M_{P,W}\partial\varphi(\ee zk,\ee{z^*}k)$, the linear system \eqref{EqNewtonStep} defining the Newton direction reads as
\begin{equation}\label{EqSCDNewtonDir}\ee Wk\Delta x=-\ee Pk\ee{z^*}k,\end{equation}
where we have taken into account the symmetry of the matrices $\ee Wk$ and $\ee Pk$.

\begin{lemma}\label{LemStatPoint}
  Let $(z,z^*)\in\gph \partial \varphi$, $(P,W)\in \M_{P,W}\partial\varphi(z,z^*)$ and $\bar s\in\R^n$ be given. Then the following statements are equivalent:
  \begin{enumerate}
    \item[(i)] $W\bar s=-Pz^*$.
    \item[(ii)] $\bar s$ is a stationary point for the quadratic program
    \begin{equation}\label{EqQP}\min_s \frac 12s^TWs+\skalp{z^*,s}\quad\mbox{ subject to }\quad s\in\rge P,\end{equation}
    i.e.,  $0\in \partial q_{P,W}(\bar s),$ where
    \[q_{P,W}(s):=\frac 12s^TWs+\skalp{z^*,s}+\delta_{\rge P}(s)=\begin{cases}\frac 12s^TWs+\skalp{z^*,s}&\mbox{if $s\in\rge P$,}\\
    \infty&\mbox{else.}\end{cases}\]
  \end{enumerate}
\end{lemma}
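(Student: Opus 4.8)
The plan is to compute $\partial q_{P,W}(\bar s)$ by subdifferential calculus and then translate the stationarity inclusion $0\in\partial q_{P,W}(\bar s)$ into the matrix identity (i) using the projection relations \eqref{EqPW1}--\eqref{EqPW3}. Write $q_{P,W}=h+\delta_{\rge P}$ with the smooth quadratic $h(s):=\frac 12 s^TWs+\skalp{z^*,s}$, whose gradient is $\nabla h(s)=Ws+z^*$ by symmetry of $W$. Since $h$ is continuously differentiable, the sum rule \cite[Exercise 8.8]{RoWe98} gives the exact equality $\partial q_{P,W}(\bar s)=W\bar s+z^*+N_{\rge P}(\bar s)$. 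By condition (ii) of Definition \ref{DefZnP} the matrix $P$ is the orthogonal projection onto the subspace $\rge P$, so the indicator of $\rge P$ has normal cone $N_{\rge P}(\bar s)=(\rge P)^\perp=\rge(I-P)$ for $\bar s\in\rge P$, while $\partial\delta_{\rge P}(\bar s)=\emptyset$ otherwise. Consequently, (ii) is equivalent to the two conditions $\bar s\in\rge P$ and $-(W\bar s+z^*)\in\rge(I-P)=\ker P$, the latter amounting to $P(W\bar s+z^*)=0$.

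For the implication (i)$\Rightarrow$(ii) I would first observe that $\bar s\in\rge P$ is already forced by (i): multiplying $W\bar s=-Pz^*$ on the left by $I-P$ and using $(I-P)W=I-P$ from \eqref{EqPW1} together with $(I-P)P=P-P^2=0$ yields $(I-P)\bar s=0$. Multiplying (i) by $P$ and invoking $P^2=P$ then gives $PW\bar s=-Pz^*$, i.e.\ $P(W\bar s+z^*)=0$, which is precisely the stationarity characterization above. Conversely, for (ii)$\Rightarrow$(i) I would use $\bar s\in\rge P$, so that $P\bar s=\bar s$ and $(I-P)\bar s=0$, together with the decomposition $W=PWP+(I-P)$ from \eqref{EqPW3}, to compute $W\bar s=PWP\bar s+(I-P)\bar s=PW\bar s$; combining this with the stationarity condition $PW\bar s=-Pz^*$ delivers $W\bar s=-Pz^*$, which is (i).

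I do not expect a genuine obstacle, as the statement reduces to the correct subdifferential calculus for $q_{P,W}$ and to careful bookkeeping with the idempotence of $P$ and the commutation relations \eqref{EqPW1}--\eqref{EqPW3}. The one point that genuinely requires the structure of the pair $(P,W)$ is that the membership $\bar s\in\rge P$ is \emph{not} assumed in (i) but must be deduced from it, and this is exactly where identity \eqref{EqPW1} enters.
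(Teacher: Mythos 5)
Your proposal is correct and follows essentially the same route as the paper: compute $\partial q_{P,W}(\bar s)=W\bar s+z^*+(\rge P)^\perp$ on $\rge P$, deduce $\bar s\in\rge P$ from (i) by multiplying with $I-P$ and using \eqref{EqPW1}, and pass between (i) and the stationarity inclusion by multiplying with $P$ and using \eqref{EqPW2}--\eqref{EqPW3}. The only cosmetic difference is that the paper verifies $W\bar s+z^*=(I-P)z^*\in(\rge P)^\perp$ directly, while you phrase the normal-cone membership as $P(W\bar s+z^*)=0$; these are equivalent.
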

\begin{proof}Note that
\[\partial q_{P,W}(s)=\begin{cases} Ws+z^*+N_{\rge P}(s)=Ws+z^*+(\rge P)^\perp&\mbox{if $s\in\rge P$,}\\\emptyset&\mbox{else.}\end{cases}\]
  ``(i)$\Rightarrow$(ii)'': Multiplying $W\bar s=-Pz^*$ with $I-P$ and taking into account \eqref{EqPW1} yields $(I-P)\bar s=(I-P)W\bar s=-(I-P)Pz^*=0$ showing $\bar s\in\rge P$. Further we have $W\bar s +z^*= (I-P)z^*\in\rge (I-P)=(\rge P)^\perp=-(\rge P)^\perp$.\\
  ``(ii)$\Rightarrow$(i)'': Since $\bar s \in\rge P$ we have $P\bar s=\bar s$. By multiplying  the inclusion $0\in W\bar s+z^*+(\rge P)^\perp$ with $P$ and taking into account \eqref{EqPW2} we obtain $0\in PW\bar s +Pz^*+ P(\rge P)^\perp=WP\bar s +Pz^*+\{0\}=W\bar s + Pz^*+\{0\}$.
  \end{proof}
  Clearly, a stationary solution $\bar s$ for the quadratic program \eqref{EqQP} is a local minimizer if and only if the matrix $PWP$ is positive semidefinite and in this case $\bar s$ is even a global minimizer. Further, a local minimizer $\bar s$ is unique if and only if
  \begin{equation}
    \label{PosDefW}s^TPWPs\geq \beta\norm{Ps}^2\ \forall s
  \end{equation}
  for some $\beta>0$.
  \begin{lemma}\label{LemWposdef}
    Consider $(P,W)\in \M_{P,W}\partial q(x,x^*)$ and assume that $W$ is positive definite. Then $L:=\rge(P,W)\in \Z_n^{\rm reg}$ and
  \begin{equation}\label{EqC_LStrongConv}\norm{C_L}=\frac1{\min\{s^TPWPs\mv \norm{Ps}=1\}}.\end{equation}
  \end{lemma}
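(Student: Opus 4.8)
The plan is to pin down $C_L$ explicitly and then read its spectral norm off the block structure that the projection $P$ imposes on $W$. First I would observe that positive definiteness of $W$ makes it nonsingular, so Proposition \ref{PropC_L}, applied to the basis $(P,W)\in\M(L)$, immediately yields $L=\rge(P,W)\in\Z_n^{\rm reg}$ together with the explicit formula $C_L=PW^{-1}$. This settles the first assertion.

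Next I would exploit the relations \eqref{EqPW1}--\eqref{EqPW3}. Multiplying $W(I-P)=I-P$ by $W^{-1}$ gives $W^{-1}(I-P)=I-P$, whence $PW^{-1}(I-P)=P(I-P)=0$ and therefore $C_L=PW^{-1}=PW^{-1}P$; in particular $C_L$ is symmetric and positive semidefinite, which is consistent with $L=L^*$ from Lemma \ref{LemSelfadj}. The same relations show that $W$ leaves the orthogonal pieces $\rge P$ and $\ker P=\rge(I-P)$ invariant, acting as the identity on the latter and as a positive definite operator $W|_{\rge P}$ on the former; hence $W^{-1}$ is block diagonal as well, and $PW^{-1}P$ annihilates $\ker P$ while coinciding with $(W|_{\rge P})^{-1}$ on $\rge P$.

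Since $C_L=PW^{-1}P$ is symmetric positive semidefinite, its spectral norm equals its largest eigenvalue. Because $C_L$ vanishes on $\ker P$ and equals $(W|_{\rge P})^{-1}$ on $\rge P$, that largest eigenvalue is $\lambda_{\max}\big((W|_{\rge P})^{-1}\big)=1/\lambda_{\min}(W|_{\rge P})$. It then remains to match the denominator with the constrained minimum in \eqref{EqC_LStrongConv}: substituting $u=Ps$ and using $s^TPWPs=(Ps)^TW(Ps)$ together with the fact that $\{Ps\mv \norm{Ps}=1\}$ is exactly the unit sphere of $\rge P$, I obtain $\min\{s^TPWPs\mv \norm{Ps}=1\}=\min\{u^TWu\mv u\in\rge P,\ \norm{u}=1\}=\lambda_{\min}(W|_{\rge P})$, which is precisely \eqref{EqC_LStrongConv}.

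The step requiring most care is the reduction $C_L=PW^{-1}P$ and the verification that the spectral norm is governed solely by the $\rge P$ block: one must check that the vanishing block over $\ker P$ does not interfere, and that the substitution $u=Ps$ really sweeps out the whole unit sphere of $\rge P$. In the degenerate case $P=0$ one has $W=I$, $C_L=0$, and the feasible set in \eqref{EqC_LStrongConv} is empty, so the identity still holds under the convention $\inf\emptyset=+\infty$.
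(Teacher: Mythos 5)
Your proof is correct and follows essentially the same route as the paper: both obtain $L\in\Z_n^{\rm reg}$ and $C_L=PW^{-1}$ from Proposition \ref{PropC_L}, reduce everything to the action of $W$ on $\rge P$ via the identities \eqref{EqPW1}--\eqref{EqPW3}, and finish with the spectral characterization of the norm of a symmetric positive semidefinite matrix. The only cosmetic difference is that the paper works in coordinates through an orthonormal basis matrix $Z$ with $P=ZZ^T$ and the factorization $C_L=Z(Z^TWZ)^{-1}Z^T$, whereas you argue coordinate-free with the restriction $W|_{\rge P}$.
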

  \begin{proof}
 Since $W$ is positive definite, $L\in \Z_n^{\rm reg}$ by Proposition \ref{PropC_L}.
  In order to prove equation \eqref{EqC_LStrongConv}, consider an $n\times m$ matrix $Z$, whose columns form an orthonormal basis of $\rge P$, where $m:=\dim(\rge P)$. Then $P=ZZ^T$ and every $p\in\R^m$ can be written as $p=Z^Ts$ with some $s\in\R^n$. By \eqref{EqPW3} we have $W=ZZ^TWZZ^T+(I-ZZ^T)$ implying $Z^TW=Z^TWZZ^T$ and $(Z^TWZ)^{-1}Z^T=Z^TW^{-1}$.
  Thus
  \[C_L=PW^{-1}=ZZ^TW^{-1}=Z(Z^TWZ)^{-1}Z^T\]
  and, by taking into account that $\norm{s}^2\geq \norm{Z^Ts}^2$ $\forall s\in\R^n$, where equality holds if and only if $s=ZZ^Ts$, we obtain that
  \begin{align*}\norm{C_L}&=\max\{s^TZ(Z^TWZ)^{-1}Z^Ts\mv \norm{s}=1\}=\max\{p^T(Z^TWZ)^{-1}p\mv \norm{p}=1\}\\
  &=\frac1{\min \{p^TZ^TWZp\mv \norm{p}=1\}}=\frac1{\min\{s^TPWPs\mv \norm{Ps}=1\}}.\end{align*}
  \end{proof}
  \begin{proposition}\label{PropTiltStab}Assume that the lsc function $\varphi:\R^n\to\oR$ is prox-regular and subdifferentially continuous at $\xb\in \R^n$ for $\xba=0$. Then the following statements are equivalent.
  \begin{enumerate}
  \item[(i)] $\xb$ is a tilt-stable local minimizer for $\varphi$
  \item[(ii)] For every pair $(P,W)\in \M_{P,W}\partial \varphi(\xb,0)$ the matrix $W$ is positive definite.
  \item[(iii)] There is a neighborhood $U\times V$ of $(\xb,0)$ such that for every $(x,x^*)\in\gph \partial\varphi\cap(U\times V)$ and every  $(P,W)\in \M_{P,W}\partial \varphi(x,x^*)$ the matrix $W$ is positive definite.
  \end{enumerate}
  Moreover, these statements entail that
  \begin{equation}\label{EqSCDReg_TiltStable}\scdreg \partial \varphi(\xb,0)=\sup\left\{\frac1{\min\{s^TPWPs\mv \norm{Ps}=1\}} \bmv (P,W)\in \M_{P,W}\partial \varphi(\xb,0)\right\}.\end{equation}
  \end{proposition}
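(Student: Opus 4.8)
The plan is to route everything through the tilt-stability characterization in Theorem \ref{ThTiltStab}, combined with the elementary observation that for a self-adjoint basis $(P,W)\in\M_{P,W}\partial\varphi(x,x^*)$ the matrix $W$ is block diagonal with respect to the orthogonal decomposition $\R^n=\rge P\oplus(\rge P)^\perp$. Indeed, writing $P=ZZ^T$ with the columns of $Z$ an orthonormal basis of $\rge P$ and completing to an orthogonal matrix $[Z\ Z_\perp]$, relation \eqref{EqPW3} gives $W=[Z\ Z_\perp]\,{\rm diag}(Z^TWZ,I)\,[Z\ Z_\perp]^T$. Hence $W$ is nonsingular (resp.\ positive definite) if and only if the block $Z^TWZ$ is, and whenever $W$ is nonsingular one has, exactly as in the proof of Lemma \ref{LemWposdef}, $C_L=PW^{-1}=Z(Z^TWZ)^{-1}Z^T$ for $L=\rge(P,W)$. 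Since this symmetric $C_L$ has the same inertia on $\rge P$ as $(Z^TWZ)^{-1}$ and vanishes on $(\rge P)^\perp$, I obtain the pivotal equivalence: when $W$ is nonsingular, $C_L$ is positive semidefinite if and only if $W$ is positive definite.

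Next I would settle (i)$\Leftrightarrow$(ii). By Proposition \ref{PropProxRegularQ} every $L\in\Sp\partial\varphi(\xb,0)$ lies in $\ZnP$, so $\Sp^*\partial\varphi(\xb,0)=\Sp\partial\varphi(\xb,0)=\Sp_{P,W}\partial\varphi(\xb,0)$ and each such $L$ is represented by a unique $(P,W)\in\M_{P,W}\partial\varphi(\xb,0)$. By the pivotal equivalence and Proposition \ref{PropC_L}, condition (ii) holds if and only if every $L\in\Sp\partial\varphi(\xb,0)=\Sp^*\partial\varphi(\xb,0)$ satisfies $L\in\Z_n^{\rm reg}$ (i.e.\ $\partial\varphi$ is \SCD regular at $(\xb,0)$) and has $C_L$ positive semidefinite. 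This is precisely condition (ii) of Theorem \ref{ThTiltStab}, whence (ii)$\Leftrightarrow$(i). The implication (iii)$\Rightarrow$(ii) is the trivial specialization to $(x,x^*)=(\xb,0)$.

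The substantial implication is (ii)$\Rightarrow$(iii), and here the main obstacle is that \SCD regularity alone, which by Lemma \ref{Lem_scdreg} does propagate to a neighborhood with $\limsup\scdreg\partial\varphi\le\scdreg\partial\varphi(\xb,0)=:M<\infty$, only forces $W$ to be nonsingular nearby, not positive definite: a priori $W$ could acquire a negative eigenvalue escaping to $-\infty$, equivalently a negative eigenvalue of $C_L$ shrinking to $0$, which no limiting argument alone can exclude. The remedy is to use prox-regularity to bound $W$ from below. I would argue by contradiction: if (iii) fails there are $(x_k,x_k^*)\to(\xb,0)$ in $\gph\partial\varphi$ and $(P_k,W_k)\in\M_{P,W}\partial\varphi(x_k,x_k^*)$ with $W_k$ not positive definite; by neighborhood \SCD regularity $L_k:=\rge(P_k,W_k)\in\Z_n^{\rm reg}$ and $\norm{C_{L_k}}\le M$, so the block $Z_k^TW_kZ_k$ has all eigenvalues of modulus $\ge 1/M$ and, being nonsingular but not positive definite, at least one negative eigenvalue $\mu_k$. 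Local hypomonotonicity of $\partial\varphi$ coming from prox-regularity with modulus $\rho$ (pass the inequality $\skalp{v+\rho u,u}\ge 0$, valid for $(u,v)$ in the tangent cones to $\gph\partial\varphi$, to the limiting subspaces defining the SC derivative) yields $s^TW_ks\ge-\rho\norm{s}^2$ on $\rge P_k$, hence $\mu_k\in[-\rho,-1/M]$. Choosing a unit eigenvector $s_k\in\rge P_k$ gives $(s_k,\mu_ks_k)\in L_k$ with uniformly bounded norm; passing to a subsequence, $s_k\to\bar s$, $\mu_k\to\bar\mu\in[-\rho,-1/M]$ and $L_k\to L\in\Sp_{P,W}\partial\varphi(\xb,0)$ (compactness of $\ZnP$ and \cite[Lemma 3.13]{GfrOut22a}), so $(\bar s,\bar\mu\bar s)\in L=\rge(C_L,I)$ and therefore $C_L\bar s=\tfrac1{\bar\mu}\bar s$ with $\tfrac1{\bar\mu}<0$. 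This contradicts $C_L$ being positive semidefinite, which holds because (ii)$\Rightarrow$(i) makes $\xb$ tilt-stable and Theorem \ref{ThTiltStab} then forces all $C_L$, $L\in\Sp\partial\varphi(\xb,0)$, to be positive semidefinite.

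Finally, formula \eqref{EqSCDReg_TiltStable} follows by combining the three ingredients: \SCD regularity at $(\xb,0)$ gives, via Lemma \ref{Lem_scdreg}, $\scdreg\partial\varphi(\xb,0)=\sup\{\norm{C_L}\mv L\in\Sp^*\partial\varphi(\xb,0)\}$; the identification $\Sp^*\partial\varphi(\xb,0)=\Sp_{P,W}\partial\varphi(\xb,0)$ lets me index the supremum by $(P,W)\in\M_{P,W}\partial\varphi(\xb,0)$; and Lemma \ref{LemWposdef}, applicable because each such $W$ is positive definite by (ii), evaluates $\norm{C_L}=1/\min\{s^TPWPs\mv\norm{Ps}=1\}$, which is exactly the claimed right-hand side.
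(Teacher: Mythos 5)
Your proof is correct and its overall architecture coincides with the paper's: (i)$\Leftrightarrow$(ii) is routed through Theorem \ref{ThTiltStab} together with the block-diagonal algebra $W=PWP+(I-P)$, (iii)$\Rightarrow$(ii) is trivial, (ii)$\Rightarrow$(iii) is a sequential compactness argument whose crux is a uniform lower bound on the nonpositive eigenvalues of the $W_k$, and \eqref{EqSCDReg_TiltStable} follows from Lemmata \ref{Lem_scdreg} and \ref{LemWposdef}. The one step where you genuinely diverge is the source of that lower bound and the target of the final contradiction. The paper obtains $\sigma_k\geq-1/\lambda$ by writing $L_k=\rge(B_k,\frac1\lambda(I-B_k))$ via Proposition \ref{PropProxRegularQ} and computing $W_k=\frac1\lambda B_k^\dag-\frac1\lambda P_k+(I-P_k)$ explicitly, then passes $(p_k,W_kp_k)$ to the limit and contradicts positive definiteness of $W$ directly from $\bar p^T\bar w\leq0$; you instead derive $s^TW_ks\geq-\rho\norm{s}^2$ from hypomonotonicity of the prox-regular localization (passed through tangent cones to the SC derivative), add an upper bound $\mu_k\leq-1/M$ from neighborhood \SCD regularity, and contradict positive semidefiniteness of $C_L$ via a strictly negative limiting eigenvalue. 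Both routes work; the paper's is more self-contained given that Proposition \ref{PropProxRegularQ} already packages the prox-regularity information, and it avoids your extra bound keeping the eigenvalue away from zero (a nonpositive limit $q^TPWPq\leq0$ already negates positive definiteness of $W$, so the detour through $C_L$ is unnecessary). Your hypomonotonicity step is stated somewhat informally but is a standard and valid argument.
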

  \begin{proof}
  ad (i)$\ \Leftrightarrow\ $(ii): By Theorem \ref{ThTiltStab}, Lemma \ref{PropPW_Basis} and \eqref{EqPW2}, $\xb$ is a tilt-stable minimizer for $\varphi$, if and only if for every $(P,W)\in\M_{P,W}\partial\varphi(\xb,0)$ the matrix $W$ is nonsingular and the matrix $PW=PWP$ is positive semidefinite. Since $z^TWz=z^TPWPz+z^T(I-P)z\geq z^TPWPz$, positive semidefiniteness of $PWP$ together with nonsingularity of $W$ is equivalent with  definiteness of $W$.

  ad (ii)$\ \Rightarrow\ $(iii): By contraposition. Assume that (iii) does not hold and there exist sequences $(x_k,x_k^*)\longsetto{\gph\partial \varphi}(\xb,0)$ and $(P_k,W_k)\in\M_{P,W}\partial\varphi(x_k,x_k^*)$ such that for every $k$ the matrix $W_k$ is not positive definite, i.e., the smallest eigenvalue $\sigma_k$ of the matrix $W_k$ is nonpositive. Since $W_k=P_kW_kP_k+(I-P_k)$, we may also conclude that $\sigma_k$ is the smallest eigenvalue of $P_kW_kP_k$. By Proposition \ref{PropProxRegularQ} there is some $\lambda>0$ such that for every $k$ sufficiently large there is some symmetric positive semidefinite matrix $B_k$ with $\rge(P_k,W_k)=\rge(B_k,\frac 1\lambda(I-B_k))$. From the proof of Proposition \ref{PropPW_Basis} we extract the relations $P_k=B_kB_k^\dag$ and $W_k=\frac 1\lambda(I-B_k)B_k^\dag +(I-P_k) =\frac 1\lambda B_k^\dag -\frac 1\lambda P_k+(I-P_k)$. Denoting by $p_k$, $\norm{p_k}=1$ a normalized eigenvector of $P_kW_kP_k$ to the eigenvalue $\sigma_k$, we obtain $p_k=P_kp_k$ and, together with the fact that the Moore-Penrose inverse of a symmetric positive semidefinite matrix is again positive semidefinite, that
  \[0\geq\sigma_k=p_k^TP_kW_kP_kp_k=\frac 1\lambda p_k^TP_kB_k^\dag P_kp_k-\frac 1\lambda\norm{P_kp_k}^2+\norm{(I-P_k)p_k}^2\geq-\frac1\lambda.\]
  Hence the sequence $W_kp_k=\sigma_kp_k$ is bounded and, since $\ZnP$ is a closed subspace of the compact metric space $\Z_n$, by possibly passing to a subsequence we may assume that $L_k:=\rge(P_k,W_k)$ converges in $Z_n$ to some subspace $L=\rge(P,W)\in\M_{P,W}\partial\varphi(\xb,0)$, that $p_k$ converges to some $\bar p\not=0$ and that $W_kp_k$ converges to some $\bar w$. Obviously, $\bar p^T\bar w=\lim_{k\to\infty}p_k^TW_kp_k\leq 0$ and from $(p_k,W_kp_k)\in L_k$ we can conclude that $(\bar p,\bar w)\in L$, i.e., there is some $q$ with $\bar p=Pq\not=0$ and $\bar w=Wq$ showing that $\bar p^T\bar w=q^TPWq=q^TPWPq\leq 0$. Hence $W$ is not positive definite and we have verified that (ii) does not hold.

ad (iii)$\ \Rightarrow\ $(ii): This obviously holds.

Finally, equation \eqref{EqSCDReg_TiltStable} follows from Lemmata \ref{Lem_scdreg} and \ref{LemWposdef}.
\end{proof}
%%%%%%%%%%%%%%%%%%%%%%%%%%%%%%%%%%%%%%%%%%%%%%%%%%%%%%%%%%%%%%%%
\if{
\begin{lemma}\label{LemStrongConv}
   Assume that the lsc function $q:\R^n\to\oR$ is strongly convex with constant $\beta>0$. Then for every $(x,x^*)\in\gph\partial q$ and every $(P,W)\in \M_{P,W}\partial q(x,x^*)$ the inequality \eqref{PosDefW} is fulfilled and $W$ is positive definite.
   Thus $\partial q$ is SCD regular with $\scdreg \partial q(x,x^*)\leq 1/\beta$.
\end{lemma}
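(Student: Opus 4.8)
The plan is to reduce inequality \eqref{PosDefW} to a coordinate-free monotonicity property of the subspaces in $\Sp\partial q(x,x^*)$ and to exploit that this property is stable under convergence in $\Z_n$. First I would record the consequences of strong convexity. A strongly convex function is in particular convex, hence prox-regular and subdifferentially continuous at every point, so Assumption \ref{AssProx} holds with ${\rm proxreg\,}q=\gph\partial q$ and all preceding structural results apply; by Proposition \ref{PropPW_Basis}, $\emptyset\neq\Sp\partial q(x,x^*)=\Sp_{P,W}\partial q(x,x^*)\subset\ZnP$ and every such subspace is self-adjoint. Strong convexity with constant $\beta$ also gives strong monotonicity of $\partial q$, i.e.
\[\skalp{x_1^*-x_2^*,x_1-x_2}\geq\beta\norm{x_1-x_2}^2\quad\text{for all }(x_1,x_1^*),(x_2,x_2^*)\in\gph\partial q.\]
The crucial observation is that \eqref{PosDefW} can be read as a property of $L=\rge(P,W)$: every element of $L$ has the form $(Pp,Wp)$, and by symmetry of $P$ and $W$ together with \eqref{EqPW2} we have $\skalp{Pp,Wp}=p^TPWp=p^TPWPp$; hence \eqref{PosDefW} is equivalent to
\[\Phi_\beta(L):\qquad \skalp{u,v}\geq\beta\norm{u}^2\quad\text{for all }(u,v)\in L,\]
the two statements being linked through $u=Ps$, $v=Ws$.

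Next I would verify $\Phi_\beta$ on the tangent subspaces entering the definition of $\Sp\partial q$. Let $(\xb,\xba)\in\OO_{\partial q}$ and let $(d,d^*)\in T:=T_{\gph\partial q}(\xb,\xba)$. By the definition of the tangent cone there are $t_k\downarrow 0$ and $(x_k,x_k^*)\in\gph\partial q$ with $\big((x_k,x_k^*)-(\xb,\xba)\big)/t_k\to(d,d^*)$. Applying strong monotonicity to the pair $(x_k,x_k^*),(\xb,\xba)$, dividing by $t_k^2$ and letting $k\to\infty$ yields $\skalp{d^*,d}\geq\beta\norm{d}^2$, i.e. $\Phi_\beta(T)$ holds at every graphically smooth point.

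The heart of the argument is then a closedness statement: $\Phi_\beta$ is preserved under convergence in $\Z_n$. Indeed, if $L_k\to L$ in $\Z_n$ with $\Phi_\beta(L_k)$ for all $k$, then, since convergence in $\Z_n$ coincides with Painlev\'e--Kuratowski convergence, every $(u,v)\in L$ is a limit of elements $(u_k,v_k)\in L_k$; passing to the limit in $\skalp{u_k,v_k}\geq\beta\norm{u_k}^2$ gives $\Phi_\beta(L)$. Since every $L\in\Sp\partial q(x,x^*)$ is by definition a $\Z_n$-limit of tangent subspaces $T_{\gph\partial q}(x_k,x_k^*)$ at graphically smooth points converging to $(x,x^*)$, each of which satisfies $\Phi_\beta$ by the previous step, we conclude $\Phi_\beta(L)$, which is exactly \eqref{PosDefW}. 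I expect this limiting step to be the main obstacle: the representing matrices $W_k$ of $T_{\gph\partial q}(x_k,x_k^*)$ may be unbounded (as in Example \ref{ExEuclNorm}), so the inequality cannot be transported to the limit on the level of the matrices $(P_k,W_k)$; it is essential to phrase it as the matrix-free property $\Phi_\beta$, which survives the Painlev\'e--Kuratowski inner limit.

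Finally I would harvest the consequences. Using \eqref{EqPW3} and the orthogonality of the decomposition $s=Ps+(I-P)s$,
\[s^TWs=s^TPWPs+\norm{(I-P)s}^2\geq\beta\norm{Ps}^2+\norm{(I-P)s}^2\geq\min\{\beta,1\}\norm{s}^2,\]
so $W$ is positive definite. By Lemma \ref{LemWposdef} this gives $L=\rge(P,W)\in\Z_n^{\rm reg}$ and $\norm{C_L}=1/\min\{s^TPWPs\mv\norm{Ps}=1\}\leq 1/\beta$, the last inequality by \eqref{PosDefW}. As this holds for every $L\in\Sp\partial q(x,x^*)=\Sp^*\partial q(x,x^*)$, the mapping $\partial q$ is \SCD regular at $(x,x^*)$, and Lemma \ref{Lem_scdreg} yields $\scdreg\partial q(x,x^*)=\sup\{\norm{C_{L^*}}\mv L^*\in\Sp^*\partial q(x,x^*)\}\leq 1/\beta$.
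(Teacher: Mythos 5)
Your proof is correct, but it takes a genuinely different route from the paper's. The paper argues by \emph{tilting}: it passes to the convex function $\tilde q:=q-\frac12\beta\norm{\cdot}^2$, invokes the sum rule \cite[Proposition 3.15]{GfrOut22a} to get $\rge(P,W-\beta P)\in\Sp^*\partial\tilde q(x,x^*-\beta x)$, then uses the structural result \cite[Corollary 3.28]{GfrOut22a} that subspaces in the SC derivative of a convex function have the form $\rge(B,I-B)$ with $B$ symmetric positive semidefinite and $\norm{B}\leq 1$; a Moore--Penrose computation (as in the proof of Proposition \ref{PropPW_Basis}) identifies $W-\beta P=(I-B)B^\dag+I-P$, whose positive semidefiniteness is exactly \eqref{PosDefW}. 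You instead phrase \eqref{PosDefW} as the coordinate-free property $\skalp{u,v}\geq\beta\norm{u}^2$ on $L=\rge(P,W)$ (correctly, via $\skalp{Pp,Wp}=p^TPWPp$ from \eqref{EqPW2}), derive it on tangent subspaces at graphically smooth points from strong monotonicity of $\gph\partial q$ by a difference-quotient argument, and transport it to $\Sp\partial q(x,x^*)$ through the Painlev\'e--Kuratowski inner limit. Your observation that the matrix-free formulation is what survives the limit (since the representing matrices $W_k$ may blow up) is exactly right and is the step that makes the argument work. What each approach buys: the paper's proof leans on the already-developed convex calculus and yields the explicit $B$-representation as a by-product; yours is more elementary and self-contained (no pseudoinverses, no external sum rule or Corollary 3.28) and generalizes verbatim to any mapping with a strongly monotone graph, not just subdifferentials. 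The concluding steps (positive definiteness of $W$ via \eqref{EqPW3}, and the bound $\scdreg\partial q(x,x^*)\leq 1/\beta$ via Lemmata \ref{LemWposdef} and \ref{Lem_scdreg}) coincide with the paper's.
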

\begin{proof}
    Consider $(x,x^*)\in\gph\partial q$ and $(P,W)\in \M_{P,W}\partial q(x,x^*)$.
  By \cite[Exercise 12.59]{RoWe98}, the function $\tilde q:=q-\frac12\beta\norm{\cdot}^2$ is convex and $\partial \tilde q(x) = \partial q(x)-\beta x$. Hence, by invoking \cite[Proposition 3.15]{GfrOut22a} we obtain $\tilde L:=\rge(P,W-\beta P)\in\Sp^*\partial\tilde q(x,x^*-\beta x)$. Since $(W-\beta P)(I-P)=I-P$, we conclude that $(P,W-\beta P)\in\M_{P,W}\partial q(x,x^*-\beta x)$. On the other hand, from \cite[Corollary 3.28]{GfrOut22a} we deduce that  there is a symmetric positive semidefinite matrix $B$ with $\norm{B}\leq 1$ such that $\tilde L=\rge (B,I-B)$. Utilizing the  arguments already employed in the proof of Proposition \ref{PropPW_Basis}, we obtain that $P=BB^\dag$ and $W-\beta P=(I-B)B^\dag +I-P$. Taking into account $\norm{B}\leq 1$ we obtain that $I-B$ is positive semidefinite and consequently $(I-B)B^\dag$ and $(I-B)B^\dag+(I-P)$ are positive semidefinite as well. Thus, for every $s\in\R^n$ we have $0\leq (Ps)^T(W-\beta P)Ps=s^TPWPs-\beta\norm{Ps}^2$ showing the first assertion. By taking into account \eqref{EqPW3}, for every $s\in\R^n$ we have
  \[s^TWs=s^TPWPs+s^T(I-P)s\geq \beta \norm{Ps}^2+\norm{(I-P)s}^2\geq\min\{\beta,1\}\norm{s}^2\]
  showing that $W$ is positive definite.
 The last statement on SCD regularity of $\partial q$ is now a consequence of Lemma \ref{Lem_scdreg} and \eqref{EqC_LStrongConv}.
  \end{proof}
}\fi
%%%%%%%%%%%%%%%%%%%%%
  At the end of this section we provide some sum rule.
\begin{lemma}\label{LemSumRul}Assume that $\varphi=f+g$, where $f$ is twice continuously differentiable at $\xb$ and $g:\R^n\to\oR$ is a proper lsc function. Given $\xba\in\partial g(\xb)$, for every $(P,W)\in\M_{P,W}\partial g(\xb,\xba)$ one has
\[(P,P\nabla^2 f(\xb)P +W)\in \M_{P,W}\partial \varphi(\xb,\nabla f(\xb)+\xba).\]
\end{lemma}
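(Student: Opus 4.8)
The plan is to realize $\gph\partial\varphi$ as the image of $\gph\partial g$ under a smooth change of variables and to transport the SC derivative through it. Since $f$ is smooth, the standard sum rule for a continuously differentiable plus an lsc function (see, e.g., \cite{RoWe98}) gives $\partial\varphi(x)=\nabla f(x)+\partial g(x)$, so that $\gph\partial\varphi=\Phi(\gph\partial g)$ for the map $\Phi(x,x^*):=(x,\nabla f(x)+x^*)$. Because $f$ is twice continuously differentiable near $\xb$, $\Phi$ is a $C^1$-diffeomorphism in a neighborhood of $(\xb,\xba)$, with nonsingular Jacobian
\[\nabla\Phi(x,x^*)=\myvec{I&0\\\nabla^2 f(x)&I}.\]
This is the structural fact I would exploit.

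First I would record the algebraic part. Writing $H:=\nabla^2 f(\xb)$ (symmetric, as $f$ is $C^2$) and $\tilde W:=PHP+W$, the matrix $\tilde W$ is symmetric, $P^2=P$ is unchanged, and
\[\tilde W(I-P)=PHP(I-P)+W(I-P)=0+(I-P)=I-P,\]
using $P(I-P)=0$ and property (iii) for $(P,W)$. Hence $(P,\tilde W)$ satisfies (ii) and (iii) of Definition \ref{DefZnP}, so $\rge(P,\tilde W)\in\ZnP$. Moreover a short computation based on $P^2=P$ and $W(I-P)=I-P$ shows $\rge(P,HP+W)=\rge(P,PHP+W)=\rge(P,\tilde W)$: for each $u$ the vector $u':=Pu+(I-P)(u-HPu)$ satisfies $Pu'=Pu$ and $(HP+W)u'=\tilde W u$, which gives one inclusion, and equality follows since both subspaces are $n$-dimensional (the map $u\mapsto(Pu,\tilde Wu)$ is injective because $Pu=0$ forces $\tilde Wu=(I-P)u=u$). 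Thus $\rge(P,\tilde W)$ is exactly the push-forward of $\rge(P,W)$ under $\nabla\Phi(\xb,\xba)$.

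The core step is then the limiting argument for the SC derivative. By the hypothesis $\rge(P,W)\in\Sp_{P,W}\partial g(\xb,\xba)$ there are graphically smooth points $(x_k,x_k^*)\longsetto{\OO_{\partial g}}(\xb,\xba)$ with $L_k:=T_{\gph\partial g}(x_k,x_k^*)\to\rge(P,W)$ in $\Z_n$. Since $\Phi$ is a local diffeomorphism, each image point $\Phi(x_k,x_k^*)=(x_k,\nabla f(x_k)+x_k^*)$ is a graphically smooth point of $\partial\varphi$ and tangent cones transform by the Jacobian, i.e. $T_{\gph\partial\varphi}(\Phi(x_k,x_k^*))=\nabla\Phi(x_k,x_k^*)L_k$. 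Because $\nabla^2 f$ is continuous, the matrices $\nabla\Phi(x_k,x_k^*)$ converge to $\nabla\Phi(\xb,\xba)$, and choosing bases $Z_k$ of $L_k$ with $Z_k\to Z$ a basis of $\rge(P,W)$, the products $\nabla\Phi(x_k,x_k^*)Z_k$ converge to the full-rank matrix $\nabla\Phi(\xb,\xba)Z$; by \cite[Lemma 3.1]{GfrOut22a} this yields $\nabla\Phi(x_k,x_k^*)L_k\to\nabla\Phi(\xb,\xba)\,\rge(P,W)=\rge(P,\tilde W)$ in $\Z_n$. As $\Phi(x_k,x_k^*)\to(\xb,\nabla f(\xb)+\xba)$, we conclude $\rge(P,\tilde W)\in\Sp\partial\varphi(\xb,\nabla f(\xb)+\xba)$. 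Combined with $\rge(P,\tilde W)\in\ZnP$ from the second paragraph, this gives $\rge(P,\tilde W)\in\Sp_{P,W}\partial\varphi(\xb,\nabla f(\xb)+\xba)$, i.e. $(P,\tilde W)\in\M_{P,W}\partial\varphi(\xb,\nabla f(\xb)+\xba)$, as claimed.

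The main obstacle I anticipate is the rigorous justification of the two transport facts in the core step: that the $C^1$-diffeomorphism $\Phi$ carries graphically smooth points of $\partial g$ to graphically smooth points of $\partial\varphi$ with tangent cones related by $\nabla\Phi$ (the standard diffeomorphism invariance of tangent cones applied to $\gph\partial g$), and that the induced action $L\mapsto\nabla\Phi(\cdot)L$ on $\Z_n$ is continuous jointly in the base point (through $\nabla^2 f$) and in $L$, which reduces to convergence of bases under convergent nonsingular matrices. A secondary point worth pinning down is the precise reading of ``twice continuously differentiable at $\xb$'': I would take it to mean $f\in C^2$ on a neighborhood of $\xb$, so that $\Phi$ is genuinely a $C^1$-diffeomorphism near each approximating point $(x_k,x_k^*)$ and the tangent-cone transformation is available along the whole sequence.
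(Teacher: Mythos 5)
Your proof is correct and structurally matches the paper's: the algebraic core — verifying that $(P,P\nabla^2 f(\xb)P+W)$ satisfies conditions (ii)--(iii) of Definition \ref{DefZnP} and that $\rge(P,\nabla^2 f(\xb)P+W)=\rge(P,P\nabla^2 f(\xb)P+W)$ via the substitution $\tilde p=p\pm(I-P)\nabla^2 f(\xb)Pp$ — is essentially identical to what the paper does. The only divergence is that the paper obtains $\rge(P,\nabla^2 f(\xb)P+W)\in\Sp\partial\varphi(\xb,\nabla f(\xb)+\xba)$ by directly invoking the sum rule of \cite[Proposition 3.15]{GfrOut22a}, whereas you re-derive that fact from first principles by pushing $\gph\partial g$ forward through the $C^1$-diffeomorphism $\Phi(x,x^*)=(x,\nabla f(x)+x^*)$ and transporting tangent cones; this is a sound, self-contained substitute for the citation, at the cost of having to justify the diffeomorphism invariance of graphically smooth points and the joint continuity of $L\mapsto\nabla\Phi(\cdot)L$, both of which you correctly identify and which do go through.
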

\begin{proof}
  By \cite[Proposition 3.15]{GfrOut22a} we have $L=\rge(P,\nabla^2f(\xb)P+W)\in\Sp\partial\varphi(\xb,\nabla f(\xb)+\xba)$. Let $(Pp, (\nabla^2f(\xb)P+W)p)\in L$ and set $\tilde p:=p+(I-P)\nabla^2f(\xb)Pp$. Then $P\tilde p=Pp$ and
  \begin{align*}(P\nabla^2f(\xb)P+W)\tilde p&= P\nabla^2f(\xb)Pp +Wp +W(I-P)\nabla^2f(\xb)Pp\\
  &=P\nabla^2f(\xb)Pp +Wp +(I-P)\nabla^2f(\xb)Pp=(\nabla^2f(\xb)P+W)p,\end{align*}
  verifying $L\subset \rge(P,P\nabla^2 f(\xb)P +W)$. On the other hand, consider
  \[(Pp,(P\nabla^2 f(\xb)P +W)p)\in\rge(P,P\nabla^2 f(\xb)P +W)\]
   and set $\tilde p:=p-(I-P)\nabla^2f(\xb)Pp$. Then $P\tilde p=Pp$ and
  \begin{align*}(\nabla ^2f(\xb)P+W)\tilde p&=\nabla^2f(\xb)Pp+Wp-W(I-P)\nabla^2f(\xb)Pp=\nabla^2f(\xb)Pp+Wp-(I-P)\nabla^2f(\xb)Pp\\
  &=(P\nabla^2f(\xb)P+W)p\end{align*}
  showing the  reverse inclusion $\rge(P,P\nabla^2 f(\xb)P +W)\subset L$. Hence equality holds and it is easy to see that $P\nabla^2 f(\xb)P +W$ is symmetric and satisfies $(P\nabla^2 f(\xb)P +W)(I-P)=W(I-P)=I-P$. Thus $(P,P\nabla^2 f(\xb)P +W)\in \M_{P,W}\partial \varphi(\xb,\nabla f(\xb)+\xba)$.
\end{proof}

\section{Globalizing the  SCD semismooth$^{*}$ Newton method}

The suggested globalization of the \SCD \ssstar Newton method is done by combining it with some globally convergent method and to this end we use  {\em forward-backward splitting} (also called {\em proximal gradient method}). One iteration of  forward-backward splitting can be described by
\[\ee x{k+1}\in T_{\ee \lambda k}(\ee xk),\]
where for convergence of the method it is important that the parameter $\ee\lambda k$ is  chosen adequately. If $\nabla f$ is globally Lipschitz with constant $L$,  a constant choice $\ee\lambda k\in(0,\frac 2L)$ ensures convergence. In case of unknown Lipschitz modulus $L$ some backtracking procedure for computing $\ee\lambda k$ is available, see, e.g., \cite[Section 10.3.3]{Be17}.

Very recently, De Marchi and Themelis \cite{MaTh22} presented the algorithm  PANOC${}^+$ based on forward-backward splitting, where global convergence is established under the weaker assumption of strict continuity of the gradient $\nabla f$. The following algorithm BasGSSN (Basic Globalized SemiSmooth$^{*}$ Newton method) is motivated by the algorithm PANOC${}^+$.

\begin{algorithm}[Algorithm BasGSSN]\label{AlgBasGSSN}\ \\
  {\bf Input: } Starting point $\ee x0\in \R^n$, $0<\lambda_0\leq\bar\lambda <\lambda_g$, parameters $\alpha, \beta\in(0,1)$, $\bar\rho>0$ and $\sigma\in(0,\frac 12]$.\\
  1. Set $k \leftarrow 0$, $\ee\lambda0\leftarrow\lambda_0$, compute $\ee z0\in T_{\ee \lambda 0}(\ee x0)$ and set $\ee \eta0\leftarrow\frac1{2\ee\lambda 0}\norm{\ee z0-\ee x0}^2$;\\
  2. \begin{minipage}[t]{\myAlgBox}{\tt while} $f(\ee z 0) > \ell_f(\ee x0,\ee z 0)+\alpha \ee \eta 0$\\
  \mbox{\qquad}$\ee\lambda 0\leftarrow\ee\lambda0/2$, compute $\ee z0\in T_{\ee \lambda 0}(\ee x0)$, $\ee \eta0\leftarrow\frac1{2\ee\lambda 0}\norm{\ee z0-\ee x0}^2$;\\
  {\tt end while}
  \end{minipage}\\[1ex]
  3. {\tt if} $\ee xk==\ee zk$ or another convergence criterion is fulfilled {\tt then stop};\\
  4. Compute  a search direction $\ee sk \in\R^n$ with $\norm{\ee sk}\leq \bar\rho$;\\
  5. \begin{minipage}[t]{\myAlgBox}Set $\ee\tau k\leftarrow 1$, $\ee x{k+1}\leftarrow\ee z k+\ee \tau k\ee sk$, $\ee \lambda{k+1}\leftarrow\ee\lambda k$,
   compute $\ee z{k+1}\in T_{\ee\lambda{k+1}}(\ee x{k+1})$ and set $\ee\eta{k+1}\leftarrow\frac 1{2\ee\lambda{k+1}}\norm{\ee z{k+1}-\ee x{k+1}}^2$;
  \end{minipage}\\[0.5ex]
  6. \begin{minipage}[t]{\myAlgBox}{\tt while} $\phiFB{k+1}>\phiFB{k}-\beta(1-\alpha)\ee \eta k\ \vee\ f(\ee z{k+1})> \ell_f(\ee x{k+1},\ee z{k+1})+\alpha \ee \eta{k+1}$\\
  \mbox{\qquad}{\tt if} $\phiFB{k+1}>\phiFB{k}-\beta(1-\alpha)\ee \eta k$ {\tt then}\\
  \mbox{\qquad\qquad}$\ee\tau k\leftarrow\ee\tau k/2$, $\ee x{k+1}\leftarrow\ee z k+\ee \tau k\ee sk$;\\
  \mbox{\qquad}{\tt else}\\
  \mbox{\qquad\qquad}$\ee\lambda{k+1}\leftarrow\ee\lambda{k+1}/2$;\\
  \mbox{\qquad}compute $\ee z{k+1}\in T_{\ee\lambda{k+1}}(\ee x{k+1})$, $\ee\eta{k+1}\leftarrow\frac 1{2\ee\lambda{k+1}}\norm{\ee z{k+1}-\ee x{k+1}}^2$;\\
  {\tt end while}
  \end{minipage}\\[1ex]
  7. \begin{minipage}[t]{\myAlgBox}{\tt while} $f(\ee z{k+1})\leq \ell_f(\ee x{k+1},\ee z{k+1})+ \sigma\alpha \ee \eta{k+1}\  \wedge\  2\ee\lambda{k+1}\leq\bar\lambda$ {\tt do}\\
    \mbox{\qquad} compute $\tilde z\in T_{2\ee\lambda{k+1}}(\ee x{k+1})$ and $\tilde \eta\leftarrow\frac1{4\ee\lambda{k+1}}\norm{\tilde z-\ee x{k+1}}^2$.\\
    \mbox{\qquad}{\tt if} $f(\tilde z)> \ell_f(\ee x{k+1},\tilde z) + \alpha\tilde\eta$ {\tt then }\\
    \mbox{\qquad\qquad} proceed with step 8.;\\
    \mbox{\qquad}{\tt else}\\
    \mbox{\qquad\qquad} set $\ee\lambda{k+1}\leftarrow 2\ee\lambda{k+1}$, $\ee z{k+1}\leftarrow\tilde z$, $\ee \eta{k+1}\leftarrow\tilde \eta$;\\
    {\tt end while}\end{minipage}\\[1ex]
 8. Set $k\leftarrow k+1$ and go to step 3.;
\end{algorithm}

The search direction $\ee sk$ will be an (inexact) SCD \ssstar Newton direction and we will discuss later how to compute it in practice.

\begin{remark}Apart from using a different notation there are the following significant differences to PANOC${}^+$ from \cite{MaTh22}:
\begin{enumerate}
  \item In PANOC${}^+$ the search direction is of the form $\ee sk = \ee dk + \ee xk-\ee zk$ with $\norm{\ee dk}\leq D\norm{\ee zk-\ee xk}$. We do not use such a restriction on our search direction $\ee sk$ because we do not know how to choose the constant $D$ in practice without destroying the superlinear convergence properties of the SCD \ssstar Newton method. Further, in our numerical experiments for the nonconvex regression problem  (Subsection \ref{SubSecRegr}) we observed that our method can escape from non-optimal stationary points and in this case the ratio $\norm{\ee sk}/\norm{\ee zk-\ee xk}$ can be arbitrarily large. We refer to Subsection \ref{SubSecRegr} for more details on this phenomenon.
  \item If in the {\tt while}-loop in step 6. both conditions are fulfilled, we decrease $\ee\tau k$ whereas in PANOC${}^+$ the parameter $\ee\lambda{k+1}$ is reduced.  This exchange does not affect the convergence analysis and is mainly motivated by performance reasons. We observed that for poor search direction $\ee sk$, which sometimes occur in the first iterations of the algorithm, the trial points $\ee x{k+1}$ can be far away from a solution and therefore it seems reasonable to favor the reduction of the step size $\ee\tau k$. Therefore we possibly  decrease $\ee \lambda{k+1}$ only at those points which are candidates for the next iterate, i.e., a sufficient reduction in $\phiFB{k+1}$ has been achieved.
  \item We also add  the possibility to increase $\lambda$ in step 7 in contrary to PANOC$^+$, where the produced sequence $\ee \lambda k$ is monotonically  decreasing. Again, this modification is mainly motivated  by performance reasons since  it is well-known that the method of proximal gradients works better with larger values of $\lambda$.
\end{enumerate}
\end{remark}

We now summarize properties of the  iterates $\ee xk$, $\ee zk$ and $\ee \lambda k$ which follow from the termination criteria of the {\tt while} loops in steps 2.,6. and 7.
\begin{lemma}\label{LemOutIterate}Consider the iterates generated by Algorithm \ref{AlgBasGSSN}. The following hold.
 \begin{enumerate}
  \item[(i)]For each $k\geq 0$ there holds $\ee zk\in T_{\ee \lambda k}(\ee xk)$ and
  \begin{align}\label{EqLambda}&f(\ee zk)\leq \ell_f(\ee xk,\ee zk)+\frac \alpha{2\ee\lambda k}\norm{\ee zk-\ee xk}^2,\\
   \label{EqDecrFB1}&\phiFB{k+1}\leq \phiFB{k}-\frac{\beta(1-\alpha)}{2\ee \lambda k}\norm{\ee zk-\ee x k}^2,\end{align}
   where the value of $\phiFB{k+1}$ is taken at step 8.
  \item[(ii)]For each $k\geq 1$ at least one of the following 3 conditions hold:
  \begin{enumerate}
  \item[(a)]$\ee \lambda k>\lb/2$.
  \item[(b)]$f(\ee zk)> \ell_f(\ee xk,\ee zk)+\frac{\sigma\alpha}{2\ee\lambda k}\norm{\ee zk-\ee xk}^2$.
  \item[(c)] There is some $\tilde z\in T_{2\ee \lambda k}(\ee xk)$ such that $f(\tilde z)>\ell_f(\ee xk,\tilde z)+\frac\alpha{4\ee \lambda k}\norm{\tilde z-\ee xk}^2$.
  \end{enumerate}
\end{enumerate}
\end{lemma}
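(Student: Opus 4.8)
All three assertions follow by reading off the exit conditions of the three {\tt while}-loops, combined with the monotonicity $\varphi^{\rm FB}_{\lambda_2}\le\varphi^{\rm FB}_{\lambda_1}$ for $\lambda_1<\lambda_2$ established in Lemma \ref{LemBasicFBE}(iii). The only point that needs care is that step 7 may enlarge $\ee\lambda{k+1}$ and replace $\ee z{k+1},\ee\eta{k+1}$ \emph{after} the two tests of step 6 have already been passed; hence I would track how the relevant quantities evolve through steps 6 and 7 and verify that each claimed estimate survives these updates. Throughout I use that the iterate carrying index $k\ge1$ is the one produced during the pass with loop counter $k-1$.

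For (i), the inclusion $\ee zk\in T_{\ee\lambda k}(\ee xk)$ holds because the algorithm recomputes $\ee zk$ as an element of $T_{\ee\lambda k}(\ee xk)$ each time $\ee\lambda k$ or $\ee xk$ is altered. For \eqref{EqLambda} with $k=0$, the loop of step 2 exits precisely when $f(\ee z0)\le\ell_f(\ee x0,\ee z0)+\alpha\ee\eta0$, which is \eqref{EqLambda} since $\ee\eta0=\frac1{2\ee\lambda0}\norm{\ee z0-\ee x0}^2$. For $k\ge1$, when the loop of step 6 terminates its second disjunct fails, giving $f(\ee zk)\le\ell_f(\ee xk,\ee zk)+\alpha\ee\eta k$; moreover every doubling accepted in step 7 installs a triple $(\tilde z,2\ee\lambda k,\tilde\eta)$ exactly under the test $f(\tilde z)\le\ell_f(\ee xk,\tilde z)+\alpha\tilde\eta$, so \eqref{EqLambda} is reinstated after each accepted update and therefore holds for the final values present at step 8.

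For \eqref{EqDecrFB1}, the failure of the first disjunct of the step-6 loop (counter $k$) upon exit reads exactly $\phiFB{k+1}\le\phiFB{k}-\beta(1-\alpha)\ee\eta k$ for the value of $\ee\lambda{k+1}$ present at that moment. Since step 7 only increases $\ee\lambda{k+1}$ while keeping $\ee x{k+1}$ fixed, Lemma \ref{LemBasicFBE}(iii) shows that $\varphi^{\rm FB}_{\ee\lambda{k+1}}(\ee x{k+1})$ can only decrease, so the inequality persists for the final $\ee\lambda{k+1}$ taken at step 8; recalling $\ee\eta k=\frac1{2\ee\lambda k}\norm{\ee zk-\ee xk}^2$ gives \eqref{EqDecrFB1}. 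For (ii) I would split on how the step-7 loop (counter $k-1$) terminates. If it stops because its guard is false, on entry or after some accepted doublings, then either $2\ee\lambda k>\bar\lambda$, that is (a), or $f(\ee zk)>\ell_f(\ee xk,\ee zk)+\frac{\sigma\alpha}{2\ee\lambda k}\norm{\ee zk-\ee xk}^2$, that is (b). Otherwise it exits through the ``proceed with step 8.'' branch, which is reached precisely when the trial point $\tilde z\in T_{2\ee\lambda k}(\ee xk)$ satisfies $f(\tilde z)>\ell_f(\ee xk,\tilde z)+\alpha\tilde\eta$ with $\tilde\eta=\frac1{4\ee\lambda k}\norm{\tilde z-\ee xk}^2$; since this branch leaves $\ee\lambda k$ unchanged, this is exactly (c). Hence at least one of (a)--(c) holds.

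The main obstacle is precisely this interplay of steps 6 and 7: both tests in step 6 are carried out before step 7 may enlarge $\ee\lambda{k+1}$, so one must confirm that neither \eqref{EqLambda} nor \eqref{EqDecrFB1} is spoiled by the later updates — the former through the acceptance rule governing the doublings in step 7, the latter through the monotonicity of the forward-backward envelope in $\lambda$. Everything else is a direct transcription of the loop guards.
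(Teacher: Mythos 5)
Your proposal is correct and follows exactly the paper's argument: both read \eqref{EqLambda} and the step-6 exit condition off the loop guards, observe that each accepted doubling in step 7 reinstates \eqref{EqLambda}, and use the monotonicity of $\lambda\mapsto\varphi^{\rm FB}_\lambda$ from Lemma \ref{LemBasicFBE}(iii) to show \eqref{EqDecrFB1} survives the increase of $\ee\lambda{k+1}$ in step 7. Your case split for (ii) on the three ways the step-7 loop can terminate is also precisely what the statement encodes; the paper's own proof is just a terser version of the same reasoning.
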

\begin{proof}
  Only condition  \eqref{EqDecrFB1} needs some explanation. The bound \eqref{EqDecrFB1} surely holds with the value of $\phiFB{k+1}$ when the {\tt while} loop at step 6. is terminated. In the {\tt while} loop at step 7. the value $\ee\lambda{k+1}$ can only increase and therefore the bound \eqref{EqDecrFB1} is retained by Lemma \ref{LemBasicFBE}(iii).
\end{proof}

Before stating convergence properties of Algorithm \ref{AlgBasGSSN} we show that the {\tt while} loops are finite together with some other useful properties. In what follows, we index by
$\ee\lambda{0,j},\ee z{0,j}, \ee \eta{0,j}$, $j=0,\ldots,j_0$, the different values of $\ee\lambda 0,\ee z0,\ee\eta0$ encountered in the {\tt while} loop st step 2. Analogously, for each $k$ we denote by $\ee x{k+1,j},\ee\lambda{k+1,j},\ee z{k+1,j}, \ee \eta{k+1,j},\ee \tau{k,j}$, $j=0,\ldots,j_k$, the consecutive values of $\ee x{k+1},\ee\lambda{k+1}, \ee z{k+1},\ee\eta{k+1}$ and $\ee\tau k$ when the test expression in the {\tt while} loop at step 6 is evaluated. Finally, $\ee\lambda{k+1,j},\ee z{k+1,j}, \ee \eta{k+1,j}$, $j=j_k,\ldots,\bar j_k$, are the values of $\ee\lambda{k+1}, \ee z{k+1}$ and $\ee\eta{k+1}$ considered in the course of the {\tt while} loop at step 7.
\begin{lemma}\label{LemConvBasGSSN}
  Consider the iterates generated by Algorithm \ref{AlgBasGSSN}. The following hold.
  \begin{enumerate}
    \item[(i)] The {\tt while} loop at step 2. is finite.
    \item[(ii)] The {\tt while} loop at  step 6. is finite for each $k$.
    \item[(iii)] The {\tt while} loop at  step 7. is finite for each $k$.
    \item[(iv)] If the algorithm stops in step 3. with $\ee xk=\ee zk$ then $\ee xk$ is stationary for $\varphi$, i.e., $0\in\widehat\partial \varphi(\ee xk)$.
    \item[(v)] For each $k\geq 0$ there holds
    \begin{equation}\label{EqDecrFB2}
      \varphi(\ee z{k})+(1-\alpha)\ee\eta{k}\leq \phiFB{k}.
    \end{equation}
    \item[(vi)] For each bounded set $M\subset\R^n$ there is some positive constant $\underline{\lambda}$ such that for every iterate $k\geq 1$ with $\ee xk \in M$ there holds $\ee \lambda k\geq \underline{\lambda}$.
  \end{enumerate}
\end{lemma}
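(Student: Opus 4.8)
The plan is to dispatch the routine items first and concentrate on the finiteness of the step~6 loop, the only genuine obstacle; throughout, the two workhorses are the uniform descent bound of Lemma~\ref{LemBasicFBE}(i) and the boundedness of proximal images of Lemma~\ref{LemBasicFBE}(ii). Part (iii) is immediate: the step~7 loop runs only while $2\ee\lambda{k+1}\leq\bar\lambda$ and each non-terminating pass strictly doubles $\ee\lambda{k+1}$, which is capped by $\bar\lambda$, so only finitely many passes are possible. Part (iv) is the implication recorded right after \eqref{EqFO}: halting at step~3 with $\ee xk=\ee zk$ gives $\ee xk\in T_{\ee\lambda k}(\ee xk)$, whence $0\in\widehat\partial\varphi(\ee xk)$. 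For (v), the termination condition of the step~2 (for $k=0$) resp.\ step~6 (for $k\geq1$) loop is exactly \eqref{EqLambda}; feeding it into Lemma~\ref{LemBasFBE1}(iii) with $L=\alpha/\ee\lambda k$ (so that $\ee\lambda k L=\alpha<1$) gives $\varphi(\ee zk)\leq\phiFB k-\frac{1-\alpha}{2\ee\lambda k}\norm{\ee zk-\ee xk}^2=\phiFB k-(1-\alpha)\ee\eta k$, i.e.\ \eqref{EqDecrFB2}. For (i), as $\ee\lambda0$ is halved the point $\ee z0\in T_{\ee\lambda0}(\ee x0)$ stays in the bounded set $\bigcup_{\lambda\in(0,\lambda_0]}T_\lambda(\ee x0)$ of Lemma~\ref{LemBasicFBE}(ii); taking $M$ to be its union with $\{\ee x0\}$ and applying Lemma~\ref{LemBasicFBE}(i) with parameter $\alpha$, the stopping inequality holds once $\ee\lambda0\leq\hat\lambda_{M,\alpha}$, which occurs after finitely many halvings.

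The crux is (ii), which I would prove by contradiction, assuming the step~6 loop of some iteration never stops and separating its two halving actions. All trial points $\ee x{k+1}=\ee zk+\ee\tau k\ee sk$ lie in the fixed ball $\B_{\bar\rho}(\ee zk)$ and, since $\ee\lambda{k+1}\leq\ee\lambda k\leq\bar\lambda$ during the loop, the associated $\ee z{k+1}\in T_{\ee\lambda{k+1}}(\ee x{k+1})$ stay in a bounded set by Lemma~\ref{LemBasicFBE}(ii); hence Lemma~\ref{LemBasicFBE}(i) yields one threshold $\hat\lambda$ below which the descent inequality of \eqref{EqLambda} at index $k+1$ is automatic. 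Thus $\ee\lambda{k+1}$ is halved only finitely often and eventually freezes at a value $\geq\hat\lambda$, after which only $\ee\tau k$ is halved, forcing $\ee\tau k\downarrow0$ and $\ee x{k+1}\to\ee zk$. Since step~3 did not halt we have $\ee zk\neq\ee xk$, so $\ee\eta k>0$, and combining (v) at index $k$ with Lemma~\ref{LemBasFBE1}(ii) produces the strict gap $\varphi_{\ee\lambda{k+1}}^{\rm FB}(\ee zk)\leq\varphi(\ee zk)\leq\phiFB k-(1-\alpha)\ee\eta k<\phiFB k-\beta(1-\alpha)\ee\eta k$. By strict continuity of $\varphi_{\ee\lambda{k+1}}^{\rm FB}$ (Lemma~\ref{LemBasFBE1}(i)) and $\ee x{k+1}\to\ee zk$, the forward--backward decrease test is eventually satisfied while the descent inequality already holds, so the loop exits, a contradiction. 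The delicate point is precisely that both halving mechanisms must be shown to act only finitely often, and that the factor $\beta<1$ in the acceptance test leaves the strict margin $(1-\beta)(1-\alpha)\ee\eta k>0$ that continuity can exploit.

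Finally, (vi) rests on the trichotomy of Lemma~\ref{LemOutIterate}(ii): for $k\geq1$ either (a) $\ee\lambda k>\bar\lambda/2$, or one of the descent violations (b), (c) holds. For $\ee xk\in M$ put $M':=M\cup\bigcup_{x\in M,\,\lambda\in(0,\bar\lambda]}T_\lambda(x)$, bounded by Lemma~\ref{LemBasicFBE}(ii), which contains $\ee zk$ as well as any $\tilde z\in T_{2\ee\lambda k}(\ee xk)$ entering case (c) (where necessarily $2\ee\lambda k\leq\bar\lambda$). Applying Lemma~\ref{LemBasicFBE}(i) on $M'$ with parameters $\sigma\alpha$ and $\alpha$ shows that (b) fails once $\ee\lambda k\leq\hat\lambda_{M',\sigma\alpha}$ and (c) fails once $2\ee\lambda k\leq\hat\lambda_{M',\alpha}$; hence in every case $\ee\lambda k\geq\underline\lambda:=\min\{\bar\lambda/2,\hat\lambda_{M',\sigma\alpha},\hat\lambda_{M',\alpha}/2\}>0$. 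The whole statement is then assembled by induction on $k$: each iteration's loops terminate by (i)--(iii), producing well-defined iterates satisfying \eqref{EqLambda}, upon which (v) and (vi) are read off.
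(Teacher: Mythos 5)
Your proof is correct and takes essentially the same route as the paper's: the same contradiction argument via Lemma~\ref{LemBasicFBE}(i)--(ii) and continuity of the forward-backward envelope for (i) and (ii), the doubling cap for (iii), the optimality condition \eqref{EqFO} for (iv), and the trichotomy of Lemma~\ref{LemOutIterate}(ii) for (vi). The only cosmetic differences are that you derive (v) directly from \eqref{EqLambda} and Lemma~\ref{LemBasFBE1}(iii) rather than extracting it from the proof of (ii), and in (vi) you invoke Lemma~\ref{LemBasicFBE}(i) abstractly where the paper unfolds the descent lemma with an explicit Lipschitz constant — both give the same lower bound $\underline\lambda$.
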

\begin{proof} ad (i): Assume at the contrary that by backtracking at step 2 we produce  infinite sequences $\ee\lambda{0,j}=2^{-j}\lambda_0$ and $\ee z{0,j}$. Applying Lemma \ref{LemBasicFBE}(ii) with $\bar M=\{\ee x0\}$ and $\bar\lambda=\ee\lambda 0$ we obtain that the set $S:=\{\ee z{0,j}\mv j\in\N\}$ is bounded.  Taking $M=S\cup\{\ee x0\}$, consider $\hat \lambda_{M,\alpha}$ according to Lemma \ref{LemBasicFBE}(i) in order to conclude that the {\tt while} loop  at step 2. is terminated  as soon as $\ee \lambda{0,j}\leq\hat\lambda_{M,\alpha}$.

 ad (ii): Assume that the assertion does not hold and assume that $k\geq 0$ is the first iteration index such that the {\tt while} loop at step 6. is not finite and we compute infinite sequences $\ee\tau{k,j}$, $\ee\lambda{k+1,j}$, $\ee x{k+1,j}$ and $\ee z{k+1,j}$. Since the sequence $\ee x{k+1,j}$, $j\in\N$ is bounded and $\ee\lambda{k+1,j}\leq \ee\lambda k$, by Lemma \ref{LemBasicFBE}(ii), we obtain that the sequence $\ee z{k+1,j}$ is bounded as well. Taking $M=\{\ee x{k+1,j}\mv j\in\N\}\cup \{\ee z{k+1,j}\mv j\in\N\}$ and $\hat\lambda_{M,\alpha}$ according to Lemma \ref{LemBasicFBE}(i), we conclude that $\ee \lambda{k+1,j}\geq\min\{\hat\lambda_{M,\alpha}/2,\ee\lambda k\}$ $\forall j$. Hence  there is some $\lambda\in[\min\{\hat\lambda_{M,\alpha}/2,\ee\lambda k\},\ee\lambda k]$ such that $\ee \lambda{k+1,j}=\lambda$ for all $j$ sufficiently large.  Thus $\ee\tau{k,j}\to 0$ and
 $\lim_{j\to\infty}\phiFB{{k+1,j}}=\lim_{j\to\infty}\varphi^{\rm FB}_\lambda(\ee x{k+1,j})=\varphi_\lambda^{\rm FB}(\ee zk)$ by continuity of the forward-backward envelope $\varphi_\lambda^{\rm FB}$. Since $f(\ee zk)\leq\ell_f(\ee xk,\ee zk)+\frac\alpha{2\ee\lambda k}\norm{\ee zk-\ee xk}^2$, we obtain from Lemma \ref{LemBasFBE1} that
 \begin{equation}\label{EqAuxBnd1}\varphi_\lambda^{\rm FB}(\ee zk)\leq \varphi(\ee zk)\leq \phiFB{k}-\frac{1-\ee\lambda k(\alpha/\ee \lambda k)}{2\ee \lambda k}\norm{\ee zk-\ee xk}^2=\phiFB{k}-(1-\alpha)\ee \eta k.\end{equation}
 From $\beta<1$ we deduce that for $j$ sufficiently large we have
 \begin{equation*}\phiFB{{k+1,j}}=\varphi^{\rm FB}_\lambda(\ee x{k+1,j})<\varphi_\lambda^{\rm FB}(\ee zk)+(1-\beta)(1-\alpha)\ee \eta k
 \leq \phiFB{k}-\beta(1-\alpha)\ee \eta k,
 \end{equation*}
 implying that the {\tt while} loop at step 6. is terminated.

 ad (iii) This clearly holds, since in the {\tt while} loop  we set $\ee \lambda{k+1,j+1}=2\ee\lambda{k+1,j}$ so that after finitely many cycles there holds $2\ee \lambda{k+1,j}>\bar\lambda$.

 ad (iv): By first-order optimality condition at $\ee z k=\ee xk$ we have
 \[0\in \widehat\partial_z\psi^{\rm FB}_{\ee \lambda k}(\ee xk,\ee zk)=\nabla f(\ee xk)+\frac 1{\ee \lambda k}(\ee zk-\ee xk)+\widehat\partial g(\ee zk)=\widehat\partial \varphi(\ee zk).\]

ad (v): The inequality in \eqref{EqDecrFB2} was already established in \eqref{EqAuxBnd1}.

ad (vi): Let $K:=\{k\geq 1\mv \ee xk\in M\}$. Applying Lemma \ref{LemBasicFBE}(ii) with $\bar M=M$ and $\lb$ from Algorithm \ref{AlgBasGSSN}, we obtain that all points $\ee zk$, $k\in K$ belong to the bounded set $S$ given by \eqref{EqS}. Let $L$ denote the Lipschitz constant of $\nabla f$ on the compact convex set $\cl\co(M\cup S)$. We claim that the assertion holds with $\underline\lambda=\min\{\lb/2,\sigma\alpha/L\}$. Consider the three possibilities according to Lemma \ref{LemOutIterate}(ii). If $\ee \lambda k>\lb/2$ then our claim certainly holds true. Hence assume that $\ee \lambda k\leq \lb/2$. If condition (b) of Lemma \ref{LemOutIterate}(ii) holds, together with the descent lemma we obtain
\[\ell_f(\ee xk,\ee zk)+\frac L2\norm{\ee zk-\ee xk}^2\geq f(\ee zk) > \ell_f(\ee xk,\ee zk)+\frac {\sigma\alpha}{2\ee \lambda k}\norm{\ee zk-\ee xk}^2\]
and $\ee \lambda k>\alpha\sigma/L\geq \underline{\lambda}$ follows. Finally, in case of Lemma \ref{LemOutIterate}(ii)(c) there holds $\tilde z\in S$ due to $2\ee \lambda k\leq\lb$ and therefore as before
\[\ell_f(\ee xk,\tilde z)+\frac L2\norm{\tilde z-\ee xk}^2\geq f(\tilde z) > \ell_f(\ee xk,\tilde z)+\frac {\alpha}{4\ee \lambda k}\norm{\tilde z-\ee xk}^2,\]
showing $\ee \lambda k> \frac 12 \alpha/L\geq \sigma\alpha/L\geq \underline\lambda$.
\end{proof}
In the following theorem we summarize the convergence properties of Algorithm \ref{AlgBasGSSN}. To this aim we assume that Algorithm \ref{AlgBasGSSN} does not prematurely stop at a stationary point and produces an infinite sequence of iterates.
\begin{theorem}\label{ThConvBasGSSN}
  Consider the iterates generated by Algorithm \ref{AlgBasGSSN}. Then either $\varphi(\ee zk)\to-\infty$ or the following hold.
  \begin{enumerate}
    \item[(i)] The sequence $\phiFB k$ is strictly  decreasing and converges to some finite value $\bar\varphi \geq \inf \varphi$.
    \item[(ii)] $\sum_{k\in\N}\ee\eta k=\sum_{k\in \N}\frac 1{2\ee \lambda k}\norm{\ee xk-\ee zk}^2<\infty$ and therefore $\lim_{k\to\infty}\ee \eta k=0$.
    \item[(iii)] $\lim_{k\to\infty}\norm{\ee zk-\ee xk}=0$.
    \item[(iv)] Every accumulation point of the sequence $\ee xk$ is also an accumulation point of the sequence $\ee zk$ and vice versa.
    \item[(v)] Every accumulation point $\xb$ of the sequence $\ee xk$ satisfies $\xb\in T_{\underline \lambda}(\xb)$ for some $\underline\lambda\in (0,\lb)$. In particular, $\xb$ is a stationary point for $\varphi$ fulfilling $0\in\hat \partial\varphi(\xb)$.
  \end{enumerate}
\end{theorem}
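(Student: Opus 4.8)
The plan is to read the first assertions off the descent inequality \eqref{EqDecrFB1} and then bootstrap the rest. Since the algorithm does not stop in step~3., we have $\ee xk\neq\ee zk$, so $\ee\eta k=\frac1{2\ee\lambda k}\norm{\ee zk-\ee xk}^2>0$ for every $k$, and \eqref{EqDecrFB1} shows $\phiFB k$ is strictly decreasing; a monotone sequence has a limit $\bar\varphi\in[-\infty,\infty)$. Dropping the nonnegative term in \eqref{EqDecrFB2} gives $\varphi(\ee zk)\leq\phiFB k$, and combining this with $\inf\varphi\leq\varphi(\ee zk)$ yields $\inf\varphi\leq\phiFB k$ for all $k$, hence $\inf\varphi\leq\bar\varphi$ in the limit. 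If $\bar\varphi=-\infty$ then $\varphi(\ee zk)\to-\infty$, the first alternative; otherwise $\bar\varphi$ is finite and (i) holds. I assume the latter from now on.

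For (ii) I would telescope \eqref{EqDecrFB1}: summing $\beta(1-\alpha)\ee\eta k\leq\phiFB k-\phiFB{k+1}$ over $k=0,\dots,N$ gives $\beta(1-\alpha)\sum_{k=0}^N\ee\eta k\leq\phiFB0-\bar\varphi<\infty$, so $\sum_k\ee\eta k<\infty$ and $\ee\eta k\to0$. Then (iii) is immediate from $\norm{\ee zk-\ee xk}^2=2\ee\lambda k\,\ee\eta k\leq2\lb\,\ee\eta k\to0$, using that the algorithm keeps $\ee\lambda k\leq\lb$. Assertion (iv) follows at once: if $\ee x{k_j}\to\xb$ then $\ee z{k_j}=\ee x{k_j}+(\ee z{k_j}-\ee x{k_j})\to\xb$ by (iii), and symmetrically.

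The substantial part is (v). Given an accumulation point $\xb$ of $\ee xk$, I pick a subsequence with $\ee x{k_j}\to\xb$; by (iv) also $\ee z{k_j}\to\xb$. Applying Lemma \ref{LemConvBasGSSN}(vi) to the bounded set $M=\{\ee x{k_j}\}$ yields a constant $\underline\lambda_M>0$ with $\ee\lambda{k_j}\geq\underline\lambda_M$, and together with $\ee\lambda{k_j}\leq\lb$ I may pass to a further subsequence so that $\ee\lambda{k_j}\to\lambda^\ast\in[\underline\lambda_M,\lb]$. Since $\ee z{k_j}\in T_{\ee\lambda{k_j}}(\ee x{k_j})$ minimizes $\psi^{\rm FB}_{\ee\lambda{k_j}}(\ee x{k_j},\cdot)$, for every fixed $z$ one has
\[\ell_f(\ee x{k_j},\ee z{k_j})+\ee\eta{k_j}+g(\ee z{k_j})\leq\ell_f(\ee x{k_j},z)+\frac1{2\ee\lambda{k_j}}\norm{z-\ee x{k_j}}^2+g(z).\]
Taking $\liminf$ and using continuity of $f$ and $\nabla f$, $\ee\eta{k_j}\to0$, $\lambda^\ast>0$, and lower semicontinuity of $g$ on the left, I obtain $f(\xb)+g(\xb)\leq\psi^{\rm FB}_{\lambda^\ast}(\xb,z)$ for all $z$; since $\psi^{\rm FB}_{\lambda^\ast}(\xb,\xb)=f(\xb)+g(\xb)$, this means $\xb\in T_{\lambda^\ast}(\xb)$. (Finiteness of $g(\xb)$ is certified because $g(\ee z{k_j})=\varphi(\ee z{k_j})-f(\ee z{k_j})\leq\phiFB{k_j}-f(\ee z{k_j})$ stays bounded above.) Finally, decreasing $\lambda$ only enlarges the quadratic term of $\psi^{\rm FB}_\lambda(\xb,\cdot)$, so the fixed-point relation persists for every $\underline\lambda\in(0,\lambda^\ast]$; choosing such $\underline\lambda<\lb$ gives $\xb\in T_{\underline\lambda}(\xb)$, whence $0\in\widehat\partial\varphi(\xb)$ by the implication recorded right after \eqref{EqFO}.

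I expect the main obstacle to be the limiting argument in (v): because $g$ is merely lsc, the minimizing inequality can only be pushed through with a $\liminf$, and one must separately guarantee both that the proximal parameters $\ee\lambda{k_j}$ stay bounded away from $0$ (this is exactly Lemma \ref{LemConvBasGSSN}(vi)) and that the limiting parameter can be taken strictly below $\lb$ (secured by the monotonicity of $\lambda\mapsto\psi^{\rm FB}_\lambda(\xb,\cdot)$).
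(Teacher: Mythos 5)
Your proof is correct, and parts (i)--(iv) coincide with the paper's argument (telescoping \eqref{EqDecrFB1}, using \eqref{EqDecrFB2} for the lower bound, and $\ee\lambda k\le\lb$ for (iii)). The only genuine divergence is in (v). The paper argues by contradiction: assuming $\xb\notin T_{\underline\lambda}(\xb)$ it produces a competitor $\tilde z$ with $\psi^{\rm FB}_{\underline\lambda}(\xb,\tilde z)<\varphi(\xb)-2\epsilon$, then combines the continuity of $\varphi^{\rm FB}_{\underline\lambda}$ (Lemma \ref{LemBasFBE1}(i)), the monotonicity $\phiFB{k_i}\le\varphi^{\rm FB}_{\underline\lambda}(\ee x{k_i})$ from Lemma \ref{LemBasicFBE}(iii), the bound \eqref{EqDecrFB2} and lower semicontinuity of $\varphi$ to reach $\liminf\phiFB{k_i}<\liminf\phiFB{k_i}$. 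You instead pass to the limit directly in the variational inequality defining $T_{\ee\lambda{k_j}}(\ee x{k_j})$, after extracting a convergent subsequence $\ee\lambda{k_j}\to\lambda^\ast>0$, using lower semicontinuity of $g$ on the left-hand side, and then propagate the fixed-point property down to smaller $\lambda$ via the monotonicity of the quadratic term. Both routes rest on the same two pillars (Lemma \ref{LemConvBasGSSN}(vi) to keep $\lambda$ bounded away from $0$, and $\ee\eta k\to0$); your direct limit argument avoids invoking the continuity of the forward-backward envelope but requires the subsequence extraction for $\lambda$ and the explicit check that $g(\xb)<\infty$, both of which you handle correctly. Either proof is complete.
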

\begin{proof}
ad (i): By \eqref{EqDecrFB1}, the sequence $\phiFB k$ is strictly decreasing and $\limsup_{k\to\infty}\phiFB k\geq \limsup_{k\to\infty}\varphi(\ee zk)$ by \eqref{EqDecrFB2}. Hence, if $\limsup_{k\to\infty}\varphi(\ee zk)>-\infty$ then $\lim_{k\to\infty}\phiFB k=\limsup_{k\to\infty}\phiFB k$ is finite and satisfies $\lim_{k\to\infty}\phiFB k\geq \inf \varphi$.

ad (ii): From \eqref{EqDecrFB1} we deduce
\[\phiFB{k+1}\leq \phiFB 0-\sum_{i=0}^k(1-\alpha)\beta\ee \eta i\ \forall k\]
from which  the assertion follows.

ad (iii): Follows from (ii), since $\ee \lambda k\leq \bar\lambda$ $\forall k$.

ad (iv): Follows from (iii).

ad (v): Let $\xb$ be an accumulation point of $\ee xk$ and let $r>0$ be arbitrarily fixed. Considering a subsequence $\ee x{k_i}$ converging  to $\xb$, we may assume that $\ee x{k_i}\in\B_r(\xb)$, $\forall k_i$. Then, by Lemma \ref{LemConvBasGSSN}(vi) there is some $\underline\lambda$ such that $\ee\lambda{k_i}\geq \underline\lambda$ $\forall k_i$.
Now assume on the contrary that $\xb\not\in T_{\underline \lambda}(\xb)$. Then there is some $\tilde z\in T_{\underline \lambda}(\xb)$ and some $\epsilon>0$ such that
\[\psi_{\underline\lambda}^{\rm FB}(\xb,\tilde z)<\psi_{\underline\lambda}^{\rm FB}(\xb,\xb)-2\epsilon=\varphi(\xb)-2\epsilon,\]
implying that for all $k_i$ sufficiently large we have
\[\varphi_{\underline\lambda}^{\rm FB}(\ee x{k_i})\leq \psi_{\underline\lambda}^{\rm FB}(\ee x{k_i},\tilde z)\leq\varphi(\xb)-\epsilon.\]

Since $\varphi$ is lsc,  $\ee z{k_i}\to\xb$ and taking into account  \eqref{EqDecrFB2} and Lemma \ref{LemBasicFBE}(iii), we obtain the contradiction
\begin{equation*} \liminf_{i\to\infty}\phiFB{k_i}\leq \varphi_{\underline\lambda}^{\rm FB}(\ee x{k_i})\leq\varphi(\xb)-\epsilon < \varphi(\xb) \leq \liminf_{i\to\infty}\varphi(\ee z{k_i}) \leq \liminf_{i\to\infty}\phiFB{k_i}.\end{equation*}
Hence $\xb\in T_{\underline\lambda}(\xb)$ and $0\in\widehat\partial\varphi(\xb)$ follows from \eqref{EqFO}.
\end{proof}

At the end of this section we want to discuss some termination criteria for Algorithm \ref{AlgBasGSSN}. Given the iterate $\ee xk$, $\ee zk\in T_{\ee\lambda k}(\ee xk)$, by the first-order optimality condition \eqref{EqFO} we have
\begin{equation}\label{Eqz*g}\ee{z^*_g}k :=-\nabla f(\ee x{k})-\frac1{\ee \lambda k}(\ee zk-\ee xk)\in\widehat\partial g(\ee zk)\end{equation}
and therefore
\begin{equation}\label{Eqz*k}
  \ee{z^*}k:=\nabla f(\ee zk)+\ee {z^*_g}k\in\widehat\partial \varphi(\ee zk).
\end{equation}
Hence, $\dist{0,\partial\varphi(\ee zk)}\leq\norm{\ee {z^*}k}$ and thus the standard relative termination criterion
\[\norm{\ee{z^*}k}\leq\epsilon\norm{\ee{z^*}0}\]
is suitable. However, this termination criterion might be too stringent if the starting point $\ee x0$ is close to a solution of the problem and therefore the termination criterion should be modified to
\begin{equation}
  \norm{\ee{z^*}k}\leq\epsilon\max\{{\rm typ\_subgr},\norm{\ee{z^*}0}\},
\end{equation}
where ${\rm typ\_subgr}$ denotes the magnitude of a ``typical'' subgradient. If such a value is not at hand it can be estimated by the value $\norm{\ee {z^*}0}$ obtained by running steps 1. and 2. of Algorithm BasGSSN with a randomly generated starting point $\ee x0$.

In our implementation we use an alternative  termination criterion. As we are looking for a solution of the inclusion
\[0\in F(x,z):=\begin{pmatrix}
  \nabla f(x)+\partial g(z)\\x-z
\end{pmatrix}\]
and $(\frac 1{\ee\lambda k}(\ee xk-\ee zk), \ee xk-\ee zk)^T \in F(\ee xk,\ee zk)$ by \eqref{Eqz*g}, a stopping criterion of the form
\begin{equation}\label{EqTermRes}
 \ee rk := \left(1+\frac 1{\ee\lambda k}\right)\norm{\ee xk-\ee zk}\leq\epsilon\max\left\{{\rm typ\_val},\ee r0\right\}
\end{equation}
can be used, where ${\rm typ\_val}$ is the magnitude of a ``typical'' value of $(1+\frac 1\lambda)\dist{x,T_\lambda(x)}$.

Note that by Theorem \ref{ThConvBasGSSN} we can only ensure that the termination criterion \eqref{EqTermRes} is achieved in finite time if the sequence $\ee \lambda k$ has an accumulation point which is bounded away from $0$. Apart from the case when $\nabla f$ is Lipschitzian on $\R^n$, by Lemma \ref{LemConvBasGSSN}(vi) this is also the case when the produced sequence $\ee xk$ (or $\ee zk$) has at least one accumulation point. From \eqref{EqDecrFB1}, \eqref{EqDecrFB2} we immediately obtain that $\varphi(\ee zk)\leq\phiFB0$ $\forall k$. Hence, if $\varphi$ is level bounded, the produced sequence $\ee zk$ remains bounded and finite termination is guaranteed.
In case that $\varphi$ is not level-bounded or is even not bounded below, we can enforce finiteness of Algorithm \ref{AlgBasGSSN} by adding the stopping conditions
\begin{equation}\label{EqStoppCond}\ee\eta k \leq\epsilon\big(\phiFB 0-\phiFB k\big)\quad\vee\quad \varphi(\ee zk)\leq\varphi_{\min}.\end{equation}

\begin{remark}
  Since Algorithm \ref{AlgBasGSSN} is a modification of PANOC$^+$, many of the assertions of Lemma \ref{LemConvBasGSSN} and Theorem \ref{ThConvBasGSSN} follow also from \cite{MaTh22}. The only difference is that we can prove stationarity of accumulation points of the sequence $\ee xk$ (or $\ee zk$) whereas in \cite{MaTh22} this is only shown under the additional assumption that $\ee xk$ remains bounded. On the other hand, since we do not impose some bound of the form $\norm{\ee sk}\leq D\norm{\ee zk-\ee xk}$ on the search directions $\ee sk$ like in PANOC$^+$, we cannot prove that $\liminf_{k\to\infty} \frac 1{\ee \lambda k}\norm{\ee zk-\ee xk}=0$.
\end{remark}
\section{Combining BasGSSN with the SCD semismooth$^{*}$ Newton method (GSSN)}

We will refer to GSSN (Globalized SCD Semismooth$^{*}$ Newton method) when we use a realization of the basic Algorithm \ref{AlgBasGSSN} with search direction $\ee sk$ related to the SCD semismooth$^{*}$ Newton direction given by \eqref{EqSCDNewtonDir}. To this aim we assume throughout this section that Assumption \ref{AssProx} is fulfilled.

Given the iterate $\ee xk$, $\ee zk\in T_{\ee\lambda k}(\ee xk)$ we will take  the point $(\ee zk,\ee {z^*}k)\in\gph \partial \varphi$, with $\ee {z^*}k$ given by \eqref{Eqz*k},
as approximation step in the SCD \ssstar Newton method applied to the inclusion $0\in\partial\varphi(x)$.
Selecting two matrices $(\ee Pk,\ee Wk)\in \M_{P,W}\partial\varphi(\ee zk, \ee{z^*}k)$, it is near at hand to take as search direction $\ee sk$ in Algorithm \ref{AlgBasGSSN} the \SCD \ssstar Newton direction given as the solution of the linear equation
\begin{equation}\label{EqLinSystNewtonStep}\ee Wk s=-\ee Pk\ee{z^*}k.\end{equation}
However, from the viewpoint of global convergence this is not feasible. We do not know in general, whether this system has a solution and, if it has a solution, its norm is less or equal than the given radius $\bar\rho$. Further it is also possible that the problem dimension $n$ is so large that we cannot solve the linear system \eqref{EqLinSystNewtonStep} exactly and we must confine ourselves with some approximate solution produced by an iterative method.

Hence, for the moment we assume that our search direction $\ee sk$ satisfies $\ee sk\in\rge \ee Pk$, $\norm{\ee sk}\leq\bar\rho$ and is some approximate solution of \eqref{EqLinSystNewtonStep}. We denote the relative residual by
\begin{equation}\label{EqRelRes}\ee\xi k:=\frac{\norm{\ee Wk \ee sk+\ee Pk\ee{z^*}k}}{\norm{\ee {z^*}k}}.
\end{equation}

We now prove a very general result on superlinear convergence of our method. It only depends on the relative residuals $\ee \xi k$ but not on the way how we actually compute $\ee sk$.

\begin{theorem}\label{ThSuperLinConv} Consider the iterates generated by Algorithm \ref{AlgBasGSSN} and suppose that $\ee sk\in \rge\ee Pk$ $\forall k$ as described above. Assume that $\varphi(\ee zk)$ is bounded below and assume that the sequence $\ee xk$ has a limit point $\xb$ which is a local minimizer for $\varphi$. Further suppose that $\partial\varphi$ is both \SCD regular and \SCD \ssstar at $(\xb,0)$.
If  for every subsequence $\ee x{k_i}\to  \xb$ the corresponding subsequence of relative residuals $\ee \xi {k_i}$ converges to $0$,
then $\ee xk$ converges superlinearly to $\xb$ and for all $k$ sufficiently large there holds $\ee x{k+1}=\ee zk+\ee sk$.
\end{theorem}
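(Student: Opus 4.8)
The plan is to show that once some iterate enters a small ball around $\xb$, every subsequent step is a full (unit stepsize) inexact Newton step and the iterates contract superlinearly. First I collect what Theorem \ref{ThConvBasGSSN} already yields: since $\varphi(\ee zk)$ is bounded below, the alternative $\varphi(\ee zk)\to-\infty$ is excluded, so $\phiFB k$ decreases to a finite limit, $\sum_k\ee\eta k<\infty$ and $\norm{\ee zk-\ee xk}\to 0$. The approximation point is $(\ee zk,\ee{z^*}k)\in\gph\partial\varphi$ with $\ee{z^*}k=\nabla f(\ee zk)-\nabla f(\ee xk)-\frac1{\ee\lambda k}(\ee zk-\ee xk)$; strict continuity of $\nabla f$ and the lower bound $\ee\lambda k\geq\underline\lambda$ from Lemma \ref{LemConvBasGSSN}(vi), valid on a bounded neighbourhood of $\xb$, give $\norm{\ee{z^*}k}\leq(L+1/\underline\lambda)\norm{\ee zk-\ee xk}$. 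Because $\partial\varphi$ is \SCD \ssstar and \SCD regular at $(\xb,0)$, Proposition \ref{Prop_sSR} provides strong metric subregularity, so $\norm{\ee zk-\xb}\leq\kappa\norm{\ee{z^*}k}$ near $\xb$ for some $\kappa>\scdreg\partial\varphi(\xb,0)$, whence $\norm{(\ee zk,\ee{z^*}k)-(\xb,0)}\leq\sqrt{1+\kappa^2}\,\norm{\ee{z^*}k}$.

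Next I estimate one Newton step. With $\ee{L^*}k=\rge(\ee Pk,\ee Wk)\in\Z_n^{\rm reg}$, the exact direction is $\hat s^{(k)}=-C_{\ee{L^*}k}^T\ee{z^*}k\in\rge\ee Pk$, while the computed $\ee sk\in\rge\ee Pk$ solves $\ee Wk\ee sk=-\ee Pk\ee{z^*}k+\ee rk$ with $\ee rk\in\rge\ee Pk$ and $\norm{\ee rk}=\ee\xi k\norm{\ee{z^*}k}$; since the restriction of $\ee Wk$ to $\rge\ee Pk$ has inverse $C_{\ee{L^*}k}^T$, this gives $\norm{\ee sk-\hat s^{(k)}}\leq\norm{C_{\ee{L^*}k}}\ee\xi k\norm{\ee{z^*}k}$, and by Lemma \ref{Lem_scdreg} the norms $\norm{C_{\ee{L^*}k}}$ stay bounded near $(\xb,0)$. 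Applying Proposition \ref{PropConvNewton} to $\hat s^{(k)}$ and adding the residual term together with the bound for $\norm{(\ee zk,\ee{z^*}k)-(\xb,0)}$ above, I obtain
\[\norm{\ee zk+\ee sk-\xb}\leq\theta_k\norm{\ee{z^*}k},\]
where $\theta_k\to 0$ whenever $\ee xk\to\xb$, because the \ssstar modulus in Proposition \ref{PropConvNewton} tends to $0$ and, by hypothesis, the relative residuals $\ee\xi k$ along any subsequence $\ee x{k_i}\to\xb$ tend to $0$.

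The crux is to show that for $\ee xk$ close enough to $\xb$ the line search in step 6 accepts the unit step $\ee\tau k=1$, i.e.\ $\ee x{k+1}=\ee zk+\ee sk$. Condition (b) only forces $\ee\lambda{k+1}$ below the descent-lemma threshold and hence stabilises at some $\ee\lambda{k+1}\geq\lambda_{\min}>0$ while keeping $\tau=1$; the delicate point is the sufficient-decrease test (a). For this I use the unconditional overestimate obtained by inserting $z=\xb$ into $\psiFBp{k+1}{\xb}$ and the descent lemma for $\ell_f(\cdot,\xb)$,
\[\phiFB{k+1}\leq\psiFBp{k+1}{\xb}\leq\varphi(\xb)+\tfrac12\big(L+\tfrac1{\ee\lambda{k+1}}\big)\norm{\ee x{k+1}-\xb}^2,\]
which is valid throughout the $\lambda$-reduction since $\ee\lambda{k+1}\geq\lambda_{\min}$. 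On the other hand, Lemma \ref{LemConvBasGSSN}(v) together with $\varphi(\ee zk)\geq\varphi(\xb)$ gives $\phiFB k-\beta(1-\alpha)\ee\eta k\geq\varphi(\xb)+(1-\beta)(1-\alpha)\ee\eta k$. Because $\norm{\ee x{k+1}-\xb}^2\leq\theta_k^2(L+1/\underline\lambda)^2\norm{\ee zk-\ee xk}^2$ by the previous paragraph while $\ee\eta k\geq\frac1{2\bar\lambda}\norm{\ee zk-\ee xk}^2$, the common factor $\norm{\ee zk-\ee xk}^2$ cancels and (a) holds once $\theta_k$ is small enough. Hence both tests pass at $\tau=1$; the subsequent step 7 only enlarges $\lambda$, leaving $\ee x{k+1}$ unchanged and preserving (a) by Lemma \ref{LemBasicFBE}(iii).

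Finally I assemble the capture argument. Along a subsequence $\ee x{k_i}\to\xb$ one has $\norm{\ee z{k_i}-\ee x{k_i}}\to 0$, hence $\norm{\ee{z^*}{k_i}}\to 0$ and $\theta_{k_i}\to 0$; fixing an index $k_0$ in this subsequence with $\ee x{k_0}$ deep enough inside a ball $\B_\delta(\xb)$ on which strong subregularity, the bound on $\norm{C_{\ee{L^*}k}}$, and a local calmness estimate $\norm{\ee zk-\ee xk}\leq\ell\norm{\ee xk-\xb}$ for the forward-backward operator all hold, I propagate by induction: the unit step is taken and $\norm{\ee x{k+1}-\xb}\leq\theta_k\norm{\ee{z^*}k}\leq\theta_k(L+1/\underline\lambda)\ell\norm{\ee xk-\xb}$ keeps $\ee x{k+1}$ in $\B_\delta(\xb)$ and drives the ratio $\norm{\ee x{k+1}-\xb}/\norm{\ee xk-\xb}$ to $0$. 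This yields $\ee xk\to\xb$ Q-superlinearly with $\ee x{k+1}=\ee zk+\ee sk$ for all large $k$. I expect the main obstacle to be precisely this unit-step acceptance in step 6: reconciling the globalization line search with the fast local step, which hinges on the quadratic \textsc{fbe} overestimate cancelling against $\ee\eta k$. The auxiliary calmness of $T_\lambda$ at $\xb$ (the forward-backward residual upper bound), needed only to turn the contraction into the Q-superlinear \emph{rate}, is the other point requiring care, as it rests on local single-valued Lipschitz behaviour of the proximal operator at the minimizer.
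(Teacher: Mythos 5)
Your overall architecture matches the paper's proof quite closely (prox point as approximation step, the estimate of Proposition \ref{PropConvNewton} plus a $\norm{C_{\ee{L^*}k}}\ee\xi k\norm{\ee{z^*}k}$ term for the inexactness, acceptance of the unit step by comparing the majorization $\phiFB{k+1}\le\psiFBp{k+1}{\xb}$ against the sufficient-decrease threshold, then a capture/induction argument), and your use of strong metric subregularity to collapse $\norm{(\ee zk-\xb,\ee{z^*}k)}$ into a multiple of $\norm{\ee{z^*}k}$ is a legitimate, even slightly cleaner, way to organize the line-search estimate. However, there is one genuine gap: the estimate $\norm{\ee zk-\ee xk}\le\ell\norm{\ee xk-\xb}$ (equivalently $\norm{\ee zk-\xb}\le c_1\norm{\ee xk-\xb}$), which you need both to close the induction keeping the iterates in $\B_\delta(\xb)$ and to convert the contraction $\norm{\ee x{k+1}-\xb}\le\theta_k c_2\norm{\ee zk-\ee xk}$ into a superlinear rate in $\norm{\ee xk-\xb}$, is simply postulated. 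The justification you gesture at --- ``local single-valued Lipschitz behaviour of the proximal operator at the minimizer'' --- is not available under the hypotheses of the theorem: for a general prox-bounded lsc $g$ the mapping $T_\lambda$ need be neither single-valued nor Lipschitz near $\xb$, and such behaviour would essentially amount to tilt stability, which is assumed only in Corollary \ref{CorSuperlinConv2}, not here. Nor can you recover the bound from strong subregularity alone: $\norm{\ee zk-\xb}\le\kappa\norm{\ee{z^*}k}\le\kappa c_2\norm{\ee zk-\ee xk}\le\kappa c_2(\norm{\ee zk-\xb}+\norm{\ee xk-\xb})$ only closes when $\kappa c_2<1$, which there is no reason to expect.

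The paper fills exactly this hole with the quadratic growth condition: Proposition \ref{Prop_sSR} together with \cite[Corollary 3.5]{DruMoNhg14} yields $\varphi(x)\ge\varphi(\xb)+C\norm{x-\xb}^2$ on $\B_\delta(\xb)$, and combining this with \eqref{EqDecrFB2} and the majorization $\phiFB k\le\psiFBp k{\xb}\le\varphi(\xb)+\frac{L+1/\underline\lambda}{2}\norm{\ee xk-\xb}^2$ gives
\[
C\norm{\ee zk-\xb}^2\le\varphi(\ee zk)-\varphi(\xb)\le\phiFB k-\varphi(\xb)\le\frac{L+1/\underline\lambda}{2}\norm{\ee xk-\xb}^2,
\]
i.e.\ $\norm{\ee zk-\xb}\le c_1\norm{\ee xk-\xb}$ with $c_1=\sqrt{(L+1/\underline\lambda)/(2C)}$, from which your calmness estimate follows with $\ell=1+c_1$. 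Inserting this derivation (which uses only the stated hypotheses) repairs your argument; the remainder of your proposal, including the unit-step acceptance where the factor $\norm{\ee zk-\ee xk}^2$ cancels against $\ee\eta k$, is sound.
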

\begin{proof}
    By Proposition \ref{Prop_sSR} and \cite[Corollary 3.5]{DruMoNhg14} it follows that the quadratic growth condition holds at the local minimizer $\xb$, i.e., there exists some reals $C,\delta>0$ such that
    \begin{equation}\varphi(x)\geq \varphi(\xb)+C\norm{x-\xb}^2\ \forall x\in \B_\delta(\xb).\label{EqQuadrGrowth}\end{equation}
    Consider the set
    \[S:=\bigcup_{\AT{x\in \B_{\delta/2}(\xb)}{\lambda\in (0,\lb]}}T_\lambda(x),\]
    which is bounded by Lemma \ref{LemBasicFBE}(ii), and let $L$ denote the Lipschitz constant of $\nabla f$ on the compact convex set $\cl\co(\B_{\delta/2}(\xb)\cup S)$.
    By Lemma \ref{LemConvBasGSSN}(vi) there is some $\underline \lambda>0$ such that $\ee \lambda k\geq \underline\lambda$ whenever $\ee xk\in\B_{\delta/2}(\xb)$ and a close look on the proof of Lemma \ref{LemConvBasGSSN}(vi) shows that we can take $\underline\lambda=\min\{\lb/2,\sigma\alpha/L\}$. By Theorem \ref{ThConvBasGSSN} we know that $\lim_{k\to\infty}\norm{z_k-x_k}=0$ and hence there is some index $k_\delta\geq 1$ such that $\norm{z_k-x_k}<\delta/2$ $\forall k\geq k_\delta$.
   Next consider the index set $K:=\{k\geq k_\delta\mv \ee xk\in \B_{\delta/2}(\xb)\}$ and we claim that
   \[\norm{\ee zk-\xb}\leq c_1\norm{\ee xk -\xb}\mbox{ with }c_1:=\sqrt{\frac{L+1/\underline{\lambda}}{2C}}\ \forall k\in K.\]
  Indeed, for arbitrarily fixed  $k \in K$ we have $\ee zk\in\B_\delta(\xb)$ and $\ee \lambda k> \underline{\lambda}$ implying
  \begin{align*}\phiFB k&\leq \psiFBp k{\xb}\\
  &=\varphi(\xb)+f(\ee xk)+\nabla f(\ee xk)(\xb-\ee xk)-f(\xb)+ \frac 1{2\ee\lambda k}\norm{\xb-\ee xk}^2\\
  &\leq \varphi(\xb)+\frac{L+1/\underline{\lambda}}2\norm{\xb-\ee xk}^2.
  \end{align*}
  Using the bound \eqref{EqDecrFB2} and the quadratic growth condition we conclude that
  \begin{align*}
    C\norm{\ee zk-\xb}^2\leq \varphi(\ee zk)-\varphi(\xb)\leq \phiFB k-\varphi(\xb)-(1-\alpha)\ee\eta k\leq \frac{L+1/\underline{\lambda}}2\norm{\xb-\ee xk}^2
  \end{align*}
  and the claimed inequality follows.

 Further, by \eqref{Eqz*k} we have
  \begin{align}\nonumber\norm{\ee {z^*}k}&=\norm{\nabla f(\ee zk)+\ee {z_g^*}k}=\norm{\nabla f(\ee zk)-\nabla f(\ee xk)-\frac1{\ee \lambda k}(\ee zk-\ee xk)}\\
  \nonumber&\leq \norm{\nabla f(\ee zk)-\nabla f(\ee xk)}+\frac1{\ee \lambda k}\norm{(\ee zk-\ee xk)}\\
  \label{EqNormSubgr1}&\leq c_2\norm{\ee zk-\ee xk}\\
  \label{EqNormSubgr2}&\leq c_2(\norm{\ee zk-\xb}+\norm{\ee xk-\xb})\leq c_3\norm{\ee xk-\xb}\end{align}
  with constants $c_2:=\left(L+ 1/\underline{\lambda}\right)$ and $c_3:=c_2(c_1+1)$,
  implying
  \[\norm{(\ee zk,\ee {z^*}k)-(\xb,0)}\leq (c_1+c_3)\norm{\ee xk-\xb}.\]
  Thus the bound \eqref{EqApprStepF} holds with $\eta:=c_1+c_3$ and we have $(\ee zk,\ee {z^*}k)\in \A_{\eta,\xb}(\ee xk)$ for all $k\in K$.
  Set $\kappa:=\scdreg\partial \varphi(\xb,0)$ and fix some $\epsilon>0$ with
  \begin{equation}\label{EqEpsilon}\epsilon(\eta+c_3)<\frac 12,\quad (L+1/\underline{\lambda})\epsilon^2\leq C,\quad 4(L+1/\lb)\epsilon^2c_2^2\leq \frac{(1-\alpha)(1-\beta)}{2\lb}. \end{equation}
  By Proposition \ref{PropConvNewton} and Lemma \ref{Lem_scdreg} we can find some radius $\delta_z\in (0,\delta/2)$ such that for each $(z,z^*)\in\gph\partial \varphi\cap \B_{\delta_z}(\xb,0)$ and each $L\in\Sp_{P,W}\partial\varphi(z,z^*)$ one has $L\in\Z_n^{\rm reg}$ and
  \begin{equation}\label{EqAuxC_L}\norm{C_{L^*}}=\norm{C_L}\leq \kappa+1\quad\mbox{and}\quad\norm{z-C_Lz^*-\xb}\leq \epsilon\norm{(z-\xb,z^*)}.\end{equation}
  Further we can choose $\delta_z$ so small that for every iteration index $k$ with $\ee xk\in\B_{\delta_z/c_1}(\xb)$ one has
  \begin{equation}\label{Eq_xi_k}(\kappa+1)\ee\xi k<\epsilon.\end{equation}
  Set $\delta_x:=\min\{\delta,\delta_z/\eta\}$ and consider an iteration $k\in K$ with $\ee xk\in\B_{\delta_x}(\xb)$. Then $(\ee zk,\ee{z^*}k)\in\gph\partial \varphi\cup\B_{\delta_z}(\xb,0)$ and therefore $\ee Lk:=\rge(\ee Pk,\ee Wk)\in\Z_n^{\rm reg}$. Thus the exact \SCD \ssstar Newton direction $\ee {\bar s}k:=-C_{\ee Lk}\ee{z^*}k=-\ee Pk{\ee Wk}^{-1}\ee {z^*}k$ is well-defined and satisfies
  \[\norm{\ee zk+\ee{\bar s}k-\xb}\leq \epsilon\norm{(\ee zk-\xb,\ee {z^*}k)}\]
  by \eqref{EqAuxC_L}. Further, since $\ee sk\in\rge \ee Pk$ and the symmetric matrices $\ee Pk$ and $\ee Wk$ commute, we obtain that
  \[\norm{\ee sk-\ee{\bar s}k}=\norm{\ee Pk \ee sk-\ee{\bar s}k}=\norm{{\ee Wk}^{-1}\ee Pk(\ee Wk \ee sk+\ee Pk\ee {z^*}k)}\leq \norm{C_{\ee Lk}}\ee \xi k\norm{\ee {z^*}k}\]
  and consequently
  \begin{align}\nonumber\norm{\ee zk+\ee sk-\xb}&\leq \norm{\ee zk+\ee{\bar s}k-\xb}+\norm{\ee sk-\ee{\bar s}k}\leq \epsilon\norm{(\ee zk-\xb,\ee {z^*}k)}+\norm{C_{\ee Lk}}\ee \xi k\norm{\ee {z^*}k}\\
  \label{EqAuxBndx_k+1(1)}&\leq \epsilon\norm{\ee zk-\xb}+ \big(\epsilon+(\kappa+1)\ee \xi k\big)c_2\norm{\ee zk-\ee xk}\\
  \nonumber&\leq \Big(\epsilon c_1+\big(\epsilon+(\kappa+1)\ee \xi k\big)c_3\Big)\norm{\ee xk-\xb}=\big(\epsilon\eta+(\kappa+1)\ee \xi k c_3)\norm{\ee xk-\xb}\\
  \label{EqAuxBndx_k+1(2)}&\leq\epsilon(\eta+c_3)\norm{\ee xk-\xb}<\frac 12\norm{\ee xk-\xb}
  \end{align}
  by \eqref{EqNormSubgr1}-\eqref{EqEpsilon} and \eqref{Eq_xi_k}. Using the notation introduced after Lemma \ref{LemOutIterate},
  we will now show that for every $j=0,\ldots,j_k$ we have $\ee \tau{k,j}=1$ resulting in $\ee x{k+1,j}=\ee zk+\ee sk$ and $\ee \lambda{k+1,j}=2^{-j}\ee \lambda{k+1,0}\geq 1/{\underline \lambda}$. Indeed, since $\ee zk\in \B_{\delta_z}(\xb)$ and $\ee x{k+1,0}=\ee zk+\ee sk\in\B_{\delta_x/2}(\xb)$, we certainly have $\ee x{k+1,j}\in \B_{\delta/2}(\xb)$ and therefore $\ee z{k+1,j}\in S$. Thus, if $\ee \lambda{k+1,j}\leq\alpha/L$, we have by Lemma \ref{LemDesc} that
  \[f(\ee z{k+1,j})\leq \ell_f(\ee x{k+1,j},\ee z{k+1,j})+\frac L2\norm{\ee z{k+1,j}-\ee x{k+1,j}}^2\leq \ell_f(\ee x{k+1,j},\ee z{k+1,j})+\alpha\ee \eta{k+1,j}\]
  and therefore $\ee \lambda{k+1,j}$ is not further reduced. It follows that for all $j=0,\ldots,j_k$ we have $\ee \lambda{k+1,j}\geq \min\{\ee \lambda{k+1,0},\frac \alpha{2L}\}=\min\{\ee \lambda k,\frac \alpha{2L}\}\geq \underline{\lambda}$ because of $\sigma\leq 1/2$. Hence
  \begin{align*}\lefteqn{\varphi^{\rm FB}_{\ee \lambda{k+1,j}}(\ee zk+\ee sk)}\\
  &\leq\psi^{\rm FB}_{\ee \lambda{k+1,j}}(\ee zk+\ee sk,\xb)\leq\ell_f(\ee zk+\ee sk,\xb)-f(\xb)+\frac 1{2\underline{\lambda}}\norm{\xb-(\ee zk+\ee sk)}^2+f(\xb)+g(\xb)\\
  &\leq\varphi(\xb)+\frac {L+1/\underline{\lambda}}2\norm{\ee zk+\ee sk-\xb}^2\\
  &\leq \varphi(\ee zk)-C\norm{\ee zk-\xb}^2\\
  &\qquad+\frac {L+1/\underline{\lambda}}2\Big( 2\epsilon^2\norm{\ee zk-\xb}^2+2\big(\epsilon+(\kappa+1)\ee \xi k\big)^2c_2^2\norm{\ee zk-\ee xk}^2\Big)\\
  &\leq \phiFB k+\big((L+1/\underline{\lambda})\epsilon^2-C\big)\norm{\ee zk-\xb}^2+   4(L+1/\underline{\lambda})\epsilon^2c_2^2\norm{\ee zk-\ee xk}^2-(1-\alpha)\ee\eta k\\
  &\leq \phiFB k +\frac{(1-\beta)(1-\alpha)}{2\lb}\norm{\ee zk-\ee xk}^2-(1-\alpha)\ee\eta k\leq \phiFB k -\beta(1-\alpha)\ee\eta k,\end{align*}
  where we have used \eqref{EqAuxBndx_k+1(1)} together with the inequality $(a+b)^2\leq 2a^2+2b^2$, the quadratic growth condition \eqref{EqQuadrGrowth}, \eqref{EqDecrFB2} and the bounds \eqref{EqEpsilon},\eqref{Eq_xi_k} together with $\ee \lambda k\leq \lb$ and the equality $\ee\eta k=1/(2\ee \lambda k)\norm{\ee zk-\ee xk}^2$. Thus, the condition in the {\tt if}-statement of step 6. of Algorithm \ref{AlgBasGSSN} is always violated and we never reduce $\ee\tau{k,j}$. This proves our claim that $\ee x{k+1}=\ee x{k+1,0}$. By \eqref{EqAuxBndx_k+1(2)}, $\norm{\ee x{k+1}-\xb}<\frac 12\norm{\ee xk-\xb}$ and an induction argument yields that $\ee xk$ converges to $\xb$ and that $\ee x{k+1}=\ee zk+\ee sk$ for all $k$ sufficiently large. The superlinear convergence of $\ee xk$ is a consequence of \eqref{EqAuxBndx_k+1(2)} since we can choose $\epsilon>0$ arbitrarily small.
\end{proof}
\begin{remark}\label{RemRho}
\begin{enumerate}
\item
Note that in the proof of Theorem \ref{ThSuperLinConv} we did not use the condition $\norm{\ee sk}\leq \bar\rho$, which is only required in Theorem \ref{ThConvBasGSSN} for showing global convergence. In fact, it follows from the proof of Theorem \ref{ThSuperLinConv} that
\[\norm{\ee sk}\leq \norm{C_{\ee Lk}}(1+\ee \xi k)\norm{\ee {z^*}k}\]
and $\norm{\ee {z^*}k}\leq c_2\norm{\ee zk-\ee xk}\to 0$ showing $\norm{\ee sk}\to 0$. Thus the condition $\norm{\ee sk}\leq \bar\rho$ is automatically fulfilled, if we are sufficiently close to a stationary point of $\varphi$ at which the subdifferential $\partial \varphi$ is SCD regular.
\item One of the reviewers asked whether the convergence properties of GSSN, as a Newton-type method, are invariant to the choice of $\ee \lambda k$. The calculation of the search direction $\ee sk$ is actually independent of the parameter $\ee \lambda k$. However, as one can see from the proof of Theorem \ref{ThSuperLinConv}, the parameter  $\eta$ appearing in \eqref{EqApprStepF} and Proposition \ref{PropSingleStep}, depends on the quantity $\inf\{\ee \lambda k\mv \ee xk\in\B_{\delta/2}(\xb)\}$, at least in theory.
\end{enumerate}
\end{remark}

We discuss now two possibilities for computing $\ee sk$. We also require $\norm{\ee sk}\leq \ee\rho k$, where the radius $\ee\rho k\in[\underline \rho,\bar\rho]$ with $\underline \rho>0$ is updated according to the trust region concept in order to keep the number of backtrackings in Step 6. of Algorithm \ref{AlgBasGSSN} small:
\begin{equation}\label{Eqrho}
  \begin{minipage}[t]{14cm}{\tt if $\ee \tau k<0.25$ then}\\
  \mbox{\quad} $\ee \rho{k+1}\leftarrow\max\{\underline\rho,\norm{\ee sk}/2\}$;\\
  {\tt else if $\norm{\ee sk}==\ee\rho k\ \wedge\ \ee\tau k==1$ then}\\
  \mbox{\quad} $\ee \rho{k+1}\leftarrow\min\{\bar\rho,1.5\norm{\ee sk}\}$;\\
  {\tt else}\\
  \mbox{\quad} $\ee \rho{k+1}\leftarrow \ee\rho k$;
  \end{minipage}
\end{equation}

The first method for computing $\ee sk$ is based on the exact solution of \eqref{EqLinSystNewtonStep}:
\begin{equation}\label{EqSk_Newton}
  \begin{minipage}[t]{14cm}{\tt if $\rge(\ee Pk,\ee Wk)\in \Z_n^{\rm reg}$ then}\par
   \qquad $\ee sk \leftarrow -{\ee Wk}^{-1}\ee Pk \ee {z^*}k$, {\tt if}  $\norm{\ee sk}>\ee \rho k$ {\tt then} $\ee sk\leftarrow \ee\rho k\ee sk/\norm{\ee sk}$\\
   {\tt else}\par
   \qquad $\ee sk\leftarrow -\ee\rho k\ee Pk\ee{z^*}k/\norm{\ee Pk\ee{z^*}k}$.
  \end{minipage}
\end{equation}
Since $\ee Wk$ and $\ee Pk$ commute, we automatically have $-{\ee Wk}^{-1}\ee Pk \ee {z^*}k\in\rge \ee Pk$.

\begin{corollary}\label{CorSuperlinConv1}Consider the iterates generated by Algorithm \ref{AlgBasGSSN} with $\ee sk$ computed by \eqref{EqSk_Newton}. Assume that $\varphi(\ee zk)$ is bounded below and assume that the sequence $\ee xk$ has a limit point $\xb$ which is a local minimizer for $\varphi$. Further suppose that $\partial\varphi$ is both \SCD regular and \SCD \ssstar at $(\xb,0)$. Then $\ee xk$ converges superlinearly to $\xb$.
\end{corollary}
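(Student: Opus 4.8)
The plan is to derive the corollary directly from Theorem \ref{ThSuperLinConv}. The hypotheses on $\varphi$, on the limit point $\xb$, and on $\partial\varphi$ are identical, so all that remains is to verify, for the concrete rule \eqref{EqSk_Newton}, the two conditions that the abstract theorem imposes on the search direction: that $\ee sk\in\rge\ee Pk$ for every $k$, and that along every subsequence $\ee x{k_i}\to\xb$ the relative residuals $\ee\xi{k_i}$ tend to $0$. The first condition is immediate by inspection of \eqref{EqSk_Newton}: in the {\tt else} branch $\ee sk$ is a scalar multiple of $\ee Pk\ee{z^*}k\in\rge\ee Pk$, while in the {\tt if} branch it is a scalar multiple of $-{\ee Wk}^{-1}\ee Pk\ee{z^*}k=-\ee Pk{\ee Wk}^{-1}\ee{z^*}k\in\rge\ee Pk$, where I use that $\ee Pk$ and $\ee Wk$ commute by \eqref{EqPW2} and hence so do $\ee Pk$ and ${\ee Wk}^{-1}$.

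For the residual condition, I would fix a subsequence $\ee x{k_i}\to\xb$ and first argue that $(\ee z{k_i},\ee{z^*}{k_i})\to(\xb,0)$. By Theorem \ref{ThConvBasGSSN}(iii) we have $\norm{\ee z{k_i}-\ee x{k_i}}\to0$, whence $\ee z{k_i}\to\xb$. Moreover, since $\ee x{k_i}$ eventually lies in a bounded neighbourhood of $\xb$, Lemma \ref{LemConvBasGSSN}(vi) furnishes a constant $\underline\lambda>0$ with $\ee\lambda{k_i}\ge\underline\lambda$ for $i$ large. Combining this with the identity $\ee{z^*}k=\nabla f(\ee zk)-\nabla f(\ee xk)-\frac1{\ee\lambda k}(\ee zk-\ee xk)$ coming from \eqref{Eqz*g}, \eqref{Eqz*k} and the strict (hence locally Lipschitz) continuity of $\nabla f$ yields $\norm{\ee{z^*}{k_i}}\le(L+1/\underline\lambda)\norm{\ee z{k_i}-\ee x{k_i}}\to0$, exactly as in \eqref{EqNormSubgr1}.

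Next I would invoke \SCD regularity. Since $\partial\varphi$ has the \SCD property around $(\xb,0)$ and is \SCD regular there, Lemma \ref{Lem_scdreg} guarantees that for $i$ large $\ee L{k_i}:=\rge(\ee P{k_i},\ee W{k_i})\in\Z_n^{\rm reg}$ and, because $\ee L{k_i}\in\Sp^*\partial\varphi(\ee z{k_i},\ee{z^*}{k_i})$, that $\norm{C_{\ee L{k_i}}}\le\kappa+1$ with $\kappa:=\scdreg\partial\varphi(\xb,0)$. Thus the {\tt if} branch of \eqref{EqSk_Newton} is selected, and the untruncated Newton direction $-{\ee W{k_i}}^{-1}\ee P{k_i}\ee{z^*}{k_i}=-C_{\ee L{k_i}}\ee{z^*}{k_i}$ has norm at most $(\kappa+1)\norm{\ee{z^*}{k_i}}\to0$. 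Since the update \eqref{Eqrho} keeps $\ee\rho{k_i}\ge\underline\rho>0$, for $i$ large this norm falls strictly below $\ee\rho{k_i}$, so the trust-region rescaling is inactive and $\ee s{k_i}$ solves \eqref{EqLinSystNewtonStep} exactly, i.e.\ $\ee W{k_i}\ee s{k_i}+\ee P{k_i}\ee{z^*}{k_i}=0$ and hence $\ee\xi{k_i}=0$. This establishes $\ee\xi{k_i}\to0$, and Theorem \ref{ThSuperLinConv} then delivers the superlinear convergence of $\ee xk$ to $\xb$.

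I expect the decisive step to be the last one, namely ensuring that near $\xb$ the trust-region radius never clips the Newton step; otherwise $\ee\xi{k_i}$ would be nonzero and the theorem would not apply. This rests on the competition between the shrinking of $\norm{\ee{z^*}{k_i}}$, driven by $\norm{\ee z{k_i}-\ee x{k_i}}\to0$, and the uniform lower bound $\underline\rho$ on $\ee\rho{k_i}$ built into \eqref{Eqrho}. Once this point is secured, the remaining steps are routine and largely transplant estimates already derived in the proof of Theorem \ref{ThSuperLinConv}.
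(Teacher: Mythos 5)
Your proposal is correct and follows essentially the same route as the paper: both reduce the corollary to Theorem \ref{ThSuperLinConv} by showing that near $\xb$ the subspace $\rge(\ee Pk,\ee Wk)$ is \SCD regular with $\norm{C_{\ee Lk}}\leq\kappa+1$, so the exact Newton direction has norm $O(\norm{\ee{z^*}k})\to 0$ and eventually drops below $\underline\rho\leq\ee\rho k$, whence the truncation in \eqref{EqSk_Newton} is inactive and $\ee\xi k=0$. The paper simply cites the estimates already derived in the proof of Theorem \ref{ThSuperLinConv} where you re-derive them, but the argument is the same.
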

\begin{proof}
  By the proof of Theorem \ref{ThSuperLinConv}, for all $\ee xk$ sufficiently close to $\xb$  we have $\rge(\ee Pk,\ee Wk)\in \Z_n^{\rm reg}$ and the Newton direction $\ee {\bar s}k$
  satisfies
  \[\norm{\ee{\bar s}k}\leq(\kappa+1)\norm{\ee {z^*}k}\leq (\kappa+1)c_3\norm{\ee xk-\xb}.\]
  Thus, for every $\ee xk$ sufficiently close to $\xb$ we have $\norm{\ee {\bar s}k}\leq\underline\rho\leq\ee \rho k$ and the Newton direction $\ee sk=-{\ee Wk}^{-1}\ee Pk \ee {z^*}k$ is returned by \eqref{EqSk_Newton} resulting in $\ee \xi k=0$. Then superlinear convergence follows from Theorem \ref{ThSuperLinConv}.
\end{proof}

For large $n$ the computation of the exact solution of the Newton system \eqref{EqLinSystNewtonStep} might be very time consuming and it seems to be advantageous  to compute only an approximate solution by means of an iterative method. This is our second approach. Since we are dealing with quadratic optimization problems, the use of conjugate gradients (CG) seems to be appropriate. Motivated by Lemma \ref{LemStatPoint}, we try to find an approximate solution of the subproblem
\begin{equation}\label{EqTRSubProbl_s}\min\frac 12 s^T\ee Wk s+ \skalp{\ee {z^*}k,s}\quad \mbox{subject to}\quad s\in\rge \ee Pk,\ \norm{s}\leq \ee\rho k\end{equation}
Consider an $n\times m$ matrix $\ee Zk$, where $m=\dim\rge \ee Pk$, whose columns form an  basis for $\rge \ee Pk$ and therefore $\ee Pk=\ee Zk\big({\ee Zk}^T\ee Zk\big)^{-1}{\ee Zk}^T$. Then the problem above can be equivalently rewritten as
\begin{equation}\label{EqTRSubProbl_p}
  \min_{u\in\R^m} \frac 12 u^T {\ee Zk}^T\ee Wk \ee Zk u+\skalp{{\ee Zk}^T\ee {z^*}k,u}\quad\mbox{subject to}\quad\norm{\ee Zk u}\leq \ee \rho k.
\end{equation}
Any solution $s$ to \eqref{EqTRSubProbl_s} induces a solution $u=\big({\ee Zk}^T\ee Zk\big)^{-1}{\ee Zk}^Ts$ to \eqref{EqTRSubProbl_p} and, vice versa, $s={\ee Zk}u$ is a solution to \eqref{EqTRSubProbl_s} for every solution $u$ to \eqref{EqTRSubProbl_p}. Consider the following algorithm:
\begin{algorithm}\label{AlgCG}{Preconditioned  Trust Region CG}\\
{\bf Input: }$n\times n$ matrix $\ee Wk$, $n\times m$ matrix $\ee Zk$, subgradient $\ee {z^*} k$, radius $\ee\rho k$, stopping tolerance $0<\ee{\bar \xi}k$.\\
1. \begin{minipage}[t]{\myAlgBox}Choose a symmetric positive definite $m\times m$ matrix $\ee Ck$ (preconditioner), set $\ee r0\leftarrow {\ee Zk}^T\ee{z^*}k$, $\ee u0\leftarrow 0$, $\ee p{0}\leftarrow -{\ee Ck}^{-1}\ee r0$, $j\leftarrow 0$;\end{minipage}\\
2. {\tt while} $\norm{\ee Z k\ee uj}\leq \ee \rho k\ \wedge \norm{\ee Wk\ee Zk\ee uj +\ee Pk\ee{z^*}k}\geq\ee{\bar\xi}k\norm{\ee{z^*}k}$ {\tt do}\\
3. \mbox{\qquad}Compute $\ee yj\leftarrow {\ee Z k}^T\ee W k\ee Z k\ee pj$;\\
4. \mbox{\qquad}{\tt if} $\skalp{\ee yj,\ee pj}\leq 0$ {\tt then}\\
 \mbox{\qquad\quad\quad}{\tt return} $\ee sk$ as a global solution of the problem
\begin{equation}\label{EqTwoDimTRProbl}\min \frac 12 s^T\ee W ks + \skalp{z^*,s}\mbox{ subject to }\norm{s}\leq \ee \rho k,\ s\in{\rm span\,}\{\ee Z k\ee p0, \ee Z k\ee pj\};\end{equation}
5. \mbox{\qquad}Compute
\begin{align*}&\ee\alpha j\leftarrow-\frac{\skalp{\ee rj,\ee pj}}{\skalp{\ee yj,\ee pj}},\quad \ee u{j+1}\leftarrow \ee uj+\ee\alpha j\ee pj,\quad \ee r{j+1}\leftarrow \ee rj+\ee\alpha j\ee yj,\\ &\ee p{j+1}\leftarrow -{\ee Ck}^{-1}\ee r{j+1}+\frac{\skalp{\ee r{j+1}, {\ee Ck}^{-1}\ee r{j+1}}}{\skalp{\ee rj, {\ee Ck}^{-1}\ee rj}}\ee pj;\end{align*}
6. \mbox{\qquad}$j\leftarrow j+1$;\\
7. {\tt end while}\\
8. {\tt if} $\norm{\ee Z k\ee uj}>\ee \rho k$ {\tt then}\\
\mbox{\quad\quad}{\tt return} $\ee sk\leftarrow \ee \rho k \ee Z k\ee uj/\norm{\ee Z k\ee uj}$;\\
\mbox{\quad}{\tt else}\\
\mbox{\quad\quad}{\tt return} $\ee sk\leftarrow \ee Z k\ee uj$;
\end{algorithm}
In Algorithm \ref{AlgCG} we apply the well-known preconditioned CG method, until we either encounter a direction of negative curvature in step 4., the search direction is longer than the given radius $\ee \rho k$ or we get a sufficient decrease in the gradient.

Note that \eqref{EqTwoDimTRProbl} is in fact a two-dimensional problem, which is computationally inexpensive to solve. After some
algebraic manipulation it can be reduced to finding the roots of a fourth degree polynomial. However, in our implementation we used an iterative procedure (see, e.g, \cite[Chapter 4.3]{NoWr06}) for solving this subproblem. The condition  $\skalp{\ee yj,\ee pj}=\skalp{{\ee Z k}^T\ee W k\ee Z k\ee pj,\ee pj}\leq 0$ implies that the matrix ${\ee Z k}^T\ee W k\ee Z k$ is not positive semidefinite. Further, if $\skalp{\ee yj,\ee pj}<0$ then the solution $\ee sk$ of \eqref{EqTwoDimTRProbl} satisfies $\norm{\ee sk}=\ee \rho k$.

As an alternative to Algorithm \ref{AlgCG} one could also use   the conjugate gradient (CG) algorithm due to Steihaug \cite{Stei83} for solving \eqref{EqTRSubProbl_p}.

In case when $\ee sk$ is computed by Algorithm \ref{AlgCG}, we need some stronger assumption on the minimizer $\xb$ to guarantee that the matrix $\ee Wk$ is positive definite for $\ee xk$ close to $\xb$ and therefore $\ee sk$ is not computed by \eqref{EqTwoDimTRProbl}.
%%%%%%%%%%%%%%%%%%%%%%%%%%%%%%%%%%%%%%
\if{
The following definition is due to Rockafellar \cite{Ro19}.
\begin{definition}\label{DefVarConv}
 $\varphi$ is called {\em (strongly) variationally convex} at $\xb$ for $\xba\in\partial \varphi(\xb)$ if for some convex neighborhood $U\times V$ of $(\xb,\xba)$ there exist an lsc (strongly) convex function $q\leq\varphi$ on $U$  and a scalar $\epsilon>0$ such that
\[ (U_\epsilon\times V)\cap\gph\partial\varphi = (U\times V)\cap\gph\partial q\quad\mbox{and $\varphi(x)=q(x)$ at the common elements $(x, x^*)$,}\]
where $U_\epsilon:=\{x\in U\mv \varphi(x)<\varphi(\xb)+\epsilon\}$.
\end{definition}
By \cite[Lemma 5.7]{KhMoPh23}, a function $\varphi:\R^n\to\oR$ which is prox-regular and subdifferential continuous at a local minimizer $\xb$ for $0$, is strongly variational convex at $\xb$ for $0$ if and only if $\xb$ is a tilt-stable local minimizer.
}
\fi
%%%%%%%%%%%%%%%%%%%%%
\begin{corollary}
\label{CorSuperlinConv2}Consider the iterates generated by Algorithm \ref{AlgBasGSSN} with $\ee sk$ computed by Algorithm \ref{AlgCG} with tolerances $\ee{\bar \xi}k:=\chi(\norm{\ee {z^*}k})$, where $\chi:(0,\infty)\to(0,1)$ is a monotonically increasing function with $\lim_{t\downarrow 0}\chi(t)=0$. Assume that $\varphi(\ee zk)$ is bounded below and assume that the sequence $\ee xk$ has a limit point $\xb$ which is a tilt-stable local minimizer for $\varphi$. Further suppose that $\varphi$ is prox-regular and subdifferentially  continuous at $\xb$ for $0$ and that  $\partial\varphi$ is \SCD \ssstar at $(\xb,0)$. Then $\ee xk$ converges superlinearly to $\xb$.
\end{corollary}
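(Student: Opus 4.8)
The plan is to derive the corollary from Theorem \ref{ThSuperLinConv}, so everything reduces to verifying its hypotheses for the direction produced by Algorithm \ref{AlgCG}. The requirements that $\varphi(\ee zk)$ be bounded below and that $\xb$ be a limiting local minimizer are inherited from the assumptions, since a tilt-stable local minimizer is in particular a local minimizer. Because $\varphi$ is prox-regular and subdifferentially continuous at $\xb$ for $0$ and $\xb$ is tilt-stable, Theorem \ref{ThTiltStab} yields that $\partial\varphi$ is \SCD regular at $(\xb,0)$, while the \SCD \ssstar property at $(\xb,0)$ is assumed outright. Finally, $\ee sk\in\rge\ee Pk$ because the columns of $\ee Zk$ span $\rge\ee Pk$. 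Hence the only genuine task is to show that along every subsequence $\ee x{k_i}\to\xb$ the relative residuals $\ee\xi{k_i}$ tend to $0$.

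First I would localize tilt-stability. By Proposition \ref{PropTiltStab}(iii) there is a neighborhood of $(\xb,0)$ on which every $(P,W)\in\M_{P,W}\partial\varphi(x,x^*)$ has $W$ positive definite. Along a subsequence $\ee x{k_i}\to\xb$ one has $\ee z{k_i}\to\xb$ and $\ee{z^*}{k_i}\to 0$, because $\norm{\ee z{k_i}-\ee x{k_i}}\to 0$ by Theorem \ref{ThConvBasGSSN}(iii) and $\norm{\ee{z^*}{k_i}}\le c_2\norm{\ee z{k_i}-\ee x{k_i}}$ by \eqref{EqNormSubgr1}. Thus for large $k_i$ the matrix $\ee W{k_i}$ is positive definite, hence so is the reduced Hessian ${\ee Z{k_i}}^{T}\ee W{k_i}\ee Z{k_i}$; consequently $\skalp{\ee yj,\ee pj}=\skalp{{\ee Z{k_i}}^{T}\ee W{k_i}\ee Z{k_i}\ee pj,\ee pj}>0$ for every nonzero $\ee pj$, so no direction of negative curvature is ever encountered and Algorithm \ref{AlgCG} never returns through step 4. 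For such $k_i$ it behaves as ordinary preconditioned CG on a positive definite system, and the loop in step 2 can terminate only through the trust-region test or through the residual test.

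The heart of the argument is to exclude the trust-region exit for large $k_i$. Positive definiteness of $\ee W{k_i}$ makes the exact reduced Newton direction $\ee{\bar s}{k_i}=-C_{\ee L{k_i}}\ee{z^*}{k_i}\in\rge\ee P{k_i}$ well defined; exactly as in the proof of Theorem \ref{ThSuperLinConv} it satisfies $\ee W{k_i}\ee{\bar s}{k_i}=-\ee P{k_i}\ee{z^*}{k_i}$ and $\norm{\ee{\bar s}{k_i}}\le(\kappa+1)\norm{\ee{z^*}{k_i}}$ with $\kappa=\scdreg\partial\varphi(\xb,0)$, so that $\norm{\ee{\bar s}{k_i}}\to 0$. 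The key point is that \emph{every} intermediate CG iterate inherits this smallness. Since the search directions are conjugate with respect to the reduced Hessian and $\ee u0=0$, the energy norm of the iterates is nondecreasing and bounded by that of the exact solution, which in the original variables reads $(\ee Z{k_i}\ee uj)^{T}\ee W{k_i}(\ee Z{k_i}\ee uj)\le(\ee{\bar s}{k_i})^{T}\ee W{k_i}\ee{\bar s}{k_i}$. Combining this with the lower bound $(\ee Z{k_i}\ee uj)^{T}\ee W{k_i}(\ee Z{k_i}\ee uj)\ge\norm{\ee Z{k_i}\ee uj}^{2}/\norm{C_{\ee L{k_i}}}$ coming from \eqref{EqC_LStrongConv} (recall $\ee Z{k_i}\ee uj\in\rge\ee P{k_i}$), with $\ee W{k_i}\ee{\bar s}{k_i}=-\ee P{k_i}\ee{z^*}{k_i}$, and with $\norm{C_{\ee L{k_i}}}\le\kappa+1$, one obtains the uniform estimate $\norm{\ee Z{k_i}\ee uj}\le(\kappa+1)\norm{\ee{z^*}{k_i}}$ for all $j$. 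Since $\norm{\ee{z^*}{k_i}}\to 0$, for large $k_i$ the entire CG trajectory stays inside the ball of radius $\underline\rho\le\ee\rho{k_i}$, so the trust-region test never triggers and the loop exits through the residual test, giving $\ee\xi{k_i}<\ee{\bar\xi}{k_i}=\chi(\norm{\ee{z^*}{k_i}})\to 0$ because $\chi(t)\to 0$ as $t\downarrow 0$.

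With $\ee\xi{k_i}\to 0$ established, all hypotheses of Theorem \ref{ThSuperLinConv} are met and the superlinear convergence of $\ee xk$ to $\xb$ follows. I expect the main obstacle to be exactly the trust-region step: one must control the norm of the \emph{entire} CG path, not only of its limit, since a single spurious hit of the boundary would return an $\ee sk$ of norm $\ee\rho{k_i}\ge\underline\rho$, keeping $\ee\xi{k_i}$ bounded away from $0$ and destroying superlinear convergence. The resolution above rests on the monotonicity of the CG iterates in the energy norm together with the uniform bound $\norm{C_{\ee L{k_i}}}\le\kappa+1$ furnished by \SCD regularity, which together tie the size of every iterate to the vanishing quantity $\norm{\ee{z^*}{k_i}}$.
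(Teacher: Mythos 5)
Your proof is correct and follows the same overall skeleton as the paper's: reduce to Theorem \ref{ThSuperLinConv}, use Theorem \ref{ThTiltStab} and Proposition \ref{PropTiltStab} to obtain \SCD regularity and positive definiteness of $\ee Wk$ near $(\xb,0)$ (so the negative-curvature exit in step 4 of Algorithm \ref{AlgCG} never fires), show the whole CG trajectory stays inside the trust region for large $k$, and conclude that the loop must exit through the residual test, giving $\ee\xi{k_i}\le\ee{\bar\xi}{k_i}=\chi(\norm{\ee{z^*}{k_i}})\to0$. The one place where you diverge is the mechanism for bounding the CG path: the paper observes that the quadratic objective is strictly decreasing along the CG iterates starting from $\ee u0=0$, so every iterate $s=\ee Zk\ee uj$ satisfies $0>\frac12 s^T\ee Wk s+\skalp{\ee{z^*}k,s}\ge\frac12\beta\norm{s}^2-\norm{\ee{z^*}k}\norm{s}$ with $\beta:=2/\scdreg\partial\varphi(\xb,0)$ furnished by Proposition \ref{PropTiltStab} and Lemma \ref{Lem_scdreg}, whence $\norm{s}<2\norm{\ee{z^*}k}/\beta$; you instead use the characterization of the CG iterate as the $W$-orthogonal projection of the exact Newton direction onto the Krylov subspace, so its energy norm is dominated by that of $\ee{\bar s}k$, and then convert to the Euclidean norm via \eqref{EqC_LStrongConv} and $\norm{C_{\ee Lk}}\le\kappa+1$. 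Both devices are valid and yield a bound of the form $\norm{\ee Zk\ee uj}\le {\rm const}\cdot\norm{\ee{z^*}k}\to0$ uniformly in $j$; the paper's is marginally more elementary (it needs only monotone decrease of the objective, which is immediate from $\ee\alpha j>0$), while yours makes the geometric reason for the uniform bound more transparent. You correctly identified that controlling the \emph{entire} trajectory, not just the limit, is the crux.
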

\begin{proof}
   Since $\partial\varphi$ is SCD regular at $(\xb,0)$ by Theorem \ref{ThTiltStab}, the real $\beta:=2/\scdreg\partial\varphi(\xb,0)$ is positive, By Proposition \ref{PropTiltStab} together with Lemma \ref{Lem_scdreg} we can find a neighborhood $U\times V$ of $(\xb,0)$ such that for every $(x,x^*)\in\partial\varphi\cap(U\times V)$ and  every $(P,W)\in \M_{P,W}\partial\varphi(x,x^*)$ the matrix $W$ is positive definite and there holds
   \[s^TPWPs\geq\beta\norm{Ps}^2\ \forall s\in\R^n.\]
    Thus, whenever $(\ee zk,\ee {z^*}k)\in \inn(U\times V)$ then  the matrix $\ee Wk$ is positive definite and the obtained search  direction $\ee sk$ fulfills ${\ee sk}^T\ee Pk\ee Wk\ee Pk\ee sk\geq \beta\norm{\ee Pk\ee sk}^2$. Hence the condition in the {\tt if} statement in step 4. of Algorithm \ref{AlgCG} is never fulfilled. Since $\ee Pk\ee sk=\ee sk$ and the objective function values at the iterates produced by the CG-method are strictly decreasing, we obtain
  \begin{align*}0&>\frac 12 {\ee sk}^T\ee Wk\ee sk+\skalp{\ee {z^*}k,\ee sk}\geq \frac 12 \beta\norm{\ee sk}^2-\norm{\ee {z^*}k}\norm{\ee sk}.
  \end{align*}
  Thus $\norm{\ee sk}<2\norm{\ee {z^*}k}/\beta$ showing that $\norm{\ee sk}<\underline\rho<\ee\rho k$ whenever $\norm{\ee {z^*}k}<\underline\rho\beta/2$ and in this case the relative residual fulfills $\ee\xi k\leq \ee {\bar \xi}k$. Since for every subsequence $\ee x {k_i}\to\xb$ we have $\ee {z^*}{k_i}\to 0$ by \eqref{EqNormSubgr1}, we conclude $\ee\xi{k_i}\to0$ and the assertion follows from Theorem \ref{ThSuperLinConv}.
\end{proof}
Note that the assumption of  a tilt-stable local minimizer (together with the other assumptions) is only sufficient for superlinear convergence but not necessary. Without this assumption
it is possible that some of the matrices $\ee Wk$ are not positive definite, but as soon as we encounter a positive definite matrix $\ee Wk$ at some iterate $\ee zk$ close to $\xb$, this results in a big improvement of $\ee zk$. Let us illustrate this behavior in the following example.
\begin{example}
  Consider the function $\varphi(x_1,x_2)=\frac 12 x_1^2 -\frac 12 x_2^2 +\delta_C(x_1,x_2)$, where
  \[C:=\{(x_1,x_2)\mv  -x_1\leq  2x_2\leq x_1\}.\]
   Then $\xb=(0,0)$ is a strict local minimizer which is  not tilt-stable, cf.\cite[Example 7.10]{GfrOut22a}. Now consider an iterate $\ee zk$ satisfying $0<2\ee{z_2}k<\ee {z_1}k$. Then easy calculations show that $\ee {z^*}k=(\ee {z_1}k,-\ee{z_2}k)$, $\ee Pk=I$ and that $\ee Wk=\left(\begin{smallmatrix}1&0\\0&-1\end{smallmatrix}\right)$ is not positive definite. Note that the direction computed by \eqref{EqSk_Newton} points to the exact solution. If we use Algorithm \ref{AlgCG} it might be that the search direction $\ee sk$ is computed by \eqref{EqTwoDimTRProbl} yielding a search direction $\ee sk$ with $\norm{\ee sk}=\ee\rho k$. However it can be easily verified that in this case the next iterate $\ee z{k+1}$  either belongs to the set $T:=\{(x_1,x_2)\mv 0\leq 2x_2=x_1\}\subset\bd C$ or is at least closer to $T$ as $\ee zk$. Due to the use of the proximal mapping, one can show that one of the subsequent iterates $\ee zj$, $j\geq k+1$ must actually belong to $T$. If $\ee zj\not=\xb$, then $\ee Pj =\frac 15\left(\begin{smallmatrix}4&2\\2&1\end{smallmatrix}\right)$ and $\ee Wj=\ee Pj\left(\begin{smallmatrix}1&0\\0&-1\end{smallmatrix}\right)\ee Pj+(I-\ee Pj)$ is positive definite and the  search direction $\ee sj$ points towards the solution.
\end{example}

\section{Numerical experiments}

In all our numerical experiments the smooth part $f$ of the objective is quadratic and we used the parameters $\alpha=0.8$, $\beta=0.2$, $\sigma=0.1$ and  $\bar\rho=10^5$ in Algorithm \ref{AlgBasGSSN}. As a stopping criterion we used the condition
\begin{equation}\label{EqStopping}
  \ee rk \leq10^{-13}\max\left\{10^3,\ \ee r0\right\}
\end{equation}
according to \eqref{EqTermRes}. In all our test instances the objective $\varphi$ is level bounded and therefore we need not take care of \eqref{EqStoppCond}.
For computing the search direction $\ee sk$ we used Algorithm \ref{AlgCG} with stopping parameter $\ee{\bar\xi} k=\chi(\norm{\ee {z^*}k})$,  $\chi(t):=0.1/(1-\ln \frac t{t+1})$ and a preconditioning matrix $\ee C k$ depending on the problem. The trust region radius $\ee\rho k$ is updated by \eqref{Eqrho} with $\underline\rho=10^{-3}$.

We compared our method with ZeroFPR \cite{ThStPa18} and stopping tolerance
\[{\tt opt.tol}=10^{-13}\max\{10^3, L_f\dist{\ee x0, T_{\frac 1{L_f}}(\ee x0)}\},\]
where $L_f$ denotes the Lipschitz constant of $\nabla f$, i.e., $L_f$ is the largest eigenvalue of $\nabla^2f$. This tolerance produces results with an accuracy comparable to \eqref{EqStopping}.

\subsection{Signorini problem with Tresca friction}
We consider an elastic body represented by the domain
\[\Omega=\{(x_1,x_2,x_3)\mv (x_1,x_2)\in(0,2)\times(0,1), d(x_1,x_2)<x_3<1\}\subset\R^3,\]
where $d:(0,2)\times(0,1)\to(0,1)$ is a function describing the bottom surface of the body.
The body is made of elastic, homogeneous, and isotropic material.
 The boundary consists of three parts $\Gamma_u=\{0\}\times[0,1]^2$,
$\Gamma_c:=\{(x_1,x_2,d(x_1,x_2))\mv (x_1,x_2)\in (0,2)\times(0,1)\}$ and $\Gamma_p=\partial\Omega\setminus (\Gamma_c\cup\Gamma_u)$.
Zero displacements are prescribed on $\Gamma_u$, surface tractions act on $\Gamma_p$, and the body is subject to volume forces. We seek a displacement field
and a corresponding stress field satisfying the Lam\'e system of PDEs in $\Omega$, the homogeneous Dirichlet
boundary conditions on $\Gamma_u$, and the Neumann boundary conditions on $\Gamma_p$. The body is unilaterally
supported along $\Gamma_c$ by some flat rigid foundation given by the half space $\R^2\times\R_-$ and the initial gap
between the body and the rigid foundation is given  by $d(x_1,x_2)$, $(x_1,x_2)\in [0,2]\times[0,1]$. In the contact zone $\Gamma_c$, we consider
a Tresca friction condition.
This problem can be described by partial differential equations and boundary conditions for the
displacements, which we are looking for. We refer the reader to, e.g., \cite{EJ}, where also a weak formulation
can be found. We consider here only the discrete algebraic problem, which arises after some
suitable finite element approximation.
Let $n$ denote the number of degrees of freedom of the nodal displacement vector and let $p$ denote
the number of nodes $x_i$ lying in the contact zone $\Gamma_c$. After some suitable reordering of the variables, such that
the first $3p$ positions are occupied by the displacements of the nodes lying in the contact part of the
boundary, we arrive at the following nodal block structure for an arbitrary vector $y\in\R^n$:
\[y = (y^1,\ldots,y^p,y^R) \mbox{ with } y^i \in\R^3,\ i = 1,\ldots,p,\ y^R\in\R^{n-3p}.\]
In what follows, $A\in\R^{n\times n}$ and  $l\in\R^n$ are the stiffness matrix and the load vector, respectively. The matrix $A$ is sparse, symmetric and positive definite and has no more than 81 nonzero elements in each row. Given a vector $z = (z_1, z_2, z_3)^T\in\R^3$, we denote by $z_{12}:= (z_1,z_2)^T\in\R^2$ the vector formed by the first
two components. Using this notation, the displacement vector $u\in\R^n$ is a solution of the problem
\begin{equation}\label{EqTresca}\min_{v\in\R^n}\frac 12 v^TAv-\skalp{l,v}+\sum_{i=1}^p\big(\F_i\norm{v^i_{12}}+\delta_{\R_+}(v_3^i+d(x^i_{12}))\big),\end{equation}
where $\delta_{R_+}$ denotes the indicator function of $\R_+$ and $\F_i>0$ is the given friction coefficient.

This problem is of the form \eqref{EqOptProbl}  with the convex quadratic function $f(v)=\frac 12 v^TAv-\skalp{l,v}$ and the nonsmooth convex function $g(v)=\sum_{i=1}^p\big(\F_i\norm{v^i_{12}}+\delta_{\R_+}(v_3+d(x_i))\big)$. Clearly, the proximal mapping ${\rm prox}_{\lambda g}$ is single-valued and given by $\bar v={\rm prox}_{\lambda g}(v)$ with
\[\bar v_{12}^i=\begin{cases}0&\mbox{if $\norm{v^i_{12}}\leq  \lambda\F_i$,}\\
\left(1-\frac{\lambda\F_i}{\norm{v^i_{12}}}\right)v^i_{12}&\mbox{else,}\end{cases}\quad \bar v^i_3=\max\{v_3^i, -d(x^i_{1,2})\},\ i=1,\ldots,p,\  \bar v^R=v^R.\]
The corresponding subgradient $\bar v^*\in\partial g(\bar v)$ can be easily computed by $\bar v^*=(v-\bar v)/\lambda$.

Given $(v,v^*)\in\gph \partial g$, we can compute an element
\[(P,W)=\left(\begin{pmatrix}P^1&&&0\\&\ddots&&\\&&P^p&\\0&&&P^R\end{pmatrix},
\begin{pmatrix}W^1&&&0\\&\ddots&&\\&&W^p&\\0&&&W^R\end{pmatrix}\right)\in \M_{P,W}\partial g(v,v^*)\] as block diagonal matrices formed by the $3\times 3$ blocks
\[P^i=\begin{pmatrix}P^i_{12}&0\\0&P^i_3\end{pmatrix},\  W^i=\begin{pmatrix}W^i_{12}&0\\0&W^i_3\end{pmatrix},\ i=1,\ldots,p\]
and the $(n-3p)\times (n-3p)$ matrices $P^R=I$, $W^R=0$, where
\[(P^i_{12},W^i_{12})=\begin{cases}\left(I,\frac{\F_i}{\norm{v^i_{12}}}\left(I-\frac{v^i_{12}{v^i_{12}}^T}{\norm{v^i_{12}}^2}\right)\right)&\mbox{if $v^i_{12}\not=0$,}\\
(0,I)&\mbox{if $v^i_{12}=0$,}\end{cases}\quad\mbox{and}\quad
(P^i_3,W^i_3)=\begin{cases}\left(1,0\right)&\mbox{if $v^i_3+d(x^i_{12})>0$,}\\
(0,1)&\mbox{if $v^i_3+d(x^i_{12})=0$,}\end{cases}\]
cf. Example \ref{ExEuclNorm}. We see that $P$ is a diagonal matrix with diagonal elements belonging to $\{0,1\}$ and therefore the unit vectors given by the nonzero columns of $P$ form an orthogonal basis for $\rge P$. Writing these unit vectors into an $n\times m$ matrix $Z$ and using Lemma \ref{LemSumRul}, problem \eqref{EqTRSubProbl_p} is of the form
\[\min_{u\in\R^m}\frac 12 u^TZ^T(A+W)Zu+\skalp{u, Z^T(Av-l+v^*)}\quad\mbox{subject to}\quad\norm{u}\leq\rho.\]
The SCD \ssstar Newton direction is then given by $s=Zu$, where $u$ denotes an (approximate) solution of this quadratic auxiliary problem.
Of course, the $m\times m$ matrix $Z^T(A+W)Z$ is the submatrix of $A+W$ formed by the rows and columns corresponding to the unit vectors in $Z$.

In Algorithm \ref{AlgCG}, we used incomplete Cholesky factorization for preconditioning. In Table \ref{TabResTresca1} we report the numerical results for various discretization levels.
\begin{table}
\[\begin{tabular}{|c|c|c|c|c|c|c|c||c|c||c|c|}
\cline{4-12}
\multicolumn{3}{c}{ }&\multicolumn{7}{|c||}{$\min f(v)+g(v)$}&\multicolumn{2}{|c|}{$\min f(v)$}\\
\cline{4-12}
\multicolumn{3}{c}{ }&\multicolumn{5}{|c||}{GSSN}&\multicolumn{2}{|c||}{ZeroFPR}&\multicolumn{2}{|c|}{Precond. CG}\\
\hline
  lev&n&p&iter&$f$-eval&prox&cg&time&iter&time&cg&time\\
  \hline
  3&1\,764&84&12&27&15&63&0.090&176&{\bf0.080}&53&0.020\\
  4&3\,888&144&13&31&18&88&0.235&226&{\bf0.196}&69&0.075\\
  5&11\,661&299&19&49&30&145&{\bf0.798}&351&1.001&101&0.336\\
  6&27\,744&544&17&44&27&177&{\bf2.255}&476&3.391&133&0.975\\
  7&79\,488&1\,104&16&36&20&247&{\bf7.770}&783&13.14&189&3.821\\
  8&209\,088&2\,112&20&48&28&374&{\bf31.07}&1095&54.75&263&14.98\\
  9&603\,057&4\,277&25&65&40&543&{\bf120.5}&1563&234.2&375&56.11\\
  10&1\,622\,400&8\,320&37&103&66&808&{\bf494.0}&2160&941.5&519&205.0\\
  \hline
\end{tabular}\]
\caption{\label{TabResTresca1}Performance of the semismooth$^{*}$ Newton method for the Signorini problem with Tresca friction}
\end{table}
Besides the total number of unknowns $n$ and the number $p$ of nodes in the contact zone  we present the count of Newton iterations needed to reach the termination criteria \eqref{EqStopping}. Further we state the number $f-eval$ of evaluations of the quadratic function $f$, the number $prox$ of evaluations of the proximal mapping of $g$ and the overall number $cg$ of CG iterations. Finally, the needed CPU time is specified. For the purpose  of comparison we present also the number of iterates and the CPU time needed by ZeroFPR and for minimizing the quadratic function $f$ with the CG-method using again incomplete Cholesky factorization for preconditioning. We can see that the CPU time needed by GSSN for solving the nonsmooth problem is approximately 2-2.5 times the one for minimizing the quadratic function. Further, GSSN is approximately twice as fast as ZeroFPR for finer discretizations levels.

As suggested by one referee, we compare our method also with FISTA \cite{BeTe09}. We computed first with GSSN a quite accurate solution $\xb$ of the problem and then ran GSSN and FISTA to obtain the relative errors $\norm{\ee xk-\xb}_\infty/\norm{\xb}_\infty$. We also computed the relative residuals $\ee rk/\ee r0$ for both GSSN and FISTA, where in FISTA the residual $\ee rk$ was computed with $\ee\lambda k\equiv 1/L_f$. The results are depicted in Figure \ref{FigGSSN_FISTA} for the two discretization levels $lev=7$ and $lev=10$.
\begin{figure}\centering
\includegraphics[width=7cm]{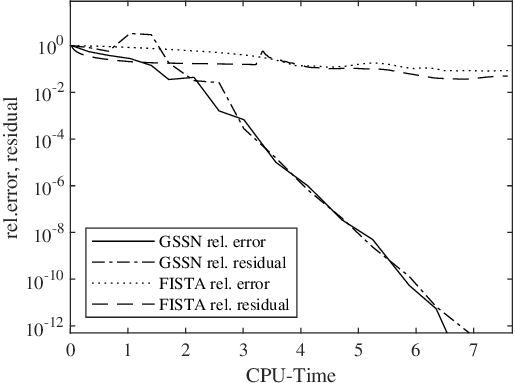}\quad
\includegraphics[width=7cm]{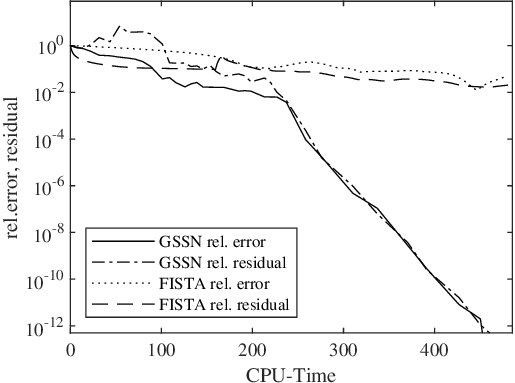}
\caption{\label{FigGSSN_FISTA}relative error $\frac{\norm{\ee xk-\xb}_\infty}{\norm{\xb}_\infty}$ and relative residuals $\frac{\ee rk}{\ee r0}$ for discretization levels $lev=7$ (left) and $lev=10$ (right)}
\end{figure}
We see that at the beginning the residuals of FISTA are up to a factor of 100 smaller than those of GSSN. However, this does not mean that the iterates approach the solution and, in fact, the iterates produced by GSSN are always closer to the solution than the ones computed by FISTA. For the two test instances, FISTA is not able to compute approximate solutions with more than one significant digit within reasonable time.

Let us mention that there exist a lot of approaches to solve contact problems by means of (classical) semismooth Newton methods, see, e.g., \cite{KuMoMaHa20}. These approaches have in common that they are applied either to a primal-dual or a dual formulation of the problem, whereas we are working exclusively with the primal problem. Compared with some recent numerical experiments \cite{KuMoMaHa20} for the dual problem we see that our numbers of CG-iterations are slightly higher than the ones which can be found in \cite{KuMoMaHa20}. This is due to the fact that we have produced solutions with a much higher accuracy. Further note that for our method, since $W$ is a block diagonal matrix and we are using incomplete Cholesky factorization for preconditioning,  the effort for performing one CG-iteration is approximately two times the one of forming the matrix vector product $Ax$ whereas  performing a CG-iteration for the dual problem requires a matrix vector product $(B^TA^{-1}B)x$ with some sparse $n\times 3p$ matrix $B$.

\subsection{(Nonconvex) sparse approximation\label{SubSecRegr}}

Now let us consider the problem of finding a sparse solution $x\in\R^n$ to a least-squares problem $Ax=b$, where $A$ is an $m\times n$ matrix and $b\in\R^m$. Sparsity can be induced by
considering the $\ell_q(q\in[0,1])$-regularized  problem
\begin{equation}\label{EqNonconvexLS}\min_{x\in\R^n}\frac 12\norm{Ax-b}^2+\mu \norm{x}_q^q,\end{equation}
where $\mu>0$ is the regularization parameter and
\[\norm{x}_q^q:=\sum_{i=1}^n\vert x_i\vert^q.\]
Thus the smooth part is given by $f(x)=\frac 12\norm{Ax-b}^2$ and the nonsmooth part by $g(x)=\mu \sum_{i=1}^n\vert x_i\vert^q$.
 In case $q=1$ we obtain the well-known lasso problem which is convex, whereas the resulting problem is nonconvex when $q\in[0,1)$. In our numerical experiments we restrict ourselves to the cases $q\in\{1,\frac 12\}$ for which the proximal mapping of $\vert \cdot\vert^q$ is explicitly known. We would like to point out that we can apply our method also when the proximal mapping can only be computed numerically, because usually only a few evaluations of the proximal mapping are required.

Due to the separable structure of $g$ only the proximal mapping of the one-dimensional functions $\vert\cdot\vert^q$ is required in order to compute ${\rm prox}_{\lambda g}$. Given an element $(x,x^*)\in\gph\partial g$, we can choose the matrices $(P, W)\in \M_{P,W}\partial g(x,x^*)$ as diagonal matrices with entries
\[( P_{ii}, W_{ii})=\begin{cases}(1, q(q-1)\mu\vert x_i\vert^{q-2})&\mbox{if $x_i\not=0$,}\\
(0,1)&\mbox{else,}\end{cases}\]
cf. Example \ref{ExEll_q}. Note that in case when $x_i\not=0$ we have  $W_{ii}=\mu\nabla^2\vert\cdot\vert^q(x_i)$. As for the Signorini problem, the nonzero columns of $P$, which correspond to the nonzero components of $x$, form an orthogonal basis for $\rge P$ and can be used to build the matrix $\ee Zk$ in \eqref{EqTRSubProbl_p}.

Although in our convergence analysis the \ssstar Newton direction $\ee sk$ is only used for ensuring superlinear convergence, in practice $\ee sk$ can be also used to accelerate convergence from poor starting points. To this aim  we observed that the trust region concept according to \eqref{Eqrho} is not adequate for the function $\norm{\cdot}_q^q$ if $q>0$. Instead of requiring that $\norm{\ee sk}^2$ is sufficiently small, it seems to be better to bound the  relative quantities
\[\ee {\nu_-}k:=\min\{\ee sk_i/\ee zk_i\mv \ee zk_i\not=0\},\quad \ee {\nu_+}k:=\max\{\ee sk_i/\ee zk_i\mv \ee zk_i\not=0\}.\]
Clearly, if for a nonzero component $\ee zk_i$ there holds $\ee sk_i/\ee zk_i\ll-1$ then the quantity
\[\vert \ee zk_i\vert^q+\nabla \vert\cdot\vert^q(\ee zk_i)\ee sk_i+\frac 12 \nabla^2 \vert\cdot\vert^q(\ee zk_i){\ee sk_i}^2\] cannot be a good approximation for $\vert \ee zk_i+\ee sk_i\vert^q$ and the same holds true when $\ee sk_i/\ee zk_i\gg1$ in case when $q=\frac 12$.
 We thus add to the matrix $\ee Wk$ in \eqref{EqTRSubProbl_p} a diagonal matrix $\ee Dk$ with entries $\ee Dk_{ii}=\ee\zeta k/{\ee zk_i}^2$ for the nonzero components of $\ee zk$, the remaining diagonal elements of $\ee Dk$ can be set arbitrarily, e.g. zero. The parameter $\ee\zeta k$ is updated by the rule
\[\ee \zeta{k+1}=\begin{cases}2\ee \zeta k&\mbox{if ${\ee sk}^T\ee Wk\ee sk\leq 0$}\\
\ee\zeta k\sqrt{\ee\nu k_\epsilon}&\mbox{if ${\ee sk}^T\ee Wk\ee sk> 0$ and $\ee{\nu_\epsilon} k\not\in[0.8,1.2]$,}\\
\ee\zeta k&\mbox{else,}\end{cases}\]
where
\begin{gather*}\ee  {\nu_\epsilon} k:=\begin{cases}-\ee \nu k_{\epsilon-}&\mbox{if $q=1$,}\\\max\{-\ee \nu k_{\epsilon-},\ee\nu k_{\epsilon+}\}&\mbox{if $q=\frac 12$,}\end{cases}\\
\ee {\nu}k_{\epsilon-}:=\min\{\ee sk_i/(\ee zk_i+\epsilon{\rm sign\,}\ee zk_i)\mv \ee zk_i\not=0\},\quad
\ee {\nu}k_{\epsilon+}:=\max\{\ee sk_i/(\ee zk_i+\epsilon{\rm sign\,}\ee zk_i)\mv \ee zk_i\not=0\}.\end{gather*}
For the considered test examples we used $\epsilon=10^{-4}$.
With this modification the superlinear convergence is not affected because of $\ee\zeta k\to 0$ whenever $\ee sk\to 0$, but for some test problems we observed a considerable reduction both in the number of iterations and the number of backtracking steps. The maximum speedup (factor 8) was achieved  for the instance {\tt triazines4}, $\lambda_c=10^{-4}$, cf. Table \ref{TabGSSNSSNAL} below.

In Algorithm \ref{AlgCG} we used  the diagonal for preconditioning in case when the matrix $A$ is available. In case when we have at our disposal only functions for evaluating the matrix-vector products $Ax$ and $A^Ty$ we replace the diagonal of $A^TA$ by $\sigma I$, where $\sigma$ is an estimate of the mean value of the diagonal elements of $A^TA$. We estimate $\sigma$ by choosing randomly 10 vectors $x_i$ and set $\sigma:=\frac 1{10}\sum_{i=1}^{10}\norm{Ax_i}^2/\norm{x_i}^2$.

We tested our method with  9 test instances  obtained from large scale regression problems in the LIBSVM data sets \cite{ChLi11}. As suggested in
[23], for the data sets {\tt pyrim}, {\tt triazines}, {\tt abalone}, {\tt bodyfat}, {\tt housing}, {\tt mpg}, and
{\tt space\_ga}, we expand their original features by using polynomial basis functions over those features. For example, the last digit in {\tt pyrim5} indicates that an order 5 polynomial is used to generate the basis functions. In addition, we also considered 8 real-valued sparse reconstruction problems from the Sparco collection \cite{BeFrHeHeSaYi09}.  One main difference between the two problem classes is that for the former the matrix $A$ is explicitly given, whereas for the problems from the Sparco collection only operator evaluations $Ax$ and $A^Ty$ are available.

In Table \ref{TabGSSNSSNAL} we present the numerical results for these test problems  for the convex case $q=1$. We always used the origin as starting point. The first nine test problems are from the LIBSVM data sets and the remaining ones are from the Sparco collection. For each problem, the number of samples and features is denoted by $m$ and $n$, respectively, and $\lambda_{\max}$is the largest eigenvalue of $A^TA$. The regularization parameter is chosen as $\mu=\lambda_c \norm{A^Tb}_\infty$, where for each test problem we considered two values $\lambda_c\in\{10^{-3},10^{-4}\}$. In the block entitled by GSSN we report the results of our globalized SCD semismooth$^{*}$ Newton method. The column $\norm{A\bar x-b}$ gives the Euclidean norm of the residual, $nnz$ is the number of nonzero elements of the obtained solution and $\ee\lambda k$ is the final parameter in Algorithm \ref{AlgBasGSSN}. Further we report the number of iterations $iter$, the total number of CG-iterations $cg$ and the CPU-time $time$ needed. For comparison we also present the CPU-time for the semismooth Newton augmented Lagrangian method  SSNAL \cite{LiSuTo18}. We observed that the default stopping tolerance $\epsilon=10^{-6}$ for SSNAL yields rather inaccurate results and therefore we ran SSNAL also with the increased accuracy $\epsilon=10^{-10}$ which corresponds approximately with the accuracy obtained by our stopping criterion \eqref{EqStopping}. In the last column we report the CPU-time needed by ZeroFPR. All CPU times are measured in seconds.

\begin{table}[h]
{\scriptsize
\begin{tabular}{|c|c||c|c|c|c|c|c||c|c||c|}
\hline
\multicolumn{2}{|c||}{convex case: $q=1$}&\multicolumn{6}{c||}{GSSN}&\multicolumn{2}{c||}{SSNAL}&\multicolumn{1}{c|}{ZeroFPR}\\
\hline
Problem name&&&&&&&&time&time&\\
(m,n); $\lambda_{\max}$&\raisebox{1.5ex}[-1.5ex]{$\lambda_c$}&\raisebox{1.5ex}[-1.5ex]{$\norm{Ax-b}$}&\raisebox{1.5ex}[-1.5ex]{nnz}&\raisebox{1.5ex}[-1.5ex]{$\ee\lambda k$}&\raisebox{1.5ex}[-1.5ex]{iter}&\raisebox{1.5ex}[-1.5ex]{cg}&\raisebox{1.5ex}[-1.5ex]{time}&{\tiny$\epsilon=10^{-6}$}&{\tiny$\epsilon=10^{-10}$}&\raisebox{1.5ex}[-1.5ex]{time}\\
\hline
log1p.E2006.test&$10^{-3}$&21&9&$9.4\cdot10^{-7}$&43&172&28&10.8&{\bf12.3}&449\\
(3\,308, 1\,771\,946); $1.5\cdot10^7$&$10^{-4}$&13.6&1138&$7.5\cdot10^{-6}$&121&2916&92.9&53.2&{\bf63.8}&3256\\
\hline
log1p.E2006.train&$10^{-3}$&47.2&6&$1.9\cdot10^{-6}$&41&147&87.9&15.4&{\bf17.2}&998\\
(16\,087, 4\,265\,669); $5.9\cdot10^7$&$10^{-4}$&43&628&$9.4\cdot10^{-7}$&99&1906&281&76.8&{\bf102}&10437\\
\hline
space\_ga9&$10^{-3}$&6.95&14&$2.0\cdot10^{-4}$&36&283&0.88&0.62&{\bf0.75}&12.2\\
(3\,107, 5\,005 ); $4.0\cdot10^3$&$10^{-4}$&5.83&40&$8.0\cdot10^{-4}$&63&1189&4.4&1.16&{\bf1.19}&82.6\\
\hline
bodyfat7&$10^{-3}$&0.063&2&$3.2\cdot10^{-3}$&40&138&7.2&1.42&{\bf1.42}&14.3\\
(252, 116\,280); $5.3\cdot10^4$&$10^{-4}$&0.046&4&$6.4\cdot10^{-3}$&55&221&15.4&1.86&{\bf2}&31.9\\
\hline
housing7&$10^{-3}$&52&164&$5.1\cdot10^{-4}$&144&2664&33.2&2.41&{\bf2.74}&1555$^{1)}$\\
(506, 77\,520); $3.3\cdot10^5$&$10^{-4}$&30.7&290&$1.0\cdot10^{-3}$&302&17295&162&3.46&{\bf4.09}&3784$^{1)}$\\
\hline
abalone7&$10^{-3}$&139&27&$9.6\cdot10^{-4}$&73&814&8.91&1.64&{\bf1.99}&449$^{1)}$\\
(4\,177, 6\,435); $5.2\cdot10^5$&$10^{-4}$&133&63&$2.4\cdot10^{-4}$&126&3390&40.8&3.24&{\bf3.32}&1295$^{1)}$\\
\hline
mpg7&$10^{-3}$&46.4&50&$1.6\cdot10^{-3}$&66&1047&0.5&0.18&{\bf0.17}&8.15\\
(392, 3\,432 ); $1.3\cdot10^4$&$10^{-4}$&36.6&132&$1.6\cdot10^{-3}$&114&4791&2.13&0.23&{\bf0.24}&51.3\\
\hline
triazines4&$10^{-3}$&0.587&2224&$6.4\cdot10^{-6}$&394&9478&422&12.4&{\bf14.4}&12086$^{1)}$\\
(186, 557\,845); $2.1\cdot10^7$&$10^{-4}$&0.304&446&$2.5\cdot10^{-5}$&916&108291&2506&30.9&{\bf34}&45726$^{1)}$\\
\hline
pyrim5&$10^{-3}$&0.143&97&$1.3\cdot10^{-3}$&230&8146&38.5&2.65&{\bf2.67}&2075$^{1)}$\\
(74, 169\,911); $1.2\cdot10^6$&$10^{-4}$&0.070&93&$2.6\cdot10^{-3}$&393&20536&152&3.38&{\bf3.51}&3517$^{1)}$\\
\hline
\hline
blknheavi&$10^{-3}$&1.34&17&$1.4\cdot10^{0}$&111&1202&{\bf0.14}&0.19&0.19&14.0\\
(1\,024, 1\,024); $7.1\cdot10^2$&$10^{-4}$&0.155&20&$1.4\cdot10^{0}$&111&1318&{\bf0.13}&0.18&0.2&19.7\\
\hline
srcsep1&$10^{-3}$&0.459&15578&$1.3\cdot10^{-1}$&113&4416&116&245&339&{\bf82.5}\\
(29\ 166, 57\ 344 ); $3.6$&$10^{-4}$&0.064&22920&$1.3\cdot10^{-1}$&209&18176&469&476&1490&{\bf367}\\
\hline
srcsep2&$10^{-3}$&0.401&18377&$1.3\cdot10^{-1}$&126&4124&148&374&508&{\bf128}\\
(29\ 166, 86\ 016); $5.0$&$10^{-4}$&0.046&26270&$2.5\cdot10^{-1}$&273&24321&844&703&2050&{\bf709}\\
\hline
srcsep3&$10^{-3}$&4.57&30001&$1.0\cdot10^{0}$&80&961&{\bf28.4}&15.8&56&83.8\\
(196\,608, 196\,608); $5.7$&$10^{-4}$&0.753&93944&$1.0\cdot10^{0}$&148&5210&{\bf141}&83.5&254&241\\
\hline
soccer1&$10^{-3}$&690&4&$2.0\cdot10^{-7}$&7&18&{\bf0.41}&0.51&0.72&1.77\\
(3\,200, 4\,096); $3.3\cdot10^6$&$10^{-4}$&632&8&$2.0\cdot10^{-7}$&18&53&1.19&0.68&{\bf0.95}&3.55\\
\hline
soccer2&$10^{-3}$&884&4&$3.9\cdot10^{-7}$&6&14&{\bf0.36}&0.48&0.64&2.20\\
(3\,200, 4\,096); $3.3\cdot10^6$&$10^{-4}$&856&8&$1.2\cdot10^{-8}$&16&45&1.06&0.67&{\bf0.94}&4.15\\
\hline
blurrycam&$10^{-3}$&3.47&1868&$5.0\cdot10^{-1}$&31&231&{\bf2.01}&1.95&3.54&2.61\\
(65\,536, 65\,536 ); $1.0$&$10^{-4}$&1.72&6260&$5.0\cdot10^{-1}$&51&758&{\bf6.1}&4.17&9.69&8.98\\
\hline
blurspike&$10^{-3}$&0.057&2488&$4.0\cdot10^{0}$&60&544&{\bf1.23}&1.9&3.46&4.49\\
(16\,384, 16\,384 ); $1.0$&$10^{-4}$&0.021&12432&$1.0\cdot10^{0}$&92&2893&{\bf5.72}&4.91&9.4&8.19\\
\hline
\multicolumn{10}{l}{{}$^{1)}$ exceeded maximum iterations}
\end{tabular}
}
\caption{\label{TabGSSNSSNAL}Performance of GSSN in the convex case $q=1$}
\end{table}

We can see that for the problems from the LIBSVM data sets SSNAL outperforms GSSN. Whereas for the first 4 problems the difference is not so tremendous, for the remaining 5 test problems SSNAL is up to 70 times faster than GSSN. A possible explanation for this phenomena could be that for these problems the primal problem is degenerated but not the dual one, which is solved by SSNAL. Indeed, consider the matrix $\bar A$ formed by the columns of $A$ corresponding to the nonzero elements of the obtained solution $\xb$. Then for SCD regularity of $\partial \varphi$ at $(\xb,0)$ it is necessary that the matrix $\bar A$ has full column rank and it seems that this condition is violated for the problems, where GSSN does not perform well. Indeed, let us  define the numerical rank of $\bar A$ as the number
\[r(\bar A):=\max\{k\mv \sigma_k\geq 10^{-14}\sigma_1\},\]
where $\sigma_1\geq\sigma_2\geq\ldots$ denote the singular values of $\bar A$ in descending order. Then for the 5 problems {\tt housing7}--{\tt pyrim5} and for each $\lambda_c\in\{10^{-3},10^{-4}\}$ the numerical rank $r(\bar A)$, as reported  in Table \ref{TabRank}, is much smaller than the number of nonzeros in $\xb$. Note that for the problems {\tt triazines4} and {\tt pyrim5} the numerical rank is also smaller than the number of rows. This justifies the poor performance of GSSN as compared with SSNAL:

\begin{table}[h]
\begin{center}
\begin{tabular}{|c|c|c|c|c|c|c|c|c|c|c|}
\hline
&\multicolumn{2}{|c}{housing7}&\multicolumn{2}{|c}{abalone7}&\multicolumn{2}{|c}{mpg7}&\multicolumn{2}{|c}{triazines4}&\multicolumn{2}{|c|}{pyrim5}\\
\cline{2-11}
&$10^{-3}$&$10^{-4}$&$10^{-3}$&$10^{-4}$&$10^{-3}$&$10^{-4}$&$10^{-3}$&$10^{-4}$&$10^{-3}$&$10^{-4}$\\
\hline
$r(\bar A)$&91&220&18&41&34&96&120&163&55&70\\
$nnz$&164&290&27&63&50&132&2224&446&97&93\\
\hline
\end{tabular}
\caption{\label{TabRank}Rank deficiency for several degenerate test problems in the convex case}
\end{center}
\end{table}

Finally note that the final parameter $\ee \lambda k$ is much smaller than $1/\lambda_{\max}$. In the very beginning of the algorithm, $\lambda_k$ usually has the same magnitude as $1/\lambda_{\max}$, but when the algorithm approaches the (sparse) solution $\xb$, $\ee \lambda k$ is approximately the reciprocal of the largest eigenvalue of the matrix $\bar A^T\bar A$. For the problems from the LIBSVM data set, ZeroFPR performs very poor. For eight test instances, ZeroFPR failed to converge and for the remaining test instances the computation time (with exception of {\tt bodyfat7}) is very high, even when compared with GSSN.

The picture changes when we consider the test problems from the Sparco collection. Now GSSN is throughout faster than SSNAL except for the two problems {\tt soccer1} and {\tt soccer2} with $\lambda_c=10^{-4}$, where SSNAl needed slightly less CPU time. Among the three considered methods, ZeroFPR performs best for the problems {\tt srcsep1} and {\tt srcsep2}. Note that SSNAL now takes significantly more time to solve the problems with increased accuracy. This is on the one  hand due to the fact that the matrix $A$ is not available and therefore sparsity cannot be exploited, on the other hand SSNAL is an augmented Lagrangian method which converges only linearly when the penalization parameter remains bounded.

\begin{table}[h]
\begin{center}
{\scriptsize
\begin{tabular}{|c|c|c|c|c|c|c|c|c|c|c|}
\hline
Problem name, $(m,n)$&$\lambda_c$&Method&$\varphi(\ee x1)$&$t_1$&$\varphi_{opt}$&$t_{tot}$&it&$t_{avg}$&$\norm{Ax-b}$&nnz\\
\hline
&&GSSN&224&10.5&223.6&22.6&3&{\bf7.52}&20.1&8\\log1p.E2006.test&\raisebox{1.5ex}[-1.5ex]{$10^{-4}$}&HpgSRN&235.4&8.98&235.4&39.5&4&9.88&21.1&3\\
\cline{2-11}
(3\,308, 1\,771\,946)&&GSSN&135.5&42.1&130.4&354&11&32.2&9.396&637\\&\raisebox{1.5ex}[-1.5ex]{$10^{-5}$}&HpgSRN&136.4&47.2&135.5&221&11&{\bf20.1}&12.53&434\\
\hline
&&GSSN&1158&33.3&1158&50.3&2&{\bf25.2}&45.94&9\\log1p.E2006.train&\raisebox{1.5ex}[-1.5ex]{$10^{-4}$}&HpgSRN&1158&41.9&1155&684&7&97.7&46.77&3\\
\cline{2-11}
(16\,087, 4\,265\,669)&&GSSN&1004&82.3&1003&747&9&{\bf83}&41.54&314\\&\raisebox{1.5ex}[-1.5ex]{$10^{-5}$}&HpgSRN&1003&188&1003&1.27e+03&4&316&42.69&221\\
\hline
&&GSSN&34.21&0.3&34.06&0.79&3&{\bf0.26}&6.63&8\\space\_ga9&\raisebox{1.5ex}[-1.5ex]{$10^{-3}$}&HpgSRN&34.29&0.53&34.29&8.59&4&2.15&6.771&7\\
\cline{2-11}
(3\,107, 5\,005 )&&GSSN&19.95&0.3&19.95&0.59&2&{\bf0.29}&5.769&24\\&\raisebox{1.5ex}[-1.5ex]{$10^{-4}$}&HpgSRN&19.93&0.4&19.93&53.5&4&13.4&5.78&23\\
\hline
&&GSSN&0.3282&0.39&0.3282&0.64&2&{\bf0.32}&0.1432&2\\bodyfat7&\raisebox{1.5ex}[-1.5ex]{$10^{-3}$}&HpgSRN&0.3187&0.09&0.3187&0.73&2&0.37&0.3016&1\\
\cline{2-11}
(252, 116\,280)&&GSSN&0.0344&0.31&0.0344&0.5&2&{\bf0.25}&0.0481&2\\&\raisebox{1.5ex}[-1.5ex]{$10^{-4}$}&HpgSRN&0.0344&0.12&0.0344&1.39&2&0.7&0.0481&2\\
\hline
&&GSSN&2102&1.68&2102&3.22&2&{\bf1.61}&44.83&51\\housing7&\raisebox{1.5ex}[-1.5ex]{$10^{-3}$}&HpgSRN&2033&13.1&2007&26.1&3&8.69&43.15&44\\
\cline{2-11}
(506, 77\,520)&&GSSN&680.4&1.93&651.9&10.8&7&{\bf1.54}&26.24&140\\&\raisebox{1.5ex}[-1.5ex]{$10^{-4}$}&HpgSRN&658.8&33.1&648.1&62&6&10.3&28.32&103\\
\hline
&&GSSN&1.029e+04&0.78&1.024e+04&3.67&5&{\bf0.73}&135.5&13\\abalone7&\raisebox{1.5ex}[-1.5ex]{$10^{-3}$}&HpgSRN&1.023e+04&1.44&1.023e+04&58.7&4&14.7&135.9&9\\
\cline{2-11}
(4\,177, 6\,435)&&GSSN&9025&0.69&9015&2.32&3&{\bf0.77}&132.5&37\\&\raisebox{1.5ex}[-1.5ex]{$10^{-4}$}&HpgSRN&9002&4.18&8990&180&4&45.1&132.2&28\\
\hline
&&GSSN&1401&0.11&1384&0.85&7&{\bf0.12}&43.41&23\\mpg7&\raisebox{1.5ex}[-1.5ex]{$10^{-3}$}&HpgSRN&1392&0.76&1351&2.32&4&0.58&43.69&20\\
\cline{2-11}
(392, 3\,432 )&&GSSN&759.2&0.14&697.9&0.92&8&{\bf0.12}&31.15&96\\&\raisebox{1.5ex}[-1.5ex]{$10^{-4}$}&HpgSRN&746.1&6.16&668.3&70.2&9&7.8&29.52&78\\
\hline
&&GSSN&0.2808&7.13&0.2808&14.5&2&{\bf7.23}&0.3555&73\\triazines4&\raisebox{1.5ex}[-1.5ex]{$10^{-4}$}&HpgSRN&0.2738&34&0.2738&68.6&3&22.9&0.3578&67\\
\cline{2-11}
(186, 557\,845)&&GSSN&0.0706&8.66&0.0682&33.2&4&{\bf8.29}&0.2764&108\\&\raisebox{1.5ex}[-1.5ex]{$10^{-5}$}&HpgSRN&0.0699&91.4&0.069&169&4&42.2&0.2902&94\\
\hline
&&GSSN&0.0391&0.95&0.0352&2.77&3&{\bf0.92}&0.1011&37\\pyrim5&\raisebox{1.5ex}[-1.5ex]{$10^{-4}$}&HpgSRN&0.0377&3.22&0.0338&15.4&7&2.2&0.1046&29\\
\cline{2-11}
(74, 169\,911)&&GSSN&6.644e-03&1.49&6.044e-03&12.3&5&{\bf1.46}&0.0682&54\\&\raisebox{1.5ex}[-1.5ex]{$10^{-5}$}&HpgSRN&6.728e-03&1.99&6.085e-03&41.3&11&3.75&0.0689&47\\
\hline
\hline
\end{tabular}
}
\caption{\label{Tab_q12_LIBSVM_Sol}Numerical results for problems from the LIBSVM dataset in the nonconvex case $q=\frac 12$  with starting point equal to the solution of the convex case}
\end{center}
\end{table}
\begin{table}[h]
\begin{center}
{\scriptsize
\begin{tabular}{|c|c|c|c|c|c|c|c|c|c|c|}
\hline
Problem name, $(m,n)$&$\lambda_c$&Method&$\varphi(\ee x1)$&$t_1$&$\varphi_{opt}$&$t_{tot}$&it&$t_{avg}$&$\norm{Ax-b}$&nnz\\
\hline
&&GSSN&5.581&0.01&5.581&0.02&2&{\bf0.01}&0.108&12\\blknheavi&\raisebox{1.5ex}[-1.5ex]{$10^{-3}$}&ZeroFPR&5.581&2.18&5.581&2.32&2&1.16&0.108&12\\
\cline{2-11}
(1\,024, 1\,024)&&GSSN&0.5612&0.01&0.5612&0.02&2&{\bf0.009}&0.0799&12\\&\raisebox{1.5ex}[-1.5ex]{$10^{-4}$}&ZeroFPR&0.5612&1.91&0.5612&2.04&2&1.02&0.0799&12\\
\hline
&&GSSN&1.047&34.7&1.042&168&11&{\bf15.3}&0.3157&10790\\srcsep1&\raisebox{1.5ex}[-1.5ex]{$10^{-4}$}&ZeroFPR&1.049&38.5&1.047&125&6&20.8&0.3366&10685\\
\cline{2-11}
(29\ 166, 57\ 344) &&GSSN&0.0124&119&0.0122&387&11&{\bf35.2}&0.0164&20586\\&\raisebox{1.5ex}[-1.5ex]{$10^{-6}$}&ZeroFPR&0.0123&143&0.0122&640&11&58.2&0.0183&20455\\
\hline
&&GSSN&1.109&50.3&1.107&179&9&{\bf19.9}&0.325&11582\\srcsep2&\raisebox{1.5ex}[-1.5ex]{$10^{-4}$}&ZeroFPR&1.111&50.6&1.11&121&4&30.3&0.3376&11485\\
\cline{2-11}
(29\ 166, 86\ 016)&&GSSN&0.013&118&0.0128&719&11&{\bf65.3}&0.016&21958\\&\raisebox{1.5ex}[-1.5ex]{$10^{-6}$}&ZeroFPR&0.013&161&0.0129&732&11&66.5&0.0185&21739\\
\hline
&&GSSN&19.07&19.2&19.07&49.8&5&{\bf9.95}&2.248&32641\\srcsep3&\raisebox{1.5ex}[-1.5ex]{$10^{-4}$}&ZeroFPR&19.07&47.6&19.06&118&4&29.4&2.25&32590\\
\cline{2-11}
(196\,608, 196\,608)&&GSSN&2.559&27.8&2.559&61.5&4&{\bf15.4}&0.5488&75887\\&\raisebox{1.5ex}[-1.5ex]{$10^{-5}$}&ZeroFPR&2.558&79.3&2.558&206&4&51.5&0.5536&75538\\
\hline
&&GSSN&9.495e+05&0.64&9.495e+05&0.86&2&{\bf0.43}&740.9&3\\soccer1&\raisebox{1.5ex}[-1.5ex]{$10^{-3}$}&ZeroFPR&9.495e+05&2.69&9.495e+05&3.64&2&1.82&740.9&3\\
\cline{2-11}
(3\,200, 4\,096)&&GSSN&2.734e+05&0.54&2.701e+05&1.7&3&{\bf0.57}&517.1&9\\&\raisebox{1.5ex}[-1.5ex]{$10^{-4}$}&ZeroFPR&2.694e+05&6.04&2.694e+05&11.5&2&5.77&532.4&8\\
\hline
&&GSSN&9.931e+05&0.34&9.931e+05&1.62&2&{\bf0.81}&885.4&4\\soccer2&\raisebox{1.5ex}[-1.5ex]{$10^{-3}$}&ZeroFPR&9.931e+05&3.08&9.931e+05&11.7&2&5.84&885.4&4\\
\cline{2-11}
(3\,200, 4\,096)&&GSSN&4.236e+05&0.79&4.236e+05&4.4&2&{\bf2.2}&804.7&8\\&\raisebox{1.5ex}[-1.5ex]{$10^{-4}$}&ZeroFPR&4.236e+05&31.4&4.236e+05&51.5&2&25.8&804.7&8\\
\hline
&&GSSN&6.2&9.89&6.197&56.7&8&{\bf7.09}&1.965&2683\\blurrycam&\raisebox{1.5ex}[-1.5ex]{$10^{-4}$}&ZeroFPR&6.203&11&6.203&26.3&3&8.76&1.957&2728\\
\cline{2-11}
(65\,536, 65\,536 )&&GSSN&1.791&11.8&1.702&106&5&{\bf21.3}&1.278&8457\\&\raisebox{1.5ex}[-1.5ex]{$10^{-5}$}&ZeroFPR&1.791&7.92&1.701&109&5&21.8&1.279&8417\\
\hline
&&GSSN&2.924e-03&0.91&2.924e-03&2.97&4&{\bf0.74}&0.0569&965\\blurspike&\raisebox{1.5ex}[-1.5ex]{$10^{-4}$}&ZeroFPR&2.924e-03&4.53&2.924e-03&14.8&4&3.71&0.0577&867\\
\cline{2-11}
(16\,384, 16\,384 )&&GSSN&8.029e-04&7.37&8.028e-04&23.1&5&{\bf4.61}&0.0157&9917\\&\raisebox{1.5ex}[-1.5ex]{$10^{-5}$}&ZeroFPR&8.032e-04&13.1&8.024e-04&51.1&6&8.52&0.0159&9838\\
\hline
\end{tabular}
}
\caption{\label{Tab_q12_Sparco_Sol}Numerical results for test problems from the Sparco collection in the nonconvex case $q=\frac 12$  with starting point equal to the solution of the convex case}
\end{center}
\end{table}

Let us now turn to the nonconvex case $q=\frac 12$. We observed that the considered test problems have many local minima and, dependent on the chosen starting point, our method may converge to any of these minima. Therefore we implemented the following heuristic to obtain a better solution:
\begin{algorithm}[Heuristic for the nonconvex case $q=\frac12$]\label{AlgHeur}\ \\
  {\bf Input: } Starting point $\ee x0\in \R^n$\\
  {\bf Initialization: } $j \leftarrow 0$, $\varphi_{\rm opt}\leftarrow\infty$, $j_{\rm opt}\leftarrow0$;\\
  1. \begin{minipage}[t]{\myAlgBox} Compute a (local) solution $\ee {\bar x}j$ using $\ee xj$ as starting point. Let $\ee\lambda j$ denote the final parameter $\ee\lambda k$ when the method terminates.\end{minipage}\\
  2. \begin{minipage}[t]{\myAlgBox}
  {\tt if $\varphi(\ee{\bar x}j)<\varphi_{\rm opt}$ then $\varphi_{\rm opt}\leftarrow\varphi(\ee{\bar x}j)$, $j_{\rm opt}\leftarrow j$.}
  \end{minipage}\\
  3. \begin{minipage}[t]{\myAlgBox}
  {\tt if $j>\max\{j_{\rm opt}+2,10\} \vee \big(j> 1 \wedge \varphi(\ee{\bar x}j)==\varphi(\ee {\bar x}{j-1})\big)$ then stop.}
  \end{minipage}\\
  4. \begin{minipage}[t]{\myAlgBox}
  Compute $\ee zj\in T_{1000\ee\lambda j}(\ee{\bar x}j)$, denote $\ee Ij:=\{i\mv \ee zj_i\not=0\}$ and compute
  an (approximate) solution $\ee{\triangle x}j$ of the system
  \begin{equation}\label{EqNormalEq}\min_{\triangle x\in\R^n} \frac12\norm{A(\ee{\bar x}j+\triangle x)-b}^2\quad\mbox{ s.t. }\quad\triangle x_i=0,\ i\not\in \ee Ij.\end{equation}
  \end{minipage}\\
  5. \begin{minipage}[t]{\myAlgBox}
  Set $\ee x{j+1}\leftarrow\ee {\bar x}j+\frac 14\ee{\triangle x}j$, $j\leftarrow j+1$ and  go to Step 1.
  \end{minipage}\\
  \end{algorithm}
  In fact, we solve the system \eqref{EqNormalEq} rather imprecisely by means of a CG method stopping when we decreased the initial residual by a factor of $0.1$. The idea behind Algorithm \ref{AlgHeur} is to use a starting point $\ee x{j+1}$ having a smaller residual than the previously obtained local solution $\ee{\bar x}j$, i.e., $\norm{A\ee x{j+1}-b}<\norm{A\ee{\bar x}j-b}$.

  In most cases, the first iteration of the heuristic is the most time consuming one. In Table \ref{Tab_q12_LIBSVM_Sol} we present some results of Algorithm \ref{AlgHeur} when applied to the nine  test problems from the LIBSVM data set  with starting point as the corresponding solution of the convex case with $\lambda_c=10^{-4}$. For each problem we report our findings for two values of $\lambda_c$. We compare our method with the very recent method HpgSRN \cite{WuPaYa23}, a regularized Newton method for $\ell_q$-norm composite optimization problems. In \cite{WuPaYa23}, the reader can find a comparison of HpgSRN with other methods, including ZeroFPR, showing that HpgSRN is superior to ZeroFPR on this problem class. Note that HpgSRN uses also a proximal gradient approach for ensuring global convergence and therefore the parameter $\ee \lambda j$ in Algorithm \ref{AlgHeur} is defined. We state for the first iteration the objective function value $\varphi(\ee x1)$ and the time $t_1$ needed to compute the first local minimizer $\ee x1$. Finally  we give the best obtained objective function value $\varphi_{\rm opt}$ and the total time $t_{tot}$ for running Algorithm \ref{AlgHeur}. Further we report the number $it$ of loop passes   and the average time $t_{avg}=t_{tot}/it$  needed for computing one local minimizer. In the columns entitled with $\norm{Ax-b}$ and $nnz$ we present the residual and the number of nonzero elements for the best local solution.

  As we can see in Table \ref{Tab_q12_LIBSVM_Sol}, in most  cases the heuristic could not improve the first local minimizer very much. In seven test instances, which can be identified by $it=2$, the initial local solution could not be improved at all. This indicates that the solution of the convex problem ($q=1$) is a good starting point. However, for the two test problems {\tt mpg7} ($\lambda_c=10^{-4}$) and {\tt pyrim5} ($\lambda_c=10^{-4},10^{-5}$) we obtained a decrease in the objective function of approximately 10 percent. Concerning the computation time we see that, with the exception of the instance {\tt log1p.E2006.test} ($\lambda_c=10^{-5}$), GSSN performs better than HpgSRN. When comparing the average computation time $t_{avg}$, the difference can be up to a factor 65 as in {\tt mpg7} ($\lambda_c=10^{-4}$). There is also another difference in the run-time behaviour between GSSN and HpgSRN: For GSSN, in all cases the average time  $t_{avg}$ is smaller or at most slightly larger than $t_1$, indicating that the first pass of the loop in Algorithm \ref{AlgHeur} tends to be the most time consuming one. However, for HpgSRN the average time $t_{avg}$ is considerably larger that the time $t_1$ for the problems {\tt log1p.E2006.train, space\_ga9, bodyfat7, abalone7.}

Finally, comparing the results from Table \ref{Tab_q12_LIBSVM_Sol}  with the ones of Table \ref{TabGSSNSSNAL} we can see that nonconvex regularization $q=\frac 12$ tends to produce a smaller residual $\norm{Ax-b}$ with less nonzero elements $nnz$. This is in accordance with other papers in this field \cite{XuZhWaChLi10}.

In Table \ref{Tab_q12_Sparco_Sol} we present our numerical results for the seven test problems from the Sparco collection.  Since HpgSRN requires that the matrix $A$ is available, we could not apply HpgSRN to these problems and therefore we compare GSSN with ZeroFPR. The average computation time per iteration is throughout smaller for GSSN, but for the three  test instances
{\tt srcsep1, srcsep2, blurrycam} with $\lambda_c=10^{-4}$ the overall computation time for GSSN was larger due to the higher number of iterations required by Algorithm \ref{AlgHeur}.
\begin{table}
\begin{center}
{\tiny
\begin{tabular}{|c|c|c|c|c|c|c|c|c|c|c|}
\hline
Problem name, $(m,n)$&$\lambda_c$&Method&$\varphi(\ee x1)$&$t_1$&$\varphi_{opt}$&$t_{tot}$&it&$t_{avg}$&$\norm{Ax-b}$&nnz\\
\hline
&&GSSN&234.9&35.4&223.1&49.3&3&{\bf16.4}&20.17&6\\log1p.E2006.test&\raisebox{1.5ex}[-1.5ex]{$10^{-4}$}&HpgSRN&234.6&109&234.6&139&4&34.8&20.93&5\\
\cline{2-11}
(3\,308, 1\,771\,946)&&GSSN&156.3&98.8&135.4&426&11&{\bf38.7}&9.693&659\\&\raisebox{1.5ex}[-1.5ex]{$10^{-5}$}&HpgSRN&149.6&727&146.2&923&11&83.9&13.25&418\\
\hline
&&GSSN&1161&129&1156&171&3&{\bf57}&46.3&7\\log1p.E2006.train&\raisebox{1.5ex}[-1.5ex]{$10^{-4}$}&HpgSRN&1159&399&1158&772&5&154&46.84&5\\
\cline{2-11}
(16\,087, 4\,265\,669)&&GSSN&1012&293&999.3&681&5&{\bf136}&41.59&280\\&\raisebox{1.5ex}[-1.5ex]{$10^{-5}$}&HpgSRN&-$^{1)}$&-&-&-&-&-&-&-\\
\hline
&&GSSN&38.35&0.85&33.71&2.08&5&{\bf0.42}&6.662&9\\space\_ga9&\raisebox{1.5ex}[-1.5ex]{$10^{-3}$}&HpgSRN&36.44&0.3&33.64&8.62&5&1.72&6.767&7\\
\cline{2-11}
(3\,107, 5\,005 )&&GSSN&21.73&3.01&20.91&3.86&4&{\bf0.96}&6.138&18\\&\raisebox{1.5ex}[-1.5ex]{$10^{-4}$}&HpgSRN&20.5&4.36&20.19&105&8&13.1&5.719&22\\
\hline
&&GSSN&0.3187&2.14&0.3187&2.36&2&1.18&0.3016&1\\bodyfat7&\raisebox{1.5ex}[-1.5ex]{$10^{-3}$}&HpgSRN&0.3187&1.18&0.3187&1.78&2&{\bf0.89}&0.3016&1\\
\cline{2-11}
(252, 116\,280)&&GSSN&0.0344&6.88&0.0344&7.07&2&3.53&0.0481&2\\&\raisebox{1.5ex}[-1.5ex]{$10^{-4}$}&HpgSRN&0.0344&4.96&0.0344&5.64&2&{\bf2.82}&0.0481&2\\
\hline
&&GSSN&2229&7.53&2165&13.5&4&{\bf3.38}&54.47&29\\housing7&\raisebox{1.5ex}[-1.5ex]{$10^{-3}$}&HpgSRN&2113&35.6&2113&40.4&2&20.2&52.6&33\\
\cline{2-11}
(506, 77\,520)&&GSSN&750.8&16.9&657.5&40.2&11&{\bf3.65}&25.25&128\\&\raisebox{1.5ex}[-1.5ex]{$10^{-4}$}&HpgSRN&729.5&261&728.6&270&3&89.8&30.79&108\\
\hline
&&GSSN&1.089e+04&6.21&1.036e+04&14.6&8&{\bf1.82}&136.1&11\\abalone7&\raisebox{1.5ex}[-1.5ex]{$10^{-3}$}&HpgSRN&1.043e+04&11.3&1.037e+04&40.4&3&13.5&137&11\\
\cline{2-11}
(4\,177, 6\,435)&&GSSN&9056&14.5&9020&16.4&3&{\bf5.47}&132.1&35\\&\raisebox{1.5ex}[-1.5ex]{$10^{-4}$}&HpgSRN&9039&90.9&9001&358&6&59.7&132&31\\
\hline
&&GSSN&1400&0.31&1339&1&9&{\bf0.11}&43.48&20\\mpg7&\raisebox{1.5ex}[-1.5ex]{$10^{-3}$}&HpgSRN&1395&0.98&1368&4.47&6&0.75&45.2&19\\
\cline{2-11}
(392, 3\,432 )&&GSSN&791.3&0.92&692.2&1.81&8&{\bf0.23}&30.81&83\\&\raisebox{1.5ex}[-1.5ex]{$10^{-4}$}&HpgSRN&760.2&29.7&679.1&77.2&7&11&29.99&81\\
\hline
&&GSSN&0.4481&75.3&0.4481&85.6&2&{\bf42.8}&0.797&41\\triazines4&\raisebox{1.5ex}[-1.5ex]{$10^{-4}$}&HpgSRN&0.3458&573&0.3458&589&2&295&0.5631&64\\
\cline{2-11}
(186, 557\,845)&&GSSN&0.0806&174&0.0806&180&2&{\bf89.9}&0.3068&111\\&\raisebox{1.5ex}[-1.5ex]{$10^{-5}$}&HpgSRN&-$^{1)}$&-&-&-&-&-&-&-\\
\hline
&&GSSN&0.0525&7.14&0.0525&7.96&2&{\bf3.98}&0.1923&30\\pyrim5&\raisebox{1.5ex}[-1.5ex]{$10^{-4}$}&HpgSRN&0.0405&46&0.0405&49.3&3&16.4&0.152&26\\
\cline{2-11}
(74, 169\,911)&&GSSN&7.284e-03&16.7&7.266e-03&21.9&3&{\bf7.3}&0.0756&45\\&\raisebox{1.5ex}[-1.5ex]{$10^{-5}$}&HpgSRN&7.167e-03&281&7.093e-03&287&3&95.8&0.072&47\\
\hline
\hline
&&GSSN&38.85&0.1&5.581&0.13&3&{\bf0.04}&0.108&12\\blknheavi&\raisebox{1.5ex}[-1.5ex]{$10^{-3}$}&ZeroFPR&5.581&7.57&5.581&7.71&2&3.85&0.108&12\\
\cline{2-11}
(1\,024, 1\,024)&&GSSN&3.603&0.23&0.5612&0.26&3&{\bf0.09}&0.0799&12\\&\raisebox{1.5ex}[-1.5ex]{$10^{-4}$}&ZeroFPR&0.5612&8.17&0.5612&8.3&2&4.15&0.0799&12\\
\hline
&&GSSN&1.939&83.8&1.443&181&11&{\bf16.5}&0.3931&12867\\scrsep1&\raisebox{1.5ex}[-1.5ex]{$10^{-4}$}&ZeroFPR&1.544&103&1.421&297&11&27&0.3992&12219\\
\cline{2-11}
(29\ 166, 57\ 344 )&&GSSN&0.0229&697&0.0219&1.6e+03&11&{\bf146}&0.0174&23223\\&\raisebox{1.5ex}[-1.5ex]{$10^{-6}$}&ZeroFPR&0.023&1.2e+03&0.0213&3e+03&11&273&0.0199&22648\\
\hline
&&GSSN&1.746&182&1.599&471&11&{\bf42.8}&0.4032&15313\\scrsep2&\raisebox{1.5ex}[-1.5ex]{$10^{-4}$}&ZeroFPR&1.922&292&1.764&621&11&56.5&0.4805&15253\\
\cline{2-11}
(29\ 166, 86\ 016)&&GSSN&0.0197&1.08e+03&0.0196&2.03e+03&11&{\bf185}&0.0155&26232\\&\raisebox{1.5ex}[-1.5ex]{$10^{-6}$}&ZeroFPR&0.0277&5.86e+03&0.0267&8.53e+03&11&775&0.0169&27452\\
\hline
&&GSSN&23.11&140&19.37&260&11&{\bf23.7}&2.467&30242\\scrsep3&\raisebox{1.5ex}[-1.5ex]{$10^{-4}$}&ZeroFPR&21.38&687&19.34&1.17e+03&11&107&2.286&32197\\
\cline{2-11}
(196\,608, 196\,608)&&GSSN&3.02&470&2.671&917&11&{\bf83.3}&0.5712&75142\\&\raisebox{1.5ex}[-1.5ex]{$10^{-5}$}&ZeroFPR&2.739&2.12e+03&2.684&2.44e+03&10&244&0.5507&75951\\
\hline
&&GSSN&1.406e+06&17.6&9.495e+05&18.6&3&{\bf6.21}&740.9&3\\soccer1&\raisebox{1.5ex}[-1.5ex]{$10^{-3}$}&ZeroFPR&5.126e+10$^{2)}$&2.56e+03&-&-&-&-&-&- \\
\cline{2-11}
(3\,200, 4\,096)&&GSSN&3.158e+05&32.6&2.701e+05&34.3&3&{\bf11.4}&517.1&9\\&\raisebox{1.5ex}[-1.5ex]{$10^{-4}$}&ZeroFPR&4.584e+09$^{2)}$&2.55e+03 &-&-&-&-&-&- \\
\hline
&&GSSN&9.945e+05&15.9&9.931e+05&19.2&3&{\bf6.4}&885.4&4\\soccer2&\raisebox{1.5ex}[-1.5ex]{$10^{-3}$}&ZeroFPR&3.011e+10$^{2)}$&2.12e+03  &-&-&-&-&-&- \\
\cline{2-11}
(3\,200, 4\,096)&&GSSN&4.330e+05&28.8&4.189e+05&30.6&3&{\bf10.2}&691.4&19\\&\raisebox{1.5ex}[-1.5ex]{$10^{-4}$}&ZeroFPR&3.885e+09$^{2)}$&2.13e+03  &-&-&-&-&-&- \\
\hline
&&GSSN&6.721&17.1&6.194&75.4&8&9.42&1.984&2604\\blurrycam&\raisebox{1.5ex}[-1.5ex]{$10^{-4}$}&ZeroFPR&6.527&13.4&6.199&56&6&{\bf9.33}&1.999&2573\\
\cline{2-11}
(65\,536, 65\,536 )&&GSSN&1.865&34.3&1.708&175&9&{\bf19.5}&1.25&8791\\&\raisebox{1.5ex}[-1.5ex]{$10^{-5}$}&ZeroFPR&1.744&40.2&1.712&205&9&22.8&1.251&8797\\
\hline
&&GSSN&2.937e-03&2.23&2.919e-03&4.96&5&{\bf0.99}&0.0572&935\\blurspike&\raisebox{1.5ex}[-1.5ex]{$10^{-4}$}&ZeroFPR&2.930e-03&4.69&2.918e-03&17.2&5&3.45&0.0573&913\\
\cline{2-11}
(16\,384, 16\,384 )&&GSSN&1.320e-03&5.03&8.022e-04&33.2&8&{\bf4.15}&0.0162&9713\\&\raisebox{1.5ex}[-1.5ex]{$10^{-5}$}&ZeroFPR&8.066e-04&13.5&8.034e-04&83.7&11&7.61&0.0156&9919\\
\hline
\multicolumn{11}{l}{{}$^{1)}$ HpgSRN exceeded time limit}\quad{}$^{2)}$ ZeroFPR exceeded maximum iterations
\end{tabular}
}
\caption{\label{Tab_q12_LIBSVM_Rand}Numerical results in the nonconvex case $q=\frac 12$  with random starting point}
\end{center}
\end{table}

We also performed tests with randomly chosen starting points, the results are given in Table \ref{Tab_q12_LIBSVM_Rand}. Due to the worse starting point, the first iteration of Algorithm \ref{AlgHeur} took significantly longer and for the  test instances {\tt log1p.E2006.train ($\lambda_c=10^{-5}$), triazines4 ($\lambda_c=10^{-5}$), soccer1 ($\lambda_c=10^{-4},10^{-5}$), soccer2 ($\lambda_c=10^{-4},10^{-5}$)},  either HpgSRN or ZeroFPR was not able to solve the first subproblem whereas  GSSN worked well. For GSSN, the times $t_1$ for solving the first subproblem are often of the same magnitude as the corresponding times in Table \ref{TabGSSNSSNAL} and for the problems {\tt housing7, triazines4, pyrim5}, where GSSN converged in the convex case $q=1$ relatively slowly due to rank deficiency, the first problem could be very efficiently solved in the nonconvex case $q=\frac 12$. The objective function value $\varphi(\ee x1)$ of the first local minimizer is  sometimes significantly larger than in Tables \ref{Tab_q12_LIBSVM_Sol},\ref{Tab_q12_Sparco_Sol} when a good starting point is available, but the heuristic is able to compensate this in many cases. For the test instances {\tt log1p.E2006.test, log1p.E2006.train, space\_ga9 ($\lambda_c=10^{-3}$), bodyfat7 ($\lambda_c=10^{-3}$), mpg7 ($\lambda_c=10^{-3}$), soccer2 ($\lambda_c=10^{-4}$),  blurrycam ($\lambda_c=10^{-4}$), blurspike} we obtained even a better objective function value $\varphi_{\rm opt}$ with the randomly chosen starting point. This shows that the heuristic of Algorithm \ref{AlgHeur}, although very simple, works well for our test problems. It is beyond the scope of this paper to find better methods approximating the global minimizer in the nonconvex case.

Finally we want to mention a remarkable property of our method in the nonconvex case $q=\frac 12$. We observed that the function not only has a lot of local minima but also many stationary points which are locally non-optimal. Let us therefore have a closer look on second-order necessary conditions for a local minimizer. If $\xb$ is a local minimizer, then it is clearly also a local minimizer for the problem
\[\min \frac12 \norm{Ax-b}^2+\mu\sum_{i\in I(\xb)}\sqrt{\vert x_i\vert}\quad\mbox{subject to}\quad x_i=0,\ i\not\in I(\xb),\]
where $I(\xb)=\{i\mv\xb_i\not=0\}$. The objective for this problem is twice continuously differentiable near $\xb$ and therefore the second-order necessary KKT-conditions hold at $\xb$. Thus the matrix $A_{I(\xb)}^TA_{I(\xb)}+\bar W_{I(\xb)}$ must be positive semidefinite, where $A_{I(\xb)}$ denotes the matrix with columns $A_i$, $i\in I(\xb)$ and $\bar W_{I(\xb)}$ is a diagonal matrix with entries $-\frac{\mu}4\vert \bar x_i\vert^{-3/2}$, $i\in I(\xb)$. Whenever the number of nonzeros at the returned solution was small enough, i.e., less than 5000, we tested this condition.  We observed that it was always fulfilled for the solutions returned by GSSN, whereas it failed to hold in rare cases for HpgSRN and ZeroFPR. However, we want to point out that this behaviour of GSSN is due to the high accuracy in our termination criterion \eqref{EqStopping}. We observed that sometimes the iterates produced by GSSN get very close to a non-optimal stationary point, until the CG-method of Algorithm \ref{AlgCG} terminated with a direction of negative curvature and the method could escape from the stationary point along the search direction computed by \eqref{EqTwoDimTRProbl}.

\section{Conclusion}

We presented GSSN, a globalized SCD semismooth$^{*}$ Newton method, which is a hybrid of some variant of a proximal gradient method ensuring global convergence and the SCD semismooth$^{*}$ Newton method yielding fast convergence near the solution. The method can be applied to nonconvex nonsmooth optimization problem, where the nonsmooth part can be rather general. Compared with other methods, GSSN has the advantage that superlinear convergence can be assured under weak assumptions.

\section*{Declarations}

\noindent{\bf Conflict of interest.} The author has no competing interests to declare that are relevant to the content of this article.

\noindent{\bf Data availability. }
The LIBSVM data sets analysed during the current study are publicly available from \url{https://www.csie.ntu.edu.tw/~cjlin/libsvmtools/datasets}. These data sets are collected
from 10-K Corpus \url{http://www.cs.cmu.edu/~ark/10K} and the UCI data repository \url{http://archive.ics.uci.edu/datasets}. The code package of ZeroFPR can be downloaded from \url{https://github.com/kul-optec/ForBES}. The code for SSNAL is online available from \url{https://github.com/MatOpt/SuiteLasso} and also includes  the code for generating the test examples from the Sparco collection.  HpgSRN is available from \url{https://github.com/YuqiaWU/HpgSRN}.

\end{document}